\theoremstyle{plain}
\newtheorem{theorem}{Theorem}[section]
\newtheorem{lemma}[theorem]{Lemma}
\newtheorem{corollary}[theorem]{Corollary}
\newtheorem{proposition}[theorem]{Proposition}
\newtheorem{claim}{Claim}[section]
\newtheorem{definition}{Definition}[section]
\newtheorem{remark}{Remark}[section]
\newtheorem{example}{Example}[section]
\newtheorem{question}{Question}[section]
\def\A{\operatorname{A}}
\def\B{\operatorname{B}}
\def\C{\operatorname{C}}
\def\BC{\operatorname{BC}}
\def\D{\operatorname{D}}
\def\E{\operatorname{E}}
\def\F{\operatorname{F}}
\def\G{\operatorname{G}}
\def\GL{\operatorname{GL}}
\def\SO{\operatorname{SO}}
\def\Sp{\operatorname{Sp}}
\def\SU{\operatorname{SU}}
\def\U{\operatorname{U}}
\def\Ad{\operatorname{Ad}}
\def\Aut{\operatorname{Aut}}
\def\deg{\operatorname{deg}}
\def\det{\operatorname{det}}
\def\diag{\operatorname{diag}}
\def\Fix{\operatorname{Fix}}
\def\Hom{\operatorname{Hom}}
\def\id{\operatorname{id}}
\def\Im{\operatorname{Im}}
\def\Ind{\operatorname{Ind}}
\def\Int{\operatorname{Int}}
\def\ker{\operatorname{ker}}
\def\Lie{\operatorname{Lie}}
\def\max{\operatorname{max}}
\def\sgn{\operatorname{sgn}}
\def\span{\operatorname{span}}
\newcommand{\bbZ}{\mathbb{Z}}
\newcommand{\bbQ}{\mathbb{Q}}
\newcommand{\frg}{\mathfrak{g}}
\newcommand{\frh}{\mathfrak{h}}
\newcommand{\frs}{\mathfrak{s}}
\newcommand{\fru}{\mathfrak{u}}
\begin{document}

\title[On the dimension datum of a subgroup. II.]{On the dimension datum of a subgroup. II.} 

\date{}%\date{March 2010}

\author{Jun Yu}
\address{BICMR, Peking University, No. 5 Yiheyuan Road, Haidian District, Beijing 100871, China.}
\email{junyu@bicmr.pku.edu.cn}

\abstract{This paper studies three aspects around dimension datum: (1), a generalization of the dimension 
datum, which we call the {\it $\tau$-dimension datum}; (2), dimension data of disconnected subgroups; 
(3), compactness of isospectral sets of normal homogeneous spaces. In the first aspect, we find interesting 
examples of pairs $(H_{i},\tau_{i})$ ($i=1,2$) of subgroups and irreducible representations with the same 
$\tau$-dimension datum, and give an application in constructing isospectral hermitian vector bundles. 
We show that $\tau$-dimension data for all linear characters together determine the image of the 
homomorphism from a connected subgroup up to isomorphism. In the second aspect, we express dimension datum 
in terms of {\it characters} supported on {\it maximal commutative connected subsets}, and give formulas 
for these characters using data associated to {\it affine root systems}. In the last aspect, we show that 
any compact semisimple Lie group has only finitely many possible normal homogeneous quotients up to 
diffeomorphism. We also list some open questions about dimension datum and $\tau$-dimension datum.} 

\endabstract

\subjclass[2000]{Primary 22C05. Secondary 58J53.}

\thanks{}

\keywords{Dimension datum, $\tau$-dimension datum, hermitian vector bundle, affine root system, normal 
homogeneous space, Laplace spectrum.}

\maketitle

\tableofcontents

\section{Introduction}

Dimension datum is first seriously studied by Larsen and Pink (\cite{Larsen-Pink}), motivated by its possible 
application in arithmetic geometry (\cite{Katz}, \cite{Larsen-Pink2}). A striking result they showed is that 
any semisimple closed subgroup is determined by its dimension datum up to isomorphism. Recently dimension datum 
catches more attention due to its critical role in Langlands' program of beyond endoscopy (cf. \cite{Langlands} and 
\cite{Arthur}). Motivated by this, we started a systematic study of dimension data of connected closed subgroups 
which are not necessarily semisimple in \cite{An-Yu-Yu}. A particular important finding in \cite{An-Yu-Yu} is a 
family of non-isomorphic connected closed subgroups which have the same dimension datum. This is a new feature 
for dimension data outside the semisimple subgroups case, and it has an interesting application in constructing 
isospectral manifolds. Further in \cite{Yu-dimension}, we classified connected closed subgroups with the same 
dimension datum, and we also classified connected closed subgroups with linearly dimension data in some sense. 
This could be viewed as a nearly complete understanding for dimension data of connected closed subgroups in a 
compact Lie group. In another aspect, topological properties of dimension datum (finiteness, rigidity, Coxeter 
number) are studied in \cite{An-Yu-Yu} and \cite{Larsen}, and stronger results with different proofs are shown 
in \cite{Yu-compactness}.  

The goal of this paper is to extend the study of dimension datum in three aspects. The first is to study a more 
general notion of $\tau$-dimension datum. We show interesting examples of different pairs $(H_{i},\tau_{i})$ 
with the same $\tau$-dimension datum, and use them to construct isospectral hermitian vector bundles. We also 
show that $\tau$-dimension data for all linear characters suffices to determine the image of the homomorphism 
from a connected compact Lie group up to isomorphism. The second is to study dimension data of disconnected 
subgroups. For this we set up a strategy by studying characters associated to affine root systems. The third 
is to further study compactness of isospectral sets of normal homogeneous spaces. In every aspect the study 
is not complete yet, major remaining questions are asked in Questions \ref{Q:tau-dimension}, \ref{Q:FR}, 
\ref{Q:finiteness-normal}. Moreover, in the last section, we propose some other open questions about 
dimension datum and $\tau$-dimension datum. 

\smallskip

\noindent{\it Notation and conventions.} For a compact Lie group $G$, write $G^{0}$ for the neutral subgroup 
containing $e\in G$; write $\mathfrak{g}_{0}$ for the Lie algebra of $G$; write $\mathfrak{g}=
\mathfrak{g}_{0}\otimes_{\mathbb{R}}\mathbb{C}$ for the complefixed Lie algebra of $G$. 

For a connected compact Lie group $G$, write $G^{s}$ (or $G_{der}$) for the derived subgroup, i.e., 
$$G^{s}=G_{der}=[G,G].$$ 

For a compact abelian group $S$, write $$X^{\ast}(S)=\Hom(S,\U(1))$$ for the character group of $S$. In case 
$S$ is connected, i.e., it is a torus, write $$\Lambda_{S}=X^{\ast}(S).$$ It is also called the {\it weight 
lattice} of $S$. Write $$X_{\ast}(S)=\Hom(\U(1),S)$$ for the cocharacter group of a torus $S$. 

%\noindent{\it Acknowledgement.} Jun Yu's research is partially supported by the Recruitment Program of Global 
%Young Experts of China.

\section{$\tau$-dimension datum} 

%\subsection{$\tau$-dimension datum}

Let $G$ be a compact Lie group. For a closed subgroup $H$ of $G$ and an irreducible finite-dimensional 
complex linear representation $\tau$ of $H$, let $\mathscr{D}_{H,\tau}$ denote the map from $\widehat{G}$ 
to $\bbZ$, $$\mathscr{D}_{H,\tau}: \rho\mapsto\dim\Hom_{H}(\tau,\rho|_{H}),$$ where $\widehat{G}$ is the set 
of equivalence classes of irreducible finite-dimensional complex linear representations of $G$ and 
$\Hom_{H}(\tau,\rho|_{H})$ is the space of $H$-equivariant linear maps from $\tau$ to $\rho|_{H}$. We call 
$\mathscr{D}_{H,\tau}$ the {\it $\tau-$dimension datum} of $H$. When $\tau=1$, $\mathscr{D}_{H,\tau}=
\mathscr{D}_{H}$ is the {\it dimension datum} of $H$ (\cite{Larsen-Pink}, \cite{An-Yu-Yu}, \cite{Yu-dimension}). 
In \cite{An-Yu-Yu} and \cite{Yu-dimension}, we have constructed non-isomorphic connected closed subgroups 
with the same dimension datum, e.g, two closed subgroups $H_1,H_2$ in $G=\SU(4n+1)$ with $H_1\cong\U(2n+1)$ 
and $H_2\cong\Sp(n)\times\SO(2n+2)$. In \cite{Yu-dimension}, we made a classification of connected closed 
subgroups with the same dimension datum or with linearly-dependent dimension data. The dimension datum 
problem (\cite{Yu-dimension}) asks: to what extent is \(H\) (up to \(G\)-conjugacy) determined by its 
dimension datum \(\mathscr{D}_H\)? Analogously, we could ask the following question. 

\begin{question}\label{Q:tau-dimension datum}
To what extent is a pair $(H,\tau)$ (up to \(G\)-conjugacy) determined by the $\tau$-dimension datum 
\(\mathscr{D}_{H,\tau}\)? 
\end{question}

It is also interesting to construct non-isomorphic connected closed subgroups $H_1,H_2$ in a compact Lie 
group $G$ and non-trivial irreducible representations $\tau_1\in\widehat{H_1}$ and $\tau_2\in\widehat{H_2}$ 
such that $\mathscr{D}_{H_1,\tau_1}=\mathscr{D}_{H_2,\tau_2}$, and to study linear relations among 
$\tau$-dimension data.  

\smallskip 

In this section we present a family of tuples $(G,H_1,\tau_1,H_2,\tau_2)$ ($\tau_1\neq 1$, $\tau_2\neq 1$) 
such that $\mathscr{D}_{H_1,\tau_1}=\mathscr{D}_{H_2,\tau_2}$, and we give a short remark for what we know 
about Question \ref{Q:tau-dimension datum}. This example has an application in constructing isospectral 
hermitian vector bundles. In the last subsection, we show that the $\tau$-dimension data for all linear 
together determine the image up to isomorphism for a homomorpism from a given connected compact Lie group 
to a connected compact Lie group. 

\subsection{An interesting example}

In $G=\SU(4n+2)$, set $$H_1=\{A,\overline{A}: A\in\U(2n+1)\},$$ $$H_2=\{(A,B): A\in\Sp(2n), B\in\SO(2n+2)\}.$$ 
Then, $H_1\cong\U(2n+1)$, $H_2\cong\Sp(n)\times\SO(2n+2)$. For a sequence of integers $a_1\geq a_2\geq\cdots
\geq a_{2n+1}$ with $a_{i}+a_{2n+2-i}=0$ for any $i$, $1\leq i\leq n$, write $\lambda=(a_1,a_2,\dots,a_{2n+1})$ 
for a weight of $H_1\cong\U(2n+1)$. Write $\lambda_1=(a_1,\dots,a_{n})$ for a weight of $\Sp(2n)$, $\lambda_2=
(a_{1},\dots,a_{n+1})$ for a weight of $\SO(2n+2)$, and $\lambda'=(\lambda_1,\lambda_2)$ for a weight of $H_2$. 
Write $\tau_{\lambda}$ ($\tau_{\lambda'}$) for an irreducible representation of $H_1$ (of $H_2$) with highest 
weight $\lambda$ ($\lambda'$). 

\begin{theorem}\label{T:tau-example}
In the above setting, $\mathscr{D}_{H_1,\tau_{\lambda}}=\mathscr{D}_{H_2,\tau_{\lambda'}}$. 
\end{theorem}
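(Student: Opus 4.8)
The heart of the matter is the identity $\mathscr{D}_{H_1,\tau_\lambda}=\mathscr{D}_{H_2,\tau_{\lambda'}}$, which by definition says that for every $\rho\in\widehat{G}$, $\dim\Hom_{H_1}(\tau_\lambda,\rho|_{H_1})=\dim\Hom_{H_2}(\tau_{\lambda'},\rho|_{H_2})$. Equivalently, writing $\chi_\rho$ for the character of $\rho$, one wants
\[
\int_{H_1}\overline{\chi_{\tau_\lambda}(h)}\,\chi_\rho(h)\,dh=\int_{H_2}\overline{\chi_{\tau_{\lambda'}}(h)}\,\chi_\rho(h)\,dh
\]
for all $\rho$, and since the $\chi_\rho$ span a dense subspace of class functions on $G$, this is precisely the statement that the two pushforward measures $(\iota_1)_*(\overline{\chi_{\tau_\lambda}}\,dh_1)$ and $(\iota_2)_*(\overline{\chi_{\tau_{\lambda'}}}\,dh_2)$ on $G$ (restricted to conjugacy-invariant test functions) coincide. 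The plan is therefore to reduce everything to a torus computation. Fix a maximal torus $T$ of $G=\SU(4n+2)$; by the Weyl integration formula on $G$, it suffices to show that the two measures, each transported to $T$ via the conjugation map and weighted by the Weyl denominator, agree as $W_G$-invariant measures on $T$. Concretely, one embeds a maximal torus of $H_i$ into $T$, computes the pushforward explicitly, and compares.

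**The concrete computation.** First I would make the tori completely explicit. For $H_1=\{\diag(A,\bar A):A\in\U(2n+1)\}$, a maximal torus is the image of the diagonal torus of $\U(2n+1)$, sitting in $T$ as $\diag(t_1,\dots,t_{2n+1},t_1^{-1},\dots,t_{2n+1}^{-1})$ (after the obvious reordering), so that the $4n+2$ eigenvalues pair up as $\{t_i,t_i^{-1}\}$. For $H_2=\Sp(n)\times\SO(2n+2)$, with the standard embedding the $4n+2$ eigenvalues are $\{s_1^{\pm1},\dots,s_n^{\pm1}\}$ (from $\Sp(n)\subset\SU(4n)$... wait, from $\Sp(n)\subset\U(2n)$, contributing $2n$ eigenvalues $s_j^{\pm1}$) together with $\{u_1^{\pm1},\dots,u_{n}^{\pm1},1,1\}$ from $\SO(2n+2)\subset\U(2n+2)$. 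The key point, exactly as in the dimension-datum case treated in \cite{An-Yu-Yu}, is that the generic stabilizer/centralizer structure makes these two subgroups \emph{have the same generic eigenvalue pattern up to the Weyl group of $G$}: a generic element of either $H_i$ is $G$-conjugate to an element whose eigenvalue multiset is closed under $t\mapsto t^{-1}$ with a controlled number of fixed $\pm1$'s. Then one writes $\chi_{\tau_\lambda}$ via the Weyl character formula for $\U(2n+1)$ in the variables $t_i$, writes $\chi_{\tau_{\lambda'}}=\chi_{\lambda_1}^{\Sp}\cdot\chi_{\lambda_2}^{\SO}$ in the variables $s_j,u_k$, and the condition $a_i+a_{2n+2-i}=0$ is precisely what forces the $\U(2n+1)$-character, restricted to the "self-dual" locus $t_i=t_i^{-1}$-symmetric configurations, to factor as a product of a symplectic-type and an orthogonal-type character in matching variables. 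After multiplying by the appropriate Weyl denominators and Jacobian factors coming from the two different integration formulas, the two expressions should become term-by-term equal as $W_G$-invariant functions/distributions on $T$.

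**The main obstacle.** The routine parts are the Weyl-integration-formula bookkeeping and the Weyl-character-formula algebra; these I would not grind through in detail. The genuine difficulty — and the step I expect to absorb most of the work — is the branching/symmetry lemma that converts the palindromic condition $a_i+a_{2n+2-i}=0$ on the $\U(2n+1)$-highest weight into an honest identity between $\GL_{2n+1}$-Schur functions specialized at $(t,t^{-1})$-symmetric points and products $\mathrm{sp}_{\lambda_1}\cdot\mathrm{so}_{\lambda_2}$. This is essentially a classical symmetric-function identity (of the type in the Littlewood/Koike–Terada/Howe-duality circle relating $\GL$ characters on self-conjugate tori to $\Sp\times\SO$ characters), and getting the dictionary between the index conventions $\lambda\mapsto(\lambda_1,\lambda_2)$ exactly right — including the two extra trivial eigenvalues of $\SO(2n+2)$ and the shift between $\U(2n+1)$ (odd) and the even-dimensional symplectic/orthogonal groups — is where sign and indexing errors lurk. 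I would isolate this as a standalone lemma, prove it by manipulating determinantal/bialternant formulas (or by invoking the relevant case of \cite{Yu-dimension}'s branching computations, which already handle the $\tau=1$ specialization $\lambda=0$), and then the theorem follows by combining it with the shared generic-conjugacy structure of $H_1$ and $H_2$ inside $G$.
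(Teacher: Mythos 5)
Your overall strategy is the paper's: reduce via the Weyl integration formula to comparing, on a common maximal torus $T$ of rank $2n+1$, the symmetrizations of (Weyl density)$\times$(character) for the two pairs, and then prove a factorization identity forced by the palindromic condition $a_i+a_{2n+2-i}=0$. You have also correctly located where all the work is. But two things keep this from being a proof. First, a concrete setup error: the $\SO(2n+2)$ factor has a maximal torus of rank $n+1$, with eigenvalues $u_1^{\pm1},\dots,u_{n+1}^{\pm1}$ and no forced $1$'s; your pattern $\{u_1^{\pm1},\dots,u_n^{\pm1},1,1\}$ gives $H_2$ a torus of rank $2n$, which cannot be conjugate to the rank-$(2n+1)$ torus of $H_1\cong\U(2n+1)$, and the weight $\lambda_2=(a_1,\dots,a_{n+1})$ and the root system $\D_{n+1}$ both need that extra coordinate.

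Second, and more importantly, the key lemma you defer is not the classical Littlewood/Koike--Terada type branching identity you describe, and it is precisely the content of the theorem. The functions $\chi_{\tau_\lambda}\cdot F_{\A_{2n}}$ and $\chi_{\tau_{\lambda'}}\cdot F_{\C_n\sqcup\D_{n+1}}$ are \emph{not} equal on $T$; what must be proved is equality only after averaging over the full group $\Gamma^{\circ}=N_G(T)/Z_G(T)=W_{\BC_{2n+1}}$ (not merely over the Weyl groups of $H_1$, $H_2$), i.e., the identity $F_{\A_{2n},\lambda,W_{\BC_{2n+1}}}=F_{\C_n\sqcup\D_{n+1},\lambda',W_{\BC_{2n+1}}}$ of Equation (2.1). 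The paper proves this by transferring these symmetrized characters into the Larsen--Pink polynomial ring $\bbQ[x_0,x_1,\dots]$, realizing them as determinants of explicit matrices $A_{2n+1}(\lambda)$, $C_n(\lambda_1)$, $D'_{n+1}(\lambda_2)$ in the abstract indeterminates $x_k$ (Lemma 2.4), and then observing that the palindromic condition makes $A_{2n+1}(\lambda)$ persymmetric, so that an explicit conjugation by $\frac{1}{2}\left(\begin{smallmatrix}I& &L_m\\ &\sqrt2&\\ -L_m& &I\end{smallmatrix}\right)$ block-diagonalizes it into the $C$- and $D'$-blocks, yielding $a_{2n+1}(\lambda)=c_n(\lambda_1)d_{n+1}(\lambda_2)$ (Proposition 2.7). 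Your suggestion to "invoke the $\tau=1$ case from \cite{Yu-dimension}" only covers $\lambda=0$; the extension to general palindromic $\lambda$ is the new determinantal computation, and a generic symmetric-function identity off the shelf will not supply it. So the plan is sound, but the proposal as written asserts the theorem's essential step rather than proving it.
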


Note that, when $a_1=a_2=\cdots=a_{2n+1}=0$, Theorem \ref{T:tau-example} is just \cite[Theorem 1.5(1)]{An-Yu-Yu}. 

\medskip 

The proof of Theorem \ref{T:tau-example} consists of three steps. 

\noindent{\it Step 1, root systems, characters, and dimension datum.} 

Let $T$ be a closed torus in $G$. Write $\Lambda_{T}$ for the weight lattice of $T$. Write $$\Gamma^{\circ}
=N_{G}(T)/Z_{G}(T).$$ Choose a biinvariant Riemannian metric on $G$. Restricting to $T$ it gives a positive 
definite inner product on the Lie algebra of $T$, hence also induces a positive definite inner product on 
$\Lambda_{T}$, both are $\Gamma^{\circ}$ invariant. Let $(\cdot,\cdot)$ denote the induced inner product on 
$\Lambda_{T}$. As in \cite[Definition 2.2]{Yu-dimension}, a root system in $\Lambda_{T}$ is a finite subset 
$\Phi$ of $\Lambda_{T}$ satisfying the following conditions,  
\begin{enumerate}
\item For any two roots $\alpha\in\Phi$ and $\beta\in\Phi$, the element $\beta-\frac{2(\beta,\alpha)}
{(\alpha,\alpha)}\alpha\in\Phi$.
\item (\textbf{Strong integrality}) For any root $\alpha$ and any vector $\lambda\in\Lambda_{T}$, the 
number $\frac{2(\lambda,\alpha)}{(\alpha,\alpha)}$ is an integer.
\end{enumerate}

Let $\Psi'_{T}$ be the root system in $\Lambda_{T}$ generated by root systems $\Phi(H,T)$ where $H$ run 
over connected closed subgroups of $G$ with $T$ a maximal torus of $H$. It is clear that $\Psi'_{T}$ is 
$\Gamma^{\circ}$ stable. By \cite[Corollary 3.4]{Yu-dimension}, $\Psi'_{T}$ is the union of root systems 
$\Phi(H,T)$ where $H$ run over connected closed subgroups of $G$ with $T$ a maximal torus of $H$. Hence, 
$W_{\Psi'_{T}}\subset\Gamma^{\circ}$. Choose (and fix) a system of positive roots $\Psi_{T}^{'+}$. 

Analogous to \cite[Definition 3.5]{Yu-dimension}, we introduce some characters. For a root system $\Phi$ 
in $\Lambda_{T}$, set \[\delta_{\Phi}=\frac{1}{2}\sum_{\alpha\in\Phi\cap\Psi_{T}^{'+}}\alpha.\] For a 
root system $\Phi$ in $\Lambda_{T}$ and a weight $\lambda\in\Lambda_{T}$ which is dominant and integral 
for $\Phi$, set \[A_{\Phi,\lambda}=\sum_{w\in  W_{\Phi}}\sgn(w)[\lambda+\delta_{\Phi}-w\delta_{\Phi}]
\in\bbQ[\Lambda_{T}].\] For a finite group $W$ between $W_{\Phi}$ and $\Gamma^{\circ}$, set
\[F_{\Phi,\lambda,W}=\frac{1}{|W|}\sum_{\gamma\in W}\gamma(A_{\Phi,\lambda})\in\bbQ[\Lambda_{T}].\]
For a weight $\lambda\in\Lambda_{T}$ and a finite subgroup $W$ of $\Gamma^{\circ}$, set
\[\chi^{\ast}_{\lambda,W}=\frac{1}{|W|}\sum_{\gamma\in W}[\gamma\lambda]\in\bbQ[\Lambda_{T}].\]

\begin{proposition}\label{P:dimension-character}
If $\mathscr{D}_{H_1,\tau_1}=\mathscr{D}_{H_2,\tau_2}$ for two connected closed subgroups $H_1,H_2$ and 
irreducible representations $\tau_1\in\widehat{H_1}$ and $\tau_{2}\in\widehat{H_2}$, then $H_1$ and $H_2$ 
have conjugate maximal tori. 

Assume that $T$ is a maximal torus of both $H_1$ and $H_2$, write $\Phi_{i}\subset\Lambda_{T}$ for the 
root system of $H_{i}$ ($i=1,2$). Then, $\mathscr{D}_{H_1,\tau_1}=\mathscr{D}_{H_2,\tau_2}$ if and only 
if $$F_{\Phi_1,\lambda_1,\Gamma^{\circ}}=F_{\Phi_2,\lambda_2,\Gamma^{\circ}},$$ where $\lambda_{i}\in
\Lambda_{T}$ is highest weight of $\tau_{i}$ ($i=1,2$). 
\end{proposition}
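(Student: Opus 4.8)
The plan is to pass from $\mathscr{D}_{H,\tau}$ to the conjugation-invariant distribution it represents on $G$, read off a maximal torus of $H$ from the support of the corresponding distribution on a maximal torus $T_G$ of $G$, and then compute the germ of that distribution along the torus by Weyl's integration and character formulas. One uses throughout that $\mathscr{D}_{H,\tau}(\rho)=\dim\Hom_H(\tau,\rho|_H)=\int_H\overline{\chi_\tau(h)}\,\chi_\rho(h)\,dh$ and that the $\chi_\rho$ ($\rho\in\wh G$) span a dense subspace of the continuous class functions on $G$; hence $\mathscr{D}_{H,\tau}$ is exactly the data of the conjugation-invariant distribution $\psi_{H,\tau}$ on $G$ with $\langle\psi_{H,\tau},\chi_\rho\rangle=\mathscr{D}_{H,\tau}(\rho)$ for all $\rho$, namely the average over $G$-conjugation of the measure $\chi_\tau\,d\mu_H$, and $\mathscr{D}_{H_1,\tau_1}=\mathscr{D}_{H_2,\tau_2}$ if and only if $\psi_{H_1,\tau_1}=\psi_{H_2,\tau_2}$. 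By Weyl's integration formula for $G$, $\psi_{H,\tau}$ corresponds to a $W_G$-invariant distribution $\varphi_{H,\tau}$ on $T_G$ with $\langle\psi_{H,\tau},\chi_\rho\rangle=\frac1{|W_G|}\int_{T_G}\overline{\varphi_{H,\tau}(t)}\,\chi_\rho(t)\,|\Delta_G(t)|^2\,dt$, and since the $\chi_\rho|_{T_G}$ span $\bbQ[\Lambda_{T_G}]^{W_G}$, the datum $\mathscr{D}_{H,\tau}$ is equivalent to $\varphi_{H,\tau}$ (away from the $G$-singular locus, which is negligible).

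\emph{Maximal torus.} Writing $S_H\subseteq T_G$ for a maximal torus of $H$, the distribution $\varphi_{H,\tau}$ is supported exactly on $\bigcup_{w\in W_G}wS_Hw^{-1}$: its support lies in $T_G\cap\bigcup_g gHg^{-1}$, whose intersection with the $G$-regular set is $\bigcup_w wS_Hw^{-1}$ intersected with that set (a regular semisimple element of $T_G$ lies in $H$ exactly when it is $G$-conjugate into a maximal torus of $H$), and there $\varphi_{H,\tau}$ is nonzero on a dense set because $\overline{\chi_\tau}|\Delta_H|^2$ is a nonzero element of $\bbQ[\Lambda_{S_H}]$. Hence $\mathscr{D}_{H_1,\tau_1}=\mathscr{D}_{H_2,\tau_2}$ forces $\bigcup_w wS_{H_1}w^{-1}=\bigcup_w wS_{H_2}w^{-1}$; as a torus is not a finite union of proper closed subgroups, $S_{H_1}$ lies in a single $W_G$-translate of $S_{H_2}$ and conversely, so by dimension $S_{H_1}$ and $S_{H_2}$ are conjugate in $G$. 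This is the first assertion; it does not involve $\tau$ and is the argument already used for $\mathscr{D}_H$ in \cite{An-Yu-Yu} and \cite{Yu-dimension}.

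\emph{Identifying $F$.} Now take $T$ to be a common maximal torus of $H_1,H_2$, set $\Phi_i=\Phi(H_i,T)$ (so $W_{\Phi_i}\subseteq W_{\Psi'_T}\subseteq\Gamma^\circ$) and let $\lambda_i$ be the highest weight of $\tau_i$. Weyl integration on $H_i$ gives
\[\mathscr{D}_{H_i,\tau_i}(\rho)=\frac1{|W_{\Phi_i}|}\int_T\overline{\chi_{\tau_i}(t)}\,\chi_\rho(t)\,|\Delta_{H_i}(t)|^2\,dt ,\]
and, writing $\Delta_{H_i}=\sum_{w\in W_{\Phi_i}}\sgn(w)[w\delta_{\Phi_i}]$, the Weyl character formula gives $\chi_{\tau_i}\Delta_{H_i}=\sum_{w\in W_{\Phi_i}}\sgn(w)[w(\lambda_i+\delta_{\Phi_i})]$; reindexing the resulting double sum (replace the summation index of the second factor by its product with that of the first) yields
\[\overline{\chi_{\tau_i}}\,|\Delta_{H_i}|^2=\overline{\chi_{\tau_i}\Delta_{H_i}}\cdot\Delta_{H_i}=\sum_{w\in W_{\Phi_i}}w\bigl(\overline{A_{\Phi_i,\lambda_i}}\bigr)=|W_{\Phi_i}|\,\overline{F_{\Phi_i,\lambda_i,W_{\Phi_i}}}.\]
Hence $\mathscr{D}_{H_i,\tau_i}(\rho)=\langle F_{\Phi_i,\lambda_i,W_{\Phi_i}},\chi_\rho|_T\rangle_T$; and since $\chi_\rho|_T$ is $\Gamma^\circ$-invariant (as $N_G(T)$ permutes the $T$-weights of $\rho$ while $Z_G(T)$ fixes $T$) we may replace $F_{\Phi_i,\lambda_i,W_{\Phi_i}}$ by its $\Gamma^\circ$-average, which is $F_{\Phi_i,\lambda_i,\Gamma^\circ}$, so that $\mathscr{D}_{H_i,\tau_i}(\rho)=\langle F_{\Phi_i,\lambda_i,\Gamma^\circ},\chi_\rho|_T\rangle_T$ for all $\rho\in\wh G$. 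The implication $F_{\Phi_1,\lambda_1,\Gamma^\circ}=F_{\Phi_2,\lambda_2,\Gamma^\circ}\Rightarrow\mathscr{D}_{H_1,\tau_1}=\mathscr{D}_{H_2,\tau_2}$ is now immediate (this is the direction used in the proof of Theorem \ref{T:tau-example}). For the converse: if $\mathscr{D}_{H_1,\tau_1}=\mathscr{D}_{H_2,\tau_2}$ then $\varphi_{H_1,\tau_1}=\varphi_{H_2,\tau_2}$ by the first paragraph; near a point of $T$ regular in $G$ and in $H_i$, this distribution is the honest smooth density produced by Weyl integration, equal to $c\,\chi_{\tau_i}|\Delta_{H_i}|^2/|\Delta_G|^2$ with $c$ depending only on $G$ and $T$ (not on $H_i$); thus $\chi_{\tau_1}|\Delta_{H_1}|^2$ and $\chi_{\tau_2}|\Delta_{H_2}|^2$ agree on a dense subset of $T$, hence as elements of $\bbQ[\Lambda_T]$, i.e. $F_{\Phi_1,\lambda_1,W_{\Phi_1}}=F_{\Phi_2,\lambda_2,W_{\Phi_2}}$, and averaging over $\Gamma^\circ$ gives $F_{\Phi_1,\lambda_1,\Gamma^\circ}=F_{\Phi_2,\lambda_2,\Gamma^\circ}$.

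The step I expect to be the main obstacle is the converse just above: one must verify that $\psi_{H,\tau}$ (equivalently $\varphi_{H,\tau}$) is, near generic points of $T$, genuinely a function times Haar measure — so that "$\mathscr{D}_{H,\tau}$ determines $F_{\Phi,\lambda,\Gamma^\circ}$" is not merely the weaker statement that the functionals $\langle F_{\Phi,\lambda,\Gamma^\circ},\chi_\rho|_T\rangle$ coincide — and that the Weyl-integration bookkeeping produces the normalising constant $c$ uniformly in $H$. Everything else is the routine, if notation-heavy, manipulation of Weyl's formulas already carried out for $\tau=\mathbf 1$ in \cite{Yu-dimension}.
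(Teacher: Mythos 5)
Your overall route is the same as the paper's: encode $\mathscr{D}_{H,\tau}$ as the $G$-conjugation average of the complex measure $\overline{\chi_\tau}\,d\mu_H$, read the maximal torus off the support of the resulting object on a maximal torus of $G$, and convert $\overline{\chi_\tau}\,|\Delta_H|^2$ into $F_{\Phi,\lambda,W_\Phi}$ via the Weyl character formula; the displayed identity in your second step is exactly the paper's ``new calculation'' $\chi_\lambda\prod_{\alpha\in\Phi}(1-[\alpha])=|W_\Phi|F_{\Phi,\lambda,W_\Phi}$ (up to complex conjugation), the rest being the argument of Propositions 3.7--3.8 of \cite{Yu-dimension} to which the paper defers.

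There is, however, one step in your converse direction that fails. You claim that near a generic point of $T$ the distribution $\varphi_{H_i,\tau_i}$ has density equal to the \emph{single} term $c\,\chi_{\tau_i}|\Delta_{H_i}|^2/|\Delta_G|^2$, and you deduce the un-averaged equality $F_{\Phi_1,\lambda_1,W_{\Phi_1}}=F_{\Phi_2,\lambda_2,W_{\Phi_2}}$ before taking $\Gamma^{\circ}$-averages. That intermediate statement is false in general: take $H_2=nH_1n^{-1}$ with $n\in N_G(T)$ acting on $\Lambda_T$ outside $\Aut(\Phi_1)$ --- for instance two subgroups of $\G_2$ with root systems $\{\pm\alpha\}$ and $\{\pm\beta\}$ for non-proportional long roots $\alpha,\beta$, which are conjugate and share the maximal torus $T$. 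Their dimension data coincide trivially, yet $|\Delta_{H_1}|^2=2[0]-[\alpha]-[-\alpha]\neq 2[0]-[\beta]-[-\beta]=|\Delta_{H_2}|^2$ in $\bbQ[\Lambda_T]$. The source of the error is that the $W_G$-symmetrized distribution, localized at a generic $t_0\in T$, receives a contribution from \emph{every} $w\in W_G$ stabilizing $T$ as a subset (such $w$ move $t_0$ within $T$ even when $t_0$ is $G$-regular), and the image of this stabilizer in $\Aut(T)$ is precisely $\Gamma^{\circ}=N_G(T)/Z_G(T)$. The honest local density is therefore already the $\Gamma^{\circ}$-average, proportional to $F_{\Phi_i,\lambda_i,\Gamma^{\circ}}$, and equality of the distributions yields directly --- and only --- the equality of these averages, which is exactly what the proposition asserts. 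The repair is local and does not disturb the rest of your argument. A smaller point in the same vein: in your support argument the nonvanishing must also survive this averaging; since $\overline{\chi_\tau}\,d\mu_H$ is a complex measure (unlike the positive Sato--Tate measure of the $\tau=1$ case), cancellation is conceivable and is ruled out by noting that the terms $[\gamma\lambda]$ ($\gamma\in\Gamma^{\circ}$) are the unique shortest terms of $F_{\Phi,\lambda,\Gamma^{\circ}}$ and occur with positive coefficients.
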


\begin{proof}
This is analogous to Propositions 3.7 and 3.8 in \cite{Yu-dimension}. Based on the proof given there, 
the only new adding is calalation for $F_{\Phi}(t)\chi_{\lambda}(t)$, where $H$ is a connected closed 
subgroup of $G$ with $T$ a maximal torus of $H$, $\Phi\subset\Lambda_{T}$ is root system of $H$, 
$F_{\Phi}$ is Weyl product of $H$, and $\chi_{\lambda}$ is the character of an irreducible representation 
of $H$ with highest wight $\lambda$. The calculation goes as follows, \begin{eqnarray*}&& |W_{\Phi}|F_{\Phi}(t)
\chi_{\lambda}(t)=\chi_{\lambda}\prod_{\alpha\in\Phi}\big(1-[\alpha]\big)\\&=&\prod_{\alpha\in\Phi^{+}}
\big([\frac{-\alpha}{2}]-[\frac{\alpha}{2}]\big)(\chi_{\lambda}\prod_{\alpha\in\Phi^{+}}\big([\frac{\alpha}{2}]
-[\frac{-\alpha}{2}]\big))\\&=&\big(\sum_{w\in W_{\Phi}}\sgn(w)[-w\delta]\big)\big(\sum_{\tau\in W_{\Phi}}
\sgn(\tau)[\tau(\lambda+\delta)]\big)\\&=&\sum_{w,\tau\in W}\sgn(w)\sgn(\tau)[-w\delta+\tau(\lambda+\delta)]
\\&=_{w\mapsto\tau w}&\sum_{\tau\in W_{\Phi}}\tau\big(\sum_{w\in W_{\Phi}}\sgn(w)[\lambda+\delta-w\delta]\big)
\\&=&|W_{\Phi}|F_{\Phi,\lambda,W_{\Phi}}.\end{eqnarray*} In the above $\delta=\delta_{\Phi}$. Due to $W_{\Phi}
\subset\Gamma^{0}$, we also note that $$\frac{1}{|\Gamma^{0}|}\sum_{\gamma\in\Gamma^{0}}\gamma\cdot 
F_{\Phi,\lambda,W_{\Phi}}=F_{\Phi,\lambda,\Gamma^{\circ}}.$$
\end{proof}

In the situation of Theorem \ref{T:tau-example}, write $$T=\{\diag\{a_1,\dots,a_{2n+1},a_1^{-1},\dots,
a_{2n+1}^{-1}\}: |a_1|=\cdots=|a_{2n+1}|=1\}.$$ Then, $T$ is conjugate to maximal tori of $H_1$ and $H_2$. By 
calculation we see that $$\Psi'_{T}=\BC_{2n+1}$$ and $$\Gamma^{\circ}=W_{\BC_{2n+1}}=\{\pm{1}\}^{2n+1}\rtimes 
S_{2n+1}.$$ Substituting $H_2$ by a conjugate subgroup $H'_2$, we may assume that $T$ is a maximal torus of 
both $H_1$ and $H'_2$, and idendity the root system of $H_1$ (of $H'_2$) with the sub-root system $\A_{2n}$ 
($\C_{n}\sqcup\D_{n+1}$) of $\Psi'_{T}=\BC_{2n+1}$. By Proposition \ref{P:dimension-character}, the conclusion 
of Theorem \ref{T:tau-example} is equivalent to \begin{equation}\label{Eq:A=CD}F_{\A_{2n},\lambda,W_{\BC_{2n+1}}}
=F_{\C_{n}\sqcup\D_{n+1},\lambda',W_{\BC_{2n+1}}}.\end{equation}

\medskip 
 
\noindent{\it Step 2, sub-root systems of $\Psi=\BC_{n}$, polynomials $a_{n}(\lambda)$, $b_{n}(\lambda)$, 
$c_{n}(\lambda)$, $d_{n}(\lambda)$ and their multiplicative relations.}  

There is a good idea in \cite{Larsen-Pink} which transfers characters $F_{\Phi,0,W_{\BC_{n}}}$ into 
polynomials. In \cite{An-Yu-Yu} and \cite{Yu-dimension}, we further find matrix expression for the resulting 
polynimials. Here, we extend these to the characters $F_{\Phi,\lambda,W_{\BC_{n}}}$. 

Following \cite[Section 7]{Yu-dimension}, we briefly recall the idea of \cite{Larsen-Pink} which identifies 
the direct limit of character groups with polynomial ring. Set \begin{eqnarray*}&& \bbZ^{n}:=\bbZ\BC_{n}=
\Lambda_{\BC_{n}}=\span_{\bbZ}\{e_1,e_2,...,e_{n}\},\\&& W_{n}:=\Aut(\BC_{n})=W_{\BC_{n}}=
\{\pm{1}\}^{n}\rtimes S_{n},\\&&\bbZ_{n}:=\bbQ[\bbZ^{n}],\\&& Y_{n}:=\bbZ_{n}^{W_{n}}. \end{eqnarray*} For 
$m\leq n$, the injection\[\bbZ^{m}\hookrightarrow\bbZ^{n}:(a_1,...,a_{m})\mapsto(a_1,...,,a_{m},0,...,0)\] 
extends to an injection $i_{m,n}:\bbZ_{m}\hookrightarrow\bbZ_{n}$. Define $\phi_{m,n}:\bbZ_{m}\rightarrow
\bbZ_{n}$ by \[\phi_{m,n}(z)=\frac{1}{|W_{n}|}\sum_{w\in W_{n}}w(i_{m,n}(z)).\] Thus $\phi_{m,n}\phi_{k,m}=
\phi_{k,n}$ for any $k\leq m\leq n$ and the image of $\phi_{m,n}$ lies in $Y_n$. Hence $\{Y_{m}:\phi_{m,n}\}$ 
forms a direct system and we define \[Y=\lim_{\longrightarrow_{n}} Y_{n}.\] Define the map $j_{n}:\bbZ_{n}
\rightarrow Y$ by composing $\phi_{n,p}$ with the injection $Y_{p}\hookrightarrow Y$. The isomorphism
$\bbZ^{m}\oplus\bbZ^{n}\longrightarrow\bbZ^{m+n}$ gives a canonical isomorphism $M: \bbZ_{m}\otimes_{\bbQ}
\bbZ_{n}\longrightarrow\bbZ_{m+n}$. Given two elements of $Y$ represented by $y\in Y_{m}$ and $y'\in Y_{n}$ 
we define \[yy'=j_{m+n}(M(y\otimes y')).\] This product is independent of the choice of $m$ and $n$ and 
makes $Y$ a commutative associative algebra.

The monomials $[e_1]^{k_1}\cdots[e_{n}]^{k_{n}}$ ($k_1,k_2,\cdots,k_{n}\in\bbZ$) form a $\bbQ$ basis of
$\bbZ_{n}$, where $[e_i]^{k_i}=[k_ie_i]\in\bbZ_1$ is a linear character. Hence $Y$ has a $\bbQ$ basis
\[e(k_1,k_2,...,k_{n})=j_{n}([e_1]^{k_1}\cdots[e_{n}]^{k_{n}})\] indexed by $n\geq 0$ and
$k_1\geq k_2\geq\cdots\geq k_{n}\geq 0$. Mapping $e(k_1,k_2,...,k_{n})$ to $x_{k_1}x_{k_2}\cdots x_{k_{n}}$,
we get a $\bbQ$ linear map \[E: Y\longrightarrow \bbQ[x_0,x_1,...,x_{n},...].\] This map $E$ is an algebra
isomorphism. Here $x_0=1$ and write $x_{0}$ for notational convenience. For any $k_1\geq k_2\geq\cdots
\geq k_{n}\geq 0$ and $\lambda=k_1e_1+k_2e_2+\cdots+k_{n}e_{n}$, $$\chi^{\ast}_{\lambda,W_{n}}=
e(k_1,k_2,\dots,k_{n})\in Y$$ and $$E(\chi^{\ast}_{\lambda,W_{n}})=x_{k_1}x_{k_2}\cdots x_{k_{n}}.$$ 
Given $f\in\bbQ[x_0,x_1,...]$, set \[\sigma(f)(x_0,x_1,...,x_{2n},x_{2n+1},...)=f(x_0,-x_1,...,x_{2n},
-x_{2n+1},...).\] Then, $\sigma$ is an involutive automorphism of $\bbQ[x_0,x_1,...]$. 

Write $a_{n}(\lambda)$, $b_{n}(\lambda)$, $c_{n}(\lambda)$, $d_{n}(\lambda)$ for the image of $j_{n}(F_{\Phi,
\lambda,W_{n}})$ under $E$ for $\Phi=\A_{n-1}$, $\B_{n}$, $\C_{n}$ or $\D_{n}$, and a weight $\lambda\in
\mathbb{Z}^{n}$. Observe that $a_{n}(\lambda)$, $b_{n}(\lambda)$, $c_{n}(\lambda)$, $d_{n}(\lambda)$ are homogeneous 
polynomials of degree $n$ with integer coefficients. Write $b'_{n}(\lambda)=\sigma(b_{n}(\lambda))$. 

\smallskip

For $n\geq 1$ and a weight $\lambda=a_1e_1+\cdots+a_{n}e_{n}$, define the matrices \begin{eqnarray*}&& 
\qquad\qquad\qquad\qquad\qquad A_{n}(\lambda)=(x_{|a_{j}+i-j|})_{n\times n},\\&& B_{n}(\lambda)\!=\!
(x_{|a_{j}+i-j|}\!-\!x_{|a_{j}+2n+1-i-j|})_{n\times n},\ B'_{n}(\lambda)\!=\!(x_{|a_{j}+i-j|}\!+\!x_{|a_{j}+
2n+1-i-j|})_{n\times n},\\&& C_{n}(\lambda)\!=\!(x_{|a_{j}+i-j|}\!-\!x_{|a_{j}+2n+2-i-j|})_{n\times n},\ 
D_{n}(\lambda)\!=\!(x_{|a_{j}+i-j|}\!+\!x_{|a_{j}+2n-i-j|})_{n\times n},\\&& \qquad\qquad\qquad
\qquad\qquad D'_{n}(\lambda)=(y_{i,j})_{n\times n},\end{eqnarray*} where $y_{i,j}=x_{|a_{j}+i-j|}\!+
\!x_{|a_{j}+2n-i-j|}$ if $i,j\leq n-1$, $y_{n,j}=\sqrt{2}x_{|a_{j}+n-j|}$, $y_{i,n}=
\frac{\sqrt{2}}{2}(x_{|a_{n}+i-n|}+x_{|a_{n}+n-i|})$ and $y_{n,n}=x_{|a_{n}|}$.

\begin{lemma}\label{L:polynomial-matrix}
We have \[\det A_{n}(\lambda)=a_{n}(\lambda),\quad \det B_{n}(\lambda)=b_{n}(\lambda),\quad
\det B'_{n}(\lambda)=b'_{n}(\lambda),\] \[\det C_{n}(\lambda)=c_{n}(\lambda),\quad
\frac{1}{2}\det D_{n}(\lambda)=\det D'_{n}(\lambda)=d_{n}(\lambda).\]
\end{lemma}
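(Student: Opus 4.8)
The plan is to prove each of the five determinant identities by the same mechanism: recognize the alternating sum defining $F_{\Phi,\lambda,W_\Phi}$ as a ratio of determinants (a Weyl-character-type formula), push it through the isomorphism $E\colon Y\to\bbQ[x_0,x_1,\dots]$, and identify the result with the stated $n\times n$ determinant. Concretely, recall from the computation in the proof of Proposition \ref{P:dimension-character} that $|W_\Phi|\,F_\Phi(t)\chi_\lambda(t)=|W_\Phi|\,F_{\Phi,\lambda,W_\Phi}$, so $F_{\Phi,\lambda,W_\Phi}=\sum_{w\in W_\Phi}\sgn(w)[\lambda+\delta_\Phi-w\delta_\Phi]$. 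For $\Phi=\A_{n-1}$, $W_\Phi=S_n$ acts by permuting coordinates, and $\lambda+\delta_{\A_{n-1}}$ has entries $a_j+(n-j)$ up to an overall shift; the alternating sum over $S_n$ is then the standard bialternant $\det\big([(a_j+n-j)e_{\sigma}]\big)$-type object, and after applying $j_n$ and $E$ — which sends $[ke_i]$-monomials to products $x_{|k|}$ and symmetrizes via $\phi_{n,p}$ — the permutation sum collapses exactly to $\det(x_{|a_j+i-j|})_{n\times n}$. This gives $\det A_n(\lambda)=a_n(\lambda)$.

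For the remaining four cases I would run the analogous Weyl-denominator bookkeeping for the signed lattices $\B_n$, $\C_n$, $\D_n$, whose Weyl groups are $\{\pm1\}^n\rtimes S_n$ (index-two subgroup for $\D_n$). The key combinatorial fact is that for a hyperoctahedral Weyl group the alternating sum $\sum_w \sgn(w)[\lambda+\delta-w\delta]$ factors, after symmetrization under the \emph{full} $W_n=W_{\BC_n}$ and transport through $E$, into a determinant whose $(i,j)$ entry is the $\pm$-symmetrized version of the $\A$-entry: one replaces $x_{|a_j+i-j|}$ by $x_{|a_j+i-j|}\mp x_{|a_j+c-i-j|}$, where the shift constant $c$ is read off from $2\delta_\Phi$ (namely $c=2n-1$ for $\B_n$, $2n-2$ for $\D_n$, $2n$ for $\C_n$ — matching $2\rho$ in each type), and the sign is $-$ for the short/long-root cases producing $B_n,C_n$ and $+$ for the "dual" variants $B'_n,D_n$. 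Here I would lean on \cite{Larsen-Pink} and \cite[Section 7]{Yu-dimension} for the $\lambda=0$ prototypes $b_n,c_n,d_n$ and check that inserting a dominant weight $\lambda$ merely shifts the index $i-j\mapsto a_j+i-j$ inside every $x_\bullet$, which is exactly how $A_n(\lambda),B_n(\lambda),\dots$ are defined. The identity $b'_n(\lambda)=\sigma(b_n(\lambda))$ and $\det B'_n(\lambda)=\sigma(\det B_n(\lambda))$ then follow because $\sigma$ flips the sign of every $x$ with odd index, and in the cofactor expansion of $\det B_n(\lambda)$ each term is a product of $n$ entries whose index-parities add up to a fixed parity determined by $\lambda$ and $n$, so $\sigma$ acts on the whole determinant by a global sign that one matches against the definition of $B'_n$.

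The $\D_n$ case needs the extra argument that the half-sum over the index-two subgroup splits as $\tfrac12(\text{sum over }\{\pm1\}^n\rtimes S_n \pm \text{the "difference" piece})$; concretely $d_n(\lambda)=\tfrac12\det D_n(\lambda)$ comes from writing the $\D_n$ alternating sum as the average of the $B_n$-type "$+$" determinant and its reflection, while the genuinely $n\times n$ form $\det D'_n(\lambda)=d_n(\lambda)$ (no factor $\tfrac12$) is obtained by a column/row operation that absorbs the factor $2$: one rescales the last row and column by $\sqrt2$ and $\tfrac{\sqrt2}{2}$ and replaces the $(n,n)$ entry accordingly, which is precisely the definition of $D'_n(\lambda)$, and then $\det D'_n(\lambda)=\tfrac{\sqrt2}{1}\cdot\tfrac{\sqrt2}{2}\cdot(\text{something})=\tfrac12\det D_n(\lambda)$ after accounting for the doubled middle term $x_{|a_n|}$ versus $x_{|a_n|}+x_{|a_n|}$. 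I expect the main obstacle to be exactly this $\D_n$ normalization — tracking the two factors of $\sqrt2$ and the coincidence $x_{|a_n+n-n|}=x_{|a_n|}$ that makes the last diagonal entry behave differently from the others — together with pinning down the shift constants $c$ and signs uniformly; the $\A$, $\B$, $\C$ cases are routine bialternant manipulations once the $\lambda=0$ versions in \cite{Larsen-Pink} and \cite{Yu-dimension} are in hand, but $\D_n$ has genuinely different Weyl-group combinatorics and deserves the most care.
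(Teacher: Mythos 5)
Your overall strategy is the one the paper uses: match the terms $E(j_{n}([\lambda+\delta_{\Phi}-w\delta_{\Phi}]))$, $w\in W_{\Phi}$, one by one against the monomials in the expansion of the stated determinant for $\Phi=\A_{n-1},\B_{n},\C_{n}$; obtain the $B'_{n}$ identity by applying the involution $\sigma$; and for $\D_{n}$ extend the alternating sum from the index-two subgroup $W_{\D_{n}}$ to all of $W_{n}$ (the paper does this by extending $\sgn|_{W_{\D_{n}}}$ to a character $\epsilon'$ of $W_{n}$ with $\epsilon'(s_{e_1})=1$ and using $s_{e_{n}}\delta_{\D_{n}}=\delta_{\D_{n}}$ --- this is exactly your ``average over the two cosets'' step), then pass from $D_{n}$ to $D'_{n}$ by rescaling the last row and column. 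So the architecture is right. However, three of your concrete steps do not survive scrutiny.

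Most seriously, the ``fixed parity'' claim in the $B'_{n}$ step is false, and with it your derivation of $\det B'_{n}(\lambda)=\sigma(\det B_{n}(\lambda))$. Already for $n=1$, $\lambda=0$ one has $\det B_{1}(0)=x_{0}-x_{1}$, whose two monomials have opposite total index parity; so $\sigma$ does \emph{not} act on $\det B_{n}(\lambda)$ by a global sign (if it did, you would conclude $b'_{n}=\pm b_{n}$, which is not what the lemma asserts). The working argument is entrywise: the two indices $a_{j}+i-j$ and $a_{j}+2n+1-i-j$ in the $(i,j)$ entry of $B_{n}(\lambda)$ differ by the odd number $2n+1-2i$, hence $\sigma$ sends that entry to $(-1)^{a_{j}+i+j}$ times the corresponding entry of $B'_{n}(\lambda)$; these signs factor into row factors $(-1)^{i}$ and column factors $(-1)^{a_{j}+j}$, whose total product is $(-1)^{\sum_{j}a_{j}}$, so $\sigma(\det B_{n}(\lambda))=(-1)^{|\lambda|}\det B'_{n}(\lambda)$. (This also shows the identity needs the sign $(-1)^{|\lambda|}$ in general --- a point the paper's one-line proof glosses over as well.) Second, your shift constants are each off by $2$: the second index in position $(i,j)$ is $a_{j}+\delta_{i}+\delta_{j}$, i.e.\ $a_{j}+2n+1-i-j$ for $\B_{n}$, $a_{j}+2n+2-i-j$ for $\C_{n}$, $a_{j}+2n-i-j$ for $\D_{n}$, matching the matrices as defined; your $c=2n-1,\,2n,\,2n-2$ come from reading off $2\rho_{1}$ while forgetting that the entry already subtracts $i+j\geq 2$, so your verification would not reproduce the stated matrices. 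Third, the $D'_{n}$ bookkeeping: $\sqrt{2}\cdot\tfrac{\sqrt{2}}{2}=1$, not $\tfrac12$; the correct statement is that $D'_{n}(\lambda)$ is obtained from $D_{n}(\lambda)$ by multiplying \emph{both} the last row and the last column by $\tfrac{1}{\sqrt{2}}$ (note the $(n,j)$ and $(i,n)$ entries of $D_{n}$ are $2x_{|a_{j}+n-j|}$ and $x_{|a_{n}+i-n|}+x_{|a_{n}+n-i|}$, and the $(n,n)$ entry is $2x_{|a_{n}|}$), whence $\det D'_{n}(\lambda)=\tfrac12\det D_{n}(\lambda)$. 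All three defects are repairable without changing your plan, but as written each of these steps fails.
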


\begin{proof}
For $\Phi=A_{n-1}$, $\B_{n}$, $\C_{n}$, these follow from comparing $E(j_{n}([\lambda+\delta-w\delta]))$ 
($w\in W_{\Phi}$) with terms in the expansion of $\det A_{n}(\lambda)$, $\det B_{n}(\lambda)$, $\det C_{n}
(\lambda)$. Applying the involutive automorphism $\sigma$, we get $\det B'_{n}(\lambda)=b'_{n}(\lambda)$. 
For $\Phi=\D_{n}$, we define a new character $\epsilon': W_{n}\rightarrow\{1\}$ by $\epsilon'|_{W_{\D_{n}}}
=\epsilon|_{W_{\D_{n}}}$, and $\epsilon'(s_{e_1})=1$ (rather that $\epsilon(s_{e_1})=-1$). Due to 
$s_{e_{n}}(\delta_{\D_{n}})=\delta_{\D_{n}}$, we have $$F_{\D_{n},\lambda,W_{\D_{n}}}=\frac{1}{2}
\sum_{w\in W_{n}}\epsilon'(w)[\lambda+\delta-w\delta].$$ From this, we get $$\frac{1}{2}\det D_{n}(\lambda)
=d_{n}(\lambda).$$ Apparently, $\frac{1}{2}\det D_{n}(\lambda)=\det D'_{n}(\lambda)$.  
\end{proof}

\begin{proposition}\label{P:irreducible}
Assume $a_1\geq a_2\cdots\geq a_{n}\geq 0$, then each of $b_{n}(\lambda)$, $b'_{n}(\lambda)$, $c_{n}(\lambda)$, 
is an irreducible polynomial. Assume $a_1\geq a_2\cdots\geq|a_{n}|\geq 0$, then $d_{n}(\lambda)$ is an 
irreducible polynomial.
\end{proposition}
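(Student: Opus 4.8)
The plan is to adapt the irreducibility argument that was used in \cite{Larsen-Pink}, \cite{An-Yu-Yu} and \cite{Yu-dimension} for the specialization $\lambda=0$ (where $b_n=b_n(0)$, $c_n=c_n(0)$, $d_n=d_n(0)$ are the classical ``Weyl determinant polynomials''), now carrying the weight $\lambda=a_1e_1+\cdots+a_ne_n$ along. The essential point is that Lemma \ref{L:polynomial-matrix} exhibits each of $b_n(\lambda)$, $b'_n(\lambda)$, $c_n(\lambda)$ and $d_n(\lambda)$ as the determinant of an explicit $n\times n$ matrix whose $(i,j)$ entry is a signed sum of two variables $x_r$, and whose last column (or the relevant distinguished column, after the $D'_n$ normalization in the $\D_n$ case) involves the variables of lowest index. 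First I would record that $b_n(\lambda),c_n(\lambda),d_n(\lambda)$ are homogeneous of degree $n$ in $x_1,x_2,\dots$ (with $x_0=1$), with the monomial $x_{a_1+n-1}x_{a_2+n-2}\cdots x_{a_n}$ (up to sign, the ``leading'' diagonal term) appearing with coefficient $\pm1$; this pins down the degree and will feed the induction.

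Next I would set up the induction on $n$. The base case $n=1$ is immediate: $b_1(\lambda)=x_{a_1}-x_{a_1+2}$, $b'_1(\lambda)=x_{a_1}+x_{a_1+2}$, $c_1(\lambda)=x_{a_1}-x_{a_1+3}$ and $d_1(\lambda)=x_{a_1}$ (for $a_1\ge 0$), each of which is a nonzero linear form in distinct variables, hence irreducible (the $d_1$ case being trivially a single variable; here one uses $a_1\ge |a_1|\ge 0$ to avoid degeneracies such as a coincidence $|a_j+i-j|=|a_j+2n-i-j|$ collapsing a column). For the inductive step I would expand the determinant along the last column. In each case the last column is $(x_{|a_n+i-n|}\pm x_{|a_n+n-i|})_{i}$, and the key observation is that the variable $x_{a_1+\text{(large index)}}$ — concretely, the variable appearing in the $(1,1)$ slot of the complementary $(n-1)\times(n-1)$ block — occurs in exactly one of the cofactor-weighted terms, namely the one multiplying the cofactor of the $(n,n)$ entry. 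That cofactor, by Lemma \ref{L:polynomial-matrix} applied to the truncated weight $(a_1,\dots,a_{n-1})$, is (up to sign) $b_{n-1}(\lambda')$ (resp. $b'_{n-1}$, $c_{n-1}$, $d_{n-1}$), which is irreducible by the inductive hypothesis. So if $b_n(\lambda)=PQ$ were a nontrivial factorization, then — viewing everything as a polynomial in the single variable $x_{M}$ with $M$ the top index $a_1+n-1$ (or the appropriate maximal index), in which $b_n(\lambda)$ has degree exactly $1$ — one of $P,Q$, say $P$, is free of $x_M$, and $Q=x_M\cdot(\text{cofactor})+(\text{lower})$. Comparing the $x_M$-coefficient forces $P$ to divide the irreducible cofactor $b_{n-1}(\lambda')$, hence $P$ is a scalar or an associate of $b_{n-1}(\lambda')$; the latter is excluded by a degree count (degree $n$ versus $n-1$) once one also checks $b_{n-1}(\lambda')\nmid b_n(\lambda)$, which follows by specializing the ``new'' variables ($x_{|a_n\pm\cdots|}$ that appear in the last column but not in the block) and observing that $b_n(\lambda)$ does not vanish identically on the locus $b_{n-1}(\lambda')=0$. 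That last non-vanishing is where one genuinely uses the hypothesis $a_1\ge a_2\ge\cdots\ge a_n\ge 0$ (resp. $\ge|a_n|\ge0$ in the $\D_n$ case): it guarantees the indices $|a_j+i-j|$ occurring on the diagonal are ``generic'' enough that no two columns coincide and the distinguished variable $x_M$ really is present with coefficient $\pm1$.

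I would treat $b_n$, $b'_n$, $c_n$ uniformly by this last-column expansion (they differ only in the shift $2n+1$ vs $2n+2$ and in the sign $\pm$, which does not affect the argument), and handle $d_n$ via the normalized matrix $D'_n(\lambda)$ from Lemma \ref{L:polynomial-matrix}: there the last column is $y_{i,n}=\tfrac{\sqrt2}{2}(x_{|a_n+i-n|}+x_{|a_n+n-i|})$ for $i\le n-1$ and $y_{n,n}=x_{|a_n|}$, and expanding along it gives $d_n(\lambda)=x_{|a_n|}\cdot d_{n-1}(\lambda')+(\text{terms of lower }x_M\text{-degree})$ — here $\lambda'=(a_1,\dots,a_{n-1})$ still satisfies the $\D_{n-1}$ hypothesis — so the same cofactor-divisibility argument applies, with $d_{n-1}(\lambda')$ irreducible by induction. (One should double-check the edge case $a_n=0$ in the $\D_n$ statement, where $y_{n,n}=x_0=1$; then the expansion degenerates slightly and one instead expands along the last row, using $y_{n,j}=\sqrt2\,x_{|a_j+n-j|}$, reducing again to $d_{n-1}$.)

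The main obstacle I anticipate is the bookkeeping needed to justify that the chosen ``top'' variable $x_M$ appears linearly in the relevant determinant with coefficient exactly $\pm b_{n-1}(\lambda')$ (resp.\ $\pm d_{n-1}(\lambda')$) and in \emph{no other} cofactor term — equivalently, that no entry $x_{|a_j+2n+1-i-j|}$ (or the $\C_n$, $\D_n$ analogues) off the last column can also equal $x_M$. This is a purely combinatorial estimate on the index $|a_j+2n+1-i-j|$ under the monotonicity constraint $a_1\ge\cdots\ge a_n\ge0$, but it is exactly the place where the hypothesis is consumed and where a careless argument would break; everything else is a formal determinant expansion plus induction. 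I would isolate this index estimate as a short sub-lemma before running the induction.
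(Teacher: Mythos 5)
Your overall strategy coincides with the paper's: induct on $n$, isolate the indeterminate of largest index, observe that the determinant is linear in it with coefficient equal (up to sign) to a lower-rank polynomial of the same family, invoke the inductive irreducibility of that cofactor, and rule out the cofactor as a genuine factor. However, the concrete implementation contains errors that break the induction as written. First, the indeterminate of largest index in $b_n(\lambda)$ is not $x_{a_1+n-1}$ but $x_{a_1+2n-1}$, coming from the second summand $x_{|a_1+2n+1-1-1|}$ of the $(1,1)$ entry of $B_n(\lambda)$; the index $a_1+n-1$ is the top index only for the type-$\A$ matrix $A_n(\lambda)$. Second, and more seriously, your recursion is anchored at the wrong corner of the matrix. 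Because the shift $2n+1$ (resp.\ $2n+2$, $2n$) in the entries of $B_n(\lambda)$ (resp.\ $C_n(\lambda)$, $D_n(\lambda)$) depends on $n$, the minor obtained by deleting the last row and last column is \emph{not} $B_{n-1}(a_1,\dots,a_{n-1})$: its entries are $x_{|a_j+i-j|}-x_{|a_j+2n+1-i-j|}$ rather than $x_{|a_j+i-j|}-x_{|a_j+2(n-1)+1-i-j|}$. The minor that does reproduce the family is the one deleting the \emph{first} row and \emph{first} column; after reindexing it equals $B_{n-1}(\lambda')$ with $\lambda'=(a_2,\dots,a_n)$, so the truncation drops the first coordinate, not the last. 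This is consistent with where the top variable lives: $x_{a_1+2n-1}$ sits in the $(1,1)$ entry, so its coefficient in $\det B_n(\lambda)$ is $-1$ times the $(1,1)$ cofactor, namely $-b_{n-1}(a_2,\dots,a_n)$, which is exactly the paper's recursion. Relatedly, your claim that the top variable ``occurs in exactly one of the cofactor-weighted terms'' of a last-column expansion is false: an entry in the top-left corner appears in every minor $M_{i,n}$ with $i\neq 1$.

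Once the recursion is corrected to $b_n(\lambda)=-x_{a_1+2n-1}\,b_{n-1}(a_2,\dots,a_n)+\cdots$ (and analogously for $b'_n$, $c_n$, and $d_n$ via $D'_n$), your divisibility argument goes through as you describe and matches the paper's proof; the hypothesis $a_1\geq\cdots\geq a_n\geq 0$ is consumed precisely in checking that $a_j+2n+1-i-j\leq a_1+2n-1$ with equality only at $(i,j)=(1,1)$, so the determinant is honestly of degree one in the top variable. Note also that both you and the paper leave the step ``$b_{n-1}(\lambda')$ does not divide $b_n(\lambda)$'' essentially as an assertion; your idea of specializing the variables that occur in $b_n(\lambda)$ but not in the cofactor is a reasonable way to substantiate it, but it must be carried out with the correct truncation $\lambda'=(a_2,\dots,a_n)$.
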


\begin{proof}
For $b_{n}(\lambda)$, the indeterminate appearing in it with largest index is $x_{a_1+2n-1}$, and $$b_{n}(\lambda)
=-x_{a_1+2n-1}b_{n-1}(\lambda')+\cdots,$$ where $\lambda'=(a_2,\dots,a_{n})$. By induction, $b_{n-1}(\lambda')$ 
is irreducible. Apparently, $b_{n-1}(\lambda')$ does not divide $b_{n}(\lambda)$, hence $b_{n}(\lambda)$ is 
irreducible. Similarly, $b'_{n}(\lambda)$, $c_{n}(\lambda)$, $d_{n}(\lambda)$ are irreducible. 
\end{proof}

\begin{proposition}\label{P:a=cd-a=bb}
If $n=2m+1$ is odd, $a_1\geq a_2\cdots\geq a_{n}$, and $a_{n+1-i}+a_{i}=0$, $\forall i$, $1\leq i\leq m$, 
then $$a_{2m+1}(\lambda)=c_{m}(\lambda_1)d_{m+1}(\lambda_2),$$ where $\lambda_{1}=(a_1,\dots,a_{m})$, 
$\lambda_{2}=(a_1,\dots,a_{m+1})$. 

If $n=2m$ is even,  $a_1\geq a_2\cdots\geq a_{n}\geq 0$, and $a_{n+1-i}+a_{i}=0$, $\forall i$, $1\leq i\leq m$, 
then $$a_{2m}(\lambda)=b_{m}(\lambda_1)b'_{m}(\lambda_2),$$ where $\lambda_{1}=\lambda_2=(a_1,\dots,a_{m})$.

For $n\geq 1$, assume $a_1\geq a_2\cdots\geq a_{n}$, then $a_{n}(\lambda)$ is reducible only if 
$a_{i}+a_{n+1-i}=0$ for any $i$, $1\leq i\leq n/2$.  
\end{proposition}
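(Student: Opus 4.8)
The plan is to prove the three assertions of Proposition \ref{P:a=cd-a=bb} by a unified strategy: reduce each to an \emph{identity of determinants} via Lemma \ref{L:polynomial-matrix}, and deduce the reducibility criterion from the first two identities together with the irreducibility results of Proposition \ref{P:irreducible}.

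First I would establish the factorization identities. For $n=2m+1$ odd with the antisymmetry condition $a_{i}+a_{n+1-i}=0$ for $1\le i\le m$ (note this forces $a_{m+1}=0$), I would expand $\det A_{n}(\lambda)$ where $A_{n}(\lambda)=(x_{|a_{j}+i-j|})_{n\times n}$. The key point is that the hypothesis makes the \emph{column index set} $\{a_{j}-j:1\le j\le n\}$ symmetric about its midpoint in a way that pairs column $j$ with column $n+1-j$; one performs the change of summation variable $j\mapsto n+1-j$ inside the Leibniz expansion and splits permutations according to how they interact with this involution. Concretely, write $a_{j}+i-j = (a_{j}-j) + i$, observe $a_{n+1-j}-(n+1-j) = -a_{j}-n-1+j = -(a_{j}-j)-(n+1)$, and track how $|{\cdot}|$ behaves. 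The combinatorial heart is to recognize that the resulting block structure matches exactly the Leibniz expansions of $\det C_{m}(\lambda_{1})$ and $\det D_{m+1}(\lambda_{2})$ (using $\tfrac12\det D_{m+1}=\det D'_{m+1}=d_{m+1}$), i.e. $a_{2m+1}(\lambda)=c_{m}(\lambda_{1})d_{m+1}(\lambda_{2})$. The even case $n=2m$ is handled the same way, with the pairing now producing the factors $b_{m}(\lambda_{1})$ and $b'_{m}(\lambda_{2})=\sigma(b_{m}(\lambda_{1}))$; here there is no fixed column under the involution, which is why one gets two "B-type" factors rather than a "C" and a "D". I expect it is cleanest to verify these at the level of the characters $F_{\Phi,\lambda,W}$ in $\bbQ[\Lambda_{T}]$ before applying $E$, or alternatively to cite the special case $\lambda=0$ already implicit in \cite{Larsen-Pink}, \cite{An-Yu-Yu}, \cite{Yu-dimension} and check that the weight $\lambda$ rides along.

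Next, for the reducibility criterion, suppose $a_{n}(\lambda)=\det A_{n}(\lambda)$ factors nontrivially in $\bbQ[x_{0},x_{1},\dots]$. Since $a_{n}(\lambda)$ is homogeneous of degree $n$, a nontrivial factorization has both factors of positive degree. I would argue that such a factorization forces the antisymmetry $a_{i}+a_{n+1-i}=0$ by examining which indeterminates $x_{k}$ appear. As in the proof of Proposition \ref{P:irreducible}, the indeterminate of largest index occurring in $a_{n}(\lambda)$ is $x_{a_{1}-a_{n}+n-1}$ (coming from the $(1,n)$ and $(n,1)$ entries), and more generally one can read off from the matrix which $x_{k}$'s occur; the point is that the monomial supports of the two hypothetical factors must be "complementary," and a Laplace/cofactor expansion along the first row or first column shows the coefficient of $x_{a_{1}+\text{(top)}}$ is $\pm a_{n-1}(a_{2},\dots,a_{n})$ (or its $\sigma$-twist), which by induction is irreducible unless it itself satisfies the antisymmetry relations — and one traces these constraints back to $a_{i}+a_{n+1-i}=0$ for all $i\le n/2$.

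The main obstacle will be the bookkeeping in the first step: correctly matching the Leibniz expansion of $\det A_{n}(\lambda)$ — after the column involution $j\mapsto n+1-j$ and the attendant sign changes inside the absolute values $|a_{j}+i-j|$ — with the \emph{product} of the two smaller determinants, including getting the factor of $2$ right in the $D$-case (this is where $\tfrac12\det D_{m+1}=\det D'_{m+1}$ enters) and the involution $\sigma$ right in the $B$-case. A safe route is to reduce to the known case $\lambda=0$: there the identities $a_{2m+1}(0)=c_{m}(0)d_{m+1}(0)$ and $a_{2m}(0)=b_{m}(0)b'_{m}(0)$ are established in \cite{Yu-dimension} (Section 7), and the shift by a generic dominant $\lambda$ satisfying the antisymmetry amounts to the same reindexing of rows/columns; one checks the row index $i$ only enters through $i-j$ and is untouched by the column involution, so the argument is formally identical. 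For the converse (reducibility $\Rightarrow$ antisymmetry) the obstacle is ruling out "accidental" factorizations not of the above shape; the degree/homogeneity constraint plus the inductive irreducibility of $a_{n-1}$ on a generic weight should close this, but one must be careful that the induction hypothesis is applied to a weight that still satisfies $a_{1}\ge\cdots\ge a_{n-1}$, which it does after deleting the extreme coordinates.
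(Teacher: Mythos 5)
Your high-level strategy is the right one, but both halves of the proposal stop short of the actual work, and in each case the missing step is the substance of the proof.

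For the factorization identities, the pairing $j\mapsto n+1-j$ forced by $a_j+a_{n+1-j}=0$ is indeed the engine, but neither of your two implementations is a proof. The Leibniz-expansion matching is deferred as ``bookkeeping,'' and the fallback --- cite the $\lambda=0$ case and claim the weight ``rides along'' --- is not a valid reduction: for general $\lambda$ one still has to identify the two half-size pieces with the specific matrices $B_m(\lambda_1)$, $B'_m(\lambda_2)$ (resp.\ $C_m(\lambda_1)$, $D'_{m+1}(\lambda_2)$), and that identification is exactly what has to be checked; nothing follows formally from the $\lambda=0$ case. The paper's mechanism is the centrosymmetric-matrix trick: with $L_m$ the reversal matrix, the hypothesis puts $A_{2m}(\lambda)$ in the block form $\left(\begin{smallmatrix}X&Y\\L_mYL_m&L_mXL_m\end{smallmatrix}\right)$, and conjugating by $\left(\begin{smallmatrix}I&-L_m\\L_m&I\end{smallmatrix}\right)$ (up to scale) block-diagonalizes it with blocks $X+YL_m=B'_m(\lambda_2)$ and $L_m(X-YL_m)L_m$ with $X-YL_m=B_m(\lambda_1)$; in the odd case the extra middle row and column are absorbed into a block equal to $D'_{m+1}(\lambda_2)$, which is precisely where the normalization $\tfrac12\det D_{m+1}=\det D'_{m+1}=d_{m+1}$ is used. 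That short computation is the step your proposal still owes.

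For the reducibility criterion the gap is larger. The expansion along the top indeterminate reads $a_n(\lambda)=x_{a_1+n-1}\,a_{n-1}(\lambda')+\cdots$ with $\lambda'=(a_2-1,\dots,a_n-1)$ (note the shift by $1$, absent from your sketch), and writing $a_n(\lambda)=(x_{a_1+n-1}u_1+v_1)u_2$ one must treat two cases that your plan does not address: (i) $\deg u_1>0$, in which $a_{n-1}(\lambda')=u_1u_2$ is itself reducible, the induction hypothesis yields $a_i+a_{n+2-i}=2$, and this has to be refuted by examining the coefficient of $x_{a_1+n-2}x_{a_2+n-2}$; and (ii) $\deg u_1=0$, in which the entire cofactor $a_{n-1}(\lambda')$ divides $a_n(\lambda)$ and one derives a contradiction by comparing two distinct $(n-1)\times(n-1)$ minors. ``Degree/homogeneity plus inductive irreducibility should close this'' is exactly the part that requires an argument. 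Two further slips: the largest index occurring in $a_n(\lambda)$ is $\max(a_1+n-1,\;n-1-a_n)$, not $a_1-a_n+n-1$ (no entry of $A_n(\lambda)$ carries that index), and after $a_1+a_n=0$ is secured the induction must descend to $a_{n-2}(\lambda''')$ by deleting both extreme coordinates --- together with a verification that $a_{n-2}(\lambda''')$ does not divide $a_n(\lambda)$ --- not to $a_{n-1}$ as your outline suggests.
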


\begin{proof}
Define $L_{m}$ inductively by $L_{1}=1$ and \[L_{m}=\left(\begin{array}{ccc}&&1\\&L_{m-2}&\\1&&\end{array}
\right)\] for any $m\geq 2$. Then, $L_{m}^{2}=I$. The matrix $A_{2m}(\lambda)$ is of the form $$\left(
\begin{array}{cc}X&Y\\L_{m}YL_{m}&L_{m}XL_{m}\\\end{array}\right),$$ where $X,Y$ are two $m\times m$  
matrices. By calculation we have \begin{eqnarray*}&&\frac{1}{2}\left(\begin{array}{cc}I&L_{m}\\-L_{m}&I
\\\end{array}\right)\left(\begin{array}{cc}X&Y\\L_{m}YL_{m}&L_{m}XL_{m}\\\end{array}\right)
\left(\begin{array}{cc}I&-L_{m}\\L_{m}&I\\\end{array}\right)\\&&=\left(\begin{array}{cc}X+YL_{m}&0\\0
&L_{m}XL_{m}-L_{m}Y\\\end{array}\right).\end{eqnarray*} One can check that $X+YL_{m}$ ($X-YL_{m}$) is 
just the matrix $B'_{m}(\lambda_{2})$ ($B_{n}(\lambda_{1})$), thus $$a_{2m}(\lambda)=b_{m}(\lambda_1)
b'_{m}(\lambda_2).$$

The matrix $A_{2m+1}(\lambda)$ is of the form $$\left(\begin{array}{ccc}X&\beta^{t}&Y\\ \alpha&z&
\alpha L_{m}\\L_{m}YL_{m}&\gamma^{t}&L_{m}XL_{m}\\\end{array}\right),$$ where $X,Y$ are two $m\times m$ 
matrices, $\alpha,\beta,\gamma$ are $1\times m$ vectors. By calculation we have \begin{eqnarray*}&&
\frac{1}{2}\left(\begin{array}{ccc}I&&L_{m}\\&\sqrt{2}&\\-L_{m}&&I\\\end{array}\right)\left(\begin{array}
{ccc}X&\beta^{t}&Y\\\alpha&z&\alpha L_{m}\\L_{m}YL_{m}&\gamma^{t}&L_{m}XL_{m}\\\end{array}\right)\left(
\begin{array}{ccc}I&&-L_{m}\\&\sqrt{2}&\\L_{m}&&I\\\end{array}\right)\\&&=\left(\begin{array}{ccc}X+YL_{m}
&\frac{\sqrt{2}}{2}(\beta^{t}+L_{m}\gamma^{t})&0\\\sqrt{2}\alpha&z&0\\0&\frac{\sqrt{2}}{2}(-L_{m}\beta^{t}
+\gamma^{t})&L_{m}XL_{m}-L_{m}Y\\\end{array}\right).\end{eqnarray*} The matrix $$\left(\begin{array}{cc}X
+YL_{m}&\frac{\sqrt{2}}{2}(\beta^{t}+L_{m}\gamma^{t})\\\sqrt{2}\alpha&z\\\end{array}\right)$$ is just 
$D'_{m+1}(\lambda_{2})$, and the matrix $X-YL_{m}$ is just $C_{m}(\lambda_1)$. Thus, $$a_{2m+1}(\lambda)
=c_{m}(\lambda_1)d_{m}(\lambda_2).$$ 

We prove the last statement by induction on $n$. When $n=1$, it is trivial. When $n=2$, it is easy to 
check. Now we assume $n\geq 3$ and the statement is true for $1,2,..,n-1$, and show it in the $n$ case. 
First we show $a_1+a_{n}=0$. Suppose $a_1+a_{n}\neq 0$ and $a_{n}(\lambda)$ is reducible. By symmetry 
we may assume that $a_1+a_{n}\geq 1$. Among the indeterminates appearing in $a_{n}(\lambda)$, the one 
of largest index is $x_{a_1+n-1}$, and $$a_{n}(\lambda)=x_{a_1+n-1}a_{n-1}(\lambda')+\cdots,$$ where 
$\lambda'=(a_{2}-1,\dots,a_{n}-1)$. Write $a_{n}(\lambda)=(x_{a_{1}+n-1}u_1+v_1)u_2$ with $\deg u_2
\geq 1$. There are two cases according to $\deg u_1>0$ or $\deg u_1=0$. In the case of $\deg u_1>0$, 
we have $a_{n-1}(\lambda')=u_{1}u_{2}$. By induction, $a_{i}+a_{n+2-i}=2$, $\forall i$, $2\leq i\leq n$. 
In this case, $a_1+n-2\geq a_{2}+n-2>a_2+n-3=-(a_{n}+1-n)$. The sum of terms in $a_{n}(\lambda)$ dividing 
$x_{a_1+n-2}x_{a_{2}+n-2}$ is $x_{a_1+n-2}x_{a_{2}+n-2}a_{n-2}(\lambda'')$, where $\lambda''=(a_{2}-2,
\dots,a_{n}-2)$. The indeterminates $x_{a_1+n-2}$ and $x_{a_{2}+n-2}$ can only appear in $v_1$. Hence, 
$u_2$ is a factor of $a_{n-2}(\lambda'')$. If $n\geq 4$, then $a_{i}+a_{n+3-i}=4$, $\forall i$, 
$3\leq i\leq n$. This leads to $a_{3}>a_{2}$, hence a contradiction. If $n=3$, then 
$u_2=\pm{}a_{1}(\lambda'')$ and $a_{1}(\lambda'')|a_{2}(\lambda)$, which does not hold, hence a 
contradiction. In the case of $\deg u_1=0$, we may assume that $u_1=1$ and $u_2=a_{n-1}(\lambda')$. 
Due to $a_1+n-2\geq\max{|a_{j}+i-j|: 1\leq i\leq n-1, 2\leq j\leq n}$, then $v_1$ has a term 
$\pm{}x_{a_1+n-2}$. Then, the determinant of the remaining matrix of $\A_{n}(\lambda)$ by removing the 
last row and the first column (which is equla to $a_{n-1}(\lambda')$) is equal to the determinant of 
the remaining matrix of $\A_{n}(\lambda)$ by removing the $n-1$-th row and the first column. This is 
impossible as their leading terms are different. Now assume $a_1+a_{n}=0$, then $$a_{n}(\lambda)=
x_{a_1+n-1}x_{-(a_{n}+1-n)}a_{n-2}(\lambda''')+\cdots,$$ where $\lambda'''=(a_{2}-1,\dots,a_{n-1}-1)$. 
One can show that $a_{n-2}(\lambda''')$ does not divide $a_{n}(\lambda)$ (in case $n\geq 4$). Thus, 
$a_{n-2}(\lambda''')$ is reducible. By induction, $a_{i}+a_{n+1-i}=0$ for any $i$, $2\leq i\leq n/2$.  
\end{proof}

\medskip 

\noindent{\it Step 3, final conclusion.} 

By Proposition \ref{P:a=cd-a=bb}, $$a_{2n+1}(\lambda)=c_{n}(\lambda_1)d_{n+1}(\lambda_2).$$ This shows 
Equation \ref{Eq:A=CD}, hence the conclusion of Theorem \ref{T:tau-example}.

\subsection{Equalities and linear relations among $\tau$-dimension data}

Like in \cite[Section 4]{Yu-dimension}, Proposition \ref{P:dimension-character} leads us to study equalities 
among characters $$\{F_{\Phi,\lambda,\Aut(\Psi)}: \Phi\subset\Psi,\lambda\in\Lambda_{\Phi}(\mathbb{Q}\Psi)
\}$$ for a given root system $\Psi$, where the lattices $\mathbb{Q}\Psi$ and $\Lambda_{\Phi}(\mathbb{Q}\Psi)$ 
are as introduced in \cite[Pages 2687-2688]{Yu-dimension}). Similarly, if we want to study linear relations 
among $\tau$-dimension data for connected subgroups, then we need to study linear relations among the 
characters $\{F_{\Phi,\lambda,\Aut(\Psi)}: \Phi\subset\Psi,\lambda\in\Lambda_{\Phi}(\mathbb{Q}\Psi)\}$. 
As in \cite[Section 7]{Yu-dimension}, both studies may reduce to the case of $\Psi$ is an irreducible 
root system.  

When $\Psi=\BC_{n}$ and $\lambda\in\mathbb{Z}\Psi$, the equalities/linear relations among 
$F_{\Phi,\lambda,\Psi}$ correspond to multiplicative/algebraic relations among the polynomials 
$$\{a_{n}(\lambda),b_{n}(\lambda),c_{n}(\lambda),d_{n}(\lambda): n\geq 1\}.$$ Propositions \ref{P:irreducible} 
and \ref{P:a=cd-a=bb} give all such multiplicative relations. 

\begin{question}\label{Q:tau-dimension}
Is there a generalization of Theorem \cite[Theorem 1.5(2)]{An-Yu-Yu} to $\tau$-dimension data of more general 
weights? 

Moreover, can we find all generating relations of algebraic relations among the polynomials 
$\{a_{n}(\lambda),b_{n}(\lambda),c_{n}(\lambda),d_{n}(\lambda): n\geq 1\}$? 
\end{question}

When $\Psi=\B_{n}$, $\C_{n}$ or $\D_{n}$ and $\lambda\in\mathbb{Z}_{n}=\mathbb{Z}\BC_{n}$, it reduces to the 
$\Psi=\BC_{n}$ case, so is for $\Psi=\A_{n-1}$. However, for these root systems, the weight $\lambda$ is not 
necessarily $\lambda\in\mathbb{Z}_{n}$. To have a complete study we need to consider polynomials associated 
to non-integral weights. We don't know if this arises new complication, or gives new interesting phenomenon.  

\smallskip 

When $\Psi$ is an exceptional irreducible root system, from \[A_{\Phi,\lambda}=\sum_{w\in W_{\Phi}}\sgn(w)
[\lambda+\delta_{\Phi}-w\delta_{\Phi}]\] and \[F_{\Phi,\lambda,\Aut(\Psi)}=\frac{1}{|\Aut(\Psi)|}\sum_{\gamma
\in\Aut(\Psi)}\gamma(A_{\Phi,\lambda}),\] the orbits $\Aut(\Psi)\lambda$ and $\Aut(\Psi)(\lambda+2\delta_{\Phi})$ 
are determined, as well as the polynomial $$\sum_{w\in W_{\Phi}}\sgn(w)t^{|\lambda+\delta_{\Phi}-
w\delta_{\Phi}|^{2}}=t^{|\lambda|^{2}}\prod_{\alpha\in\Phi^{+}}(1-t^{(\lambda+\delta_{\Phi},2\alpha)})$$ 
(cf. \cite[Proposition 5.2]{Yu-dimension}). These invariants all involve the weight $\lambda$. The only 
invariant without involving $\lambda$ which could be extracted from them easily is $\prod_{\alpha\in\Phi^{+}}
(1-t^{(\lambda+\delta_{\Phi},2\alpha)})$, but this seems too weak. We do not know how to use the three 
invariants in an effective way, so as to find all equalities among $\{F_{\Phi,\lambda,\Aut(\Psi)}: 
\Phi\subset\Psi,\lambda\in\Lambda_{\Phi}(\mathbb{Q}\Psi)\}.$ We have no idea yet for how to study linear 
relations among the characters $\{F_{\Phi,\lambda,\Aut(\Psi)}: \Phi\subset\Psi,\lambda\in\Lambda_{\Phi}
(\mathbb{Q}\Psi)\}.$

\subsection{Isospectral hermitian vector bundles}  

Let $H$ be a closed subgroup of a connected compact Lie group $G$, and $(V_{\tau},\tau)$ ($V_{\tau}$ 
is the representation space of $\tau\in\widehat{H}$) be a finite-dimensional irreducible complex linear 
representation of $H$. Write $E_{\tau}=G\times_{\tau}V_{\tau}$ for a $G$-equivariant vector bundle on 
$X=G/H$ induced from $V_{\tau}$. As a set, $E_{\tau}$ is the set of equivalence classes in 
$G\times V_{\tau}$, $$(g,v)\sim(g',v')\Leftrightarrow\exists x\in H\textrm{ s.t. } g'=gx,\ v'=x^{-1}
\cdot v.$$ Write $C^{\infty}(G/H,E_{\tau})$ for the space of smooth sections of $E_{\tau}$. Then, 
$$C^{\infty}(G/H,E_{\tau})=(C^{\infty}(G,V_{\tau}))^{H},$$ where $C^{\infty}(G,V_{\tau})$ the space 
of smooth functions $f: G\rightarrow V_{\tau}$ and $H$ acts on it through $$(xf)(g)=x\cdot f(gx).$$ 
The group $G$ acts on $C^{\infty}(G/H,E_{\tau})$ through $$(g'f)(g)=f(g'^{-1}g).$$ By differentiation, 
we get an action of $\mathfrak{g}_{0}=\Lie G$ on $C^{\infty}(G/H,E_{\tau})$, and so an action of the 
universal enveloping algebra $U(\mathfrak{g}_{0})$ on $C^{\infty}(G/H,E_{\tau})$. Let $\Delta_{\tau}$ 
denote the resulting differential operator on $C^{\infty}(G/H,E_{\tau})$ from the Casimir element 
$C\in Z(\mathfrak{g}_{0})=U(\mathfrak{g}_{0})^{G}$. The action of $\Delta_{\tau}$ on $C^{\infty}(G/H,
E_{\tau})$ commutes with the action by $G$, and it is an second order elliptic differential operaotr. 

Choose an $H$-invariant unitary form $\langle\cdot,\cdot\rangle$ on $V_{\tau}$ (which is unique up to 
a scalar). It induces a hermitian metric on $E_{\tau}$ and makes it a hermitian vector bundle\footnotemark. 
Define a hermitian pairing $(\cdot,\cdot)$ on $C^{\infty}(G/H,E_{\tau})$ by $$(f_1,f_2)=\int_{G/H}\langle 
f_{1}(g),f_{2}(g)\rangle d(gH),$$ where $d(gH)$ is a $G$-equivariant measure on $G/H$ of volume $1$. 
As $\Delta_{\tau}$ is an elliptic differenrial operator, any eightform of it in $L^{2}(G/H,E_{\tau})$ 
is a smooth section. By the Peter-Weyl theorem, $$L^{2}(G/H,E_{\tau})=\hat{\bigoplus}_{\rho\in\widehat{G}}
L^{2}(G/H,E_{\tau})_{\rho}$$ and the $\rho$-isotropic subspace, $L^{2}(G/H,E_{\tau})_{\rho}=
\Hom(\tau,\rho|_{H})\otimes_{\mathbb{C}}\rho.$ Also, we know that $\Delta_{\tau}$ acts on the 
$\rho$-isotropic component $L^{2}(G/H,E_{\tau})_{\rho}$ by a scalar determined by $\rho$. By this, we 
have the following fact: if $\mathscr{D}_{H_1,\tau_1}=\mathscr{D}_{H_2,\tau_2}$, then the Hermitian 
vector bundles $E_{\tau_1}=G\times_{\tau_1}V_{\tau_{1}}$ (on $G/H_1$) and $E_{\tau_2}=G\times_{\tau_2}
V_{\tau_2}$ (on $G/H_2$) are isospectral. Here, isospectral means the multiplicities of eigenspaces 
of $\Delta_{\tau_1}$ and $\Delta_{\tau_2}$ are the same for any eigenvalue. 
\footnotetext{We could define a Laplace-Beltrami operator on $C^{\infty}(G/H,E_{\tau})$ from the hermitian 
metric on $E_{\tau}$ such that it coincides with $\Delta_{\tau}$ up to a scalar.}

\begin{corollary}\label{C:isomorphic}
For $G=\SU(4n+2)$, subgroups $H_1,H_2$ and representations $\tau_{\lambda}$ and $\tau_{\lambda'}$ as in 
Theorem \ref{T:tau-example}, the hermitian vector bundles $E_{\tau_{\lambda}}=G\times_{\tau_{\lambda}}
V_{\tau_{\lambda}}$ (on $G/H_1$) and $E_{\tau_{\lambda'}}=G\times_{\tau_{\lambda'}}V_{\tau_{\lambda'}}$ 
(on $G/H_2$) are isospectral. 
\end{corollary}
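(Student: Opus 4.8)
The plan is to deduce Corollary \ref{C:isomorphic} directly from Theorem \ref{T:tau-example} together with the spectral discussion preceding the corollary. Concretely, I would first invoke Theorem \ref{T:tau-example} to obtain the equality of $\tau$-dimension data $\mathscr{D}_{H_1,\tau_{\lambda}}=\mathscr{D}_{H_2,\tau_{\lambda'}}$ for $G=\SU(4n+2)$, with $H_1\cong\U(2n+1)$ and $H_2\cong\Sp(n)\times\SO(2n+2)$ embedded as described. This is the substantive input, and the rest is a formal consequence of the general representation-theoretic description of the spectrum of $\Delta_{\tau}$.

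Next I would recall, from the paragraph just above the corollary, the Peter--Weyl decomposition
\[
L^{2}(G/H_i,E_{\tau_i})=\hat{\bigoplus}_{\rho\in\widehat{G}}\Hom_{H_i}(\tau_i,\rho|_{H_i})\otimes_{\bbC}\rho,
\]
together with the fact that $\Delta_{\tau_i}$ acts on the $\rho$-isotypic summand by a scalar $c(\rho)$ that depends only on $\rho\in\widehat{G}$ (it is the value by which the Casimir $C\in Z(\mathfrak{g}_0)$ acts in $\rho$), and in particular is \emph{independent} of $i$. Consequently the multiplicity of the eigenvalue $c$ of $\Delta_{\tau_i}$ is
\[
\sum_{\rho:\,c(\rho)=c}\mathscr{D}_{H_i,\tau_i}(\rho)\cdot\dim\rho,
\]
and since $\mathscr{D}_{H_1,\tau_{\lambda}}=\mathscr{D}_{H_2,\tau_{\lambda'}}$ as functions on $\widehat{G}$, these multiplicities agree for every $c$. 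Hence $\Delta_{\tau_{\lambda}}$ on $E_{\tau_{\lambda}}$ and $\Delta_{\tau_{\lambda'}}$ on $E_{\tau_{\lambda'}}$ have the same spectrum with multiplicities, which is exactly the asserted isospectrality of the two hermitian vector bundles.

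I would also remark that $E_{\tau_{\lambda}}$ and $E_{\tau_{\lambda'}}$ carry the hermitian metrics induced by the (essentially unique) $H_i$-invariant unitary forms on $V_{\tau_i}$, and that, as noted in the footnote, $\Delta_{\tau_i}$ agrees up to a positive scalar with the Laplace--Beltrami operator attached to that metric; rescaling does not affect the comparison, so the statement holds for either normalization. There is essentially no obstacle here beyond bookkeeping: the only genuine content is Theorem \ref{T:tau-example}, and the passage from equal $\tau$-dimension data to isospectrality is purely formal via Peter--Weyl and the $G$-invariance of $\Delta_{\tau}$. If anything needs care, it is merely making explicit that the base spaces $G/H_1$ and $G/H_2$ are in general non-isometric (indeed $H_1\not\cong H_2$), so the corollary produces a genuinely nontrivial example of isospectral hermitian vector bundles over non-isometric manifolds, paralleling \cite[Theorem 1.5(1)]{An-Yu-Yu} in the $\tau=1$ case.
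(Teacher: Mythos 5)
Your proposal is correct and follows exactly the route the paper intends: the corollary is the immediate combination of Theorem \ref{T:tau-example} with the general fact, established in the paragraph preceding the corollary via the Peter--Weyl decomposition $L^{2}(G/H,E_{\tau})_{\rho}=\Hom(\tau,\rho|_{H})\otimes_{\bbC}\rho$ and the $\rho$-dependence of the Casimir scalar, that equal $\tau$-dimension data imply isospectral bundles. Nothing further is needed.
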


\subsection{Generalization of a theorem of Larsen-Pink}\label{SS:Taylor}

Langlands associated a (conjectural) group $H^{\pi}$ to an automorphic form $\pi$ (\cite{Langlands}). If the 
Langlands group $L_{F}$ exists (which is a much bigger conjecture), then $H^{\pi}$ should be the image of the 
Langlands $L$-homomorphism for $\pi$. Besides dimension datum, it is interesting to see if more invariant 
theory data could determine $H^{\pi}$. The following proposition\footnotemark concerns this. In case $H$ is 
semisimple, it is \cite[Theorem 1]{Larsen-Pink}. 
\footnotetext{Proposition \ref{P:Taylor} is proposed by Professor Richard Taylor. The author would like to 
thank him for discussion about dimension datum and related subjects.} 

\begin{proposition}\label{P:Taylor}
let $G,H$ be connected compact Lie groups, and $f_1,f_2: H\rightarrow G$ be two homomorphisms. If 
$$\dim((\rho\circ f_1)\otimes\chi)^{H}=\dim((\rho\circ f_2)\otimes\chi)^{H}$$ for any $\rho\in\widehat{G}$ 
and any $\chi\in\mathcal{X}(H)=\Hom(H,\U(1))$, then $f_1(H)\cong f_2(H)$.  
\end{proposition}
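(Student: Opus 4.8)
The plan is to reduce the statement to the classical theorem of Larsen--Pink (the $H$ semisimple case, \cite[Theorem 1]{Larsen-Pink}), applied to the derived subgroups, together with a bookkeeping argument for the central torus. First I would replace $H$ by its image under $f_i$, so it suffices to treat injective $f_i$; equivalently, set $L_i=f_i(H)$ and note that the hypothesis says the $\chi$-dimension data $\mathscr{D}_{L_i,\chi}$ (taken inside $G$) agree after pulling characters back to $H$. Since $L_i$ is connected compact, write $L_i = L_i^{s}\cdot Z_i$ where $L_i^{s}=(L_i)_{der}$ is semisimple and $Z_i=Z(L_i)^0$ is a torus, with $L_i^{s}\cap Z_i$ finite. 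The representation-theoretic content of the hypothesis is a statement about how $\rho|_{L_i}$ decomposes, refined by a twist; the idea is that letting $\chi$ range over all linear characters lets one read off both the semisimple ``shape'' of $L_i$ and the way $Z_i$ sits inside $G$.

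The key steps, in order: (1) Observe that for $\rho\in\widehat G$ and $\chi\in\mathcal X(H)$, $\dim((\rho\circ f_i)\otimes\chi)^{H}$ equals the multiplicity in $\rho|_{L_i}$ of the linear character of $L_i$ dual to (the descent of) $\chi$, summed appropriately; more usefully, encode all these numbers into the restriction-to-$L_i$ character $\overline{\chi_\rho}|_{L_i}$ regarded as a class function, and note the hypothesis says these class functions on $L_1$ and $L_2$ ``match'' under the identification coming from the $f_i$. (2) Pass to a maximal torus: by the argument underlying Proposition \ref{P:dimension-character} (the Weyl-integration / Weyl-denominator manipulation recalled there), the data $\{\mathscr{D}_{L_i,\chi}\}_{\chi}$ determine, and are determined by, the collection of characters $F_{\Phi_i,\lambda,\Gamma^\circ}$ together with the position of the torus $T$ and, crucially, the lattice translates recording the linear characters of $L_i$ — i.e. the pair consisting of the root system $\Phi_i$ of $L_i^{s}$ and the embedding $X^{\ast}(L_i)\hookrightarrow\Lambda_T$. (3) From the $\chi=1$ part alone, Larsen--Pink applied to $L_i^{s}$ (which is legitimate once one checks $L_i^{s}$ is recovered up to isomorphism from the semisimple part of the data) gives $L_1^{s}\cong L_2^{s}$. (4) Run over all $\chi$ to recover the rank of $Z_i$ and how $Z_i$ meets $L_i^{s}$, i.e. the finite group $L_i^{s}\cap Z_i$ and the extension class; assembling $L_i$ from $L_i^{s}$, $Z_i$, and this gluing datum yields $L_1\cong L_2$, hence $f_1(H)\cong f_2(H)$.

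I would carry out step (2) by mimicking the proof of Propositions 3.7--3.8 of \cite{Yu-dimension} as invoked in the proof of Proposition \ref{P:dimension-character}: expand each $\chi_\rho|_T$ in $\bbQ[\Lambda_T]$, average over $\Gamma^\circ$, and observe that twisting $\rho$ by a character of $G$ or $\tau=\chi$ by a character of $H$ simply translates the supporting weights; letting $\chi$ vary therefore sweeps out an entire coset structure in $\Lambda_T$ from which $X^{\ast}(L_i)$ is visible. The honest subtlety is that $G$ need not be semisimple and $H$ need not be, so one must be careful that ``twist by $\chi\in\mathcal X(H)$'' is rich enough: characters of $H$ need not all extend to $G$, but that is fine — we only need the pullbacks to separate the relevant weights in $\Lambda_T$, which they do because $\mathcal X(H)\to X^{\ast}(T\cap H)$ has finite cokernel.

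\textbf{Main obstacle.} The hard part will be step (3)--(4): upgrading from ``same averaged characters $F_{\Phi_i,\lambda,\Gamma^\circ}$ for all $\lambda$ in the relevant lattice'' to an actual isomorphism $L_1\cong L_2$ of compact Lie groups, not merely $L_1^{s}\cong L_2^{s}$ plus equal ranks. Larsen--Pink handles the semisimple case by a delicate argument (Coxeter number, reflection subgroup combinatorics, the classification); here one additionally has to control the finite group $\pi_0$-type data — the isogeny type of $L_i^{s}$ and the embedding of the central torus — purely from the multiplicity bookkeeping, and to check these are all captured once $\chi$ ranges over \emph{all} of $\mathcal X(H)$ rather than just characters that extend to $G$. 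I expect this to require a separate lemma isolating exactly which lattice-and-root-system invariants the family $\{F_{\Phi_i,\lambda,\Gamma^\circ}\}_\lambda$ pins down, analogous to the role Propositions \ref{P:irreducible} and \ref{P:a=cd-a=bb} play in the $\BC_n$ computation, but now in the direction of a uniqueness (rather than coincidence) statement.
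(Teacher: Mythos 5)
There is a genuine gap at your step (3), and it is fatal to the plan as written. You claim that ``from the $\chi=1$ part alone, Larsen--Pink applied to $L_i^{s}$ \dots gives $L_1^{s}\cong L_2^{s}$.'' But the $\chi=1$ hypothesis only says $\mathscr{D}_{L_1}=\mathscr{D}_{L_2}$ as subgroups of $G$, where the $L_i$ are connected but \emph{not} semisimple, so \cite[Theorem 1]{Larsen-Pink} does not apply; and the dimension datum of $L_i$ does not determine that of $L_i^{s}$. Indeed the standard counterexample from \cite{An-Yu-Yu} recalled in Section 2 --- $L_1\cong\U(2n+1)$ and $L_2\cong\Sp(n)\times\SO(2n+2)$ in $\SU(4n+1)$ --- has $\mathscr{D}_{L_1}=\mathscr{D}_{L_2}$ while $L_1^{s}=\SU(2n+1)\not\cong L_2^{s}$. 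So the ``check'' you defer (that $L_i^{s}$ is recovered from the semisimple part of the $\chi=1$ data) provably fails, and the entire point of the proposition is that the twists by nontrivial $\chi$ are what rescue the statement. Your step (4) then has nothing left to do the real work.

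The missing idea, which is the heart of the paper's argument, is to use the characters $\chi$ \emph{before} invoking Larsen--Pink, in two ways. First, varying $\chi$ over $\mathcal{X}(H)$ and comparing lowest-order terms of $F_{\Phi_i,\chi_i,\Gamma^{\circ}}$ (the weight $\chi_i$ is orthogonal to all $\delta_{\Phi_i}-w\delta_{\Phi_i}$, so $\chi^{\ast}_{\chi_i,\Gamma^{\circ}}$ is the shortest term) pins down the central data: after conjugation one gets $Z(H_1)^{0}=Z(H_2)^{0}=:Z$ and $(T_1)_{s}=(T_2)_{s}$. Second --- and this is the decisive trick --- one chooses $\chi_0$ in the character lattice $X$ of $T/T_s$ fixed by no element of $\Gamma^{\circ}\setminus\Gamma'$, where $\Gamma'=N_{G'}(T_s)/Z_{G'}(T_s)$ and $G'=Z_G(Z)$, and replaces it by $\chi=m\chi_0$ with $m$ large enough that $|\gamma\chi-\chi|$ exceeds the diameter of the $\delta$-terms for all $\gamma\notin\Gamma'$. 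Then the identity $F_{\Phi_1,\chi,\Gamma^{\circ}}=F_{\Phi_2,\chi,\Gamma^{\circ}}$ collapses to $F_{\Phi_1,0,\Gamma'}=F_{\Phi_2,0,\Gamma'}$, i.e.\ to an equality of \emph{ordinary} dimension data of the semisimple groups $(H_i)_{der}$ inside the smaller ambient group $G'$. Only at that point is Larsen--Pink legitimately applicable, and a further argument (via the root systems $\Psi_{T_s}$ as in \cite[Section 7]{Yu-dimension}) is still needed to extend the resulting isomorphism $(H_1)_{der}\cong(H_2)_{der}$ across the center to $H_1\cong H_2$. Separately, your reduction to injective $f_i$ also skips the paper's first step, the separating-character argument showing $H_{der}\ker f_1=H_{der}\ker f_2$, without which the pullback of $\mathcal{X}(L_1)$ and $\mathcal{X}(L_2)$ to $H$ need not agree and the comparison of twisted multiplicities is not even well posed.
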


\begin{proof}[Proof of Proposition \ref{P:Taylor}] 

\noindent{\it The tours case.} To motivate the proof in the general case, we first show Proposition 
\ref{P:Taylor} {\it in the case of $H$ is a torus.} First we show $\ker f_{1}=\ker f_2$. Suppose no. 
Without loss of generality we assume that $\ker f_{1}\not\subset\ker f_2$. Then, there exists $\chi\in
\mathcal{X}(H)$ such that $\chi|_{\ker f_{1}}\neq 1$ and $\chi|_{\ker f_{2}}=1$. For any $\rho\in
\widehat{G}$, $\rho\circ f_{1}|_{\ker f_{1}}=1$, hence $\dim((\rho\circ f_1)\otimes\chi)^{H}=0$. 
As $\chi|_{\ker f_{2}}=1$, $\chi$ descends to a linear character $\chi'$ of $f_{2}(H)\subset G$. Choose 
some $\rho\in\widehat{G}$ such that $\rho\subset\Ind_{f_{2}(H)}^{G}(\chi'^{\ast}).$ Then, $\dim((\rho
\circ f_2)\otimes\chi)^{H}>0$. This is in contradiction with $\dim((\rho\circ f_1)\otimes\chi)^{H}=
\dim((\rho\circ f_2)\otimes\chi)^{H}$ for any $\rho\in\widehat{G}$. Thus, we have showed $\ker f_{1}=
\ker f_2$. 

By considering $H/\ker f_{1}$ instead, we may assume that both $f_{1}$ and $f_{2}$ are injections. By 
considering the support of the Sato-Tate measures of $f_{1}(H)$ and $f_{2}(H)$, we know that $f_{1}(H)$ 
and $f_{2}(H)$ are conjugate in $G$ (\cite[Proposition 3.7]{Yu-dimension}). We may assume that 
$f_{1}(H)=f_{2}(H)$, and denote it by $T$. Write $\Gamma^{\circ}=N_{G}(T)/Z_{G}(T)$. 

We identify $H$ with $T$ through $f_1$, and regard $f_{2}$ as an automorphism of $T$, denoted by $\phi$. 
Then, the condition is equivalent to: $$F_{\emptyset,\chi,\Gamma^{0}}=F_{\emptyset,\phi^{\ast}(\chi),
\Gamma^{0}}.$$ Which is equivalent to: $\phi^{\ast}(\chi)\in\Gamma^{\circ}\cdot\chi$. We show that 
$\phi=\gamma|_{T}$ for some $\gamma\in\Gamma^{\circ}$. Suppose it is not the case. For any $\gamma\in
\Gamma^{0}$, due to $\phi\neq\gamma|_{\Gamma^{0}}$, $$X_{\gamma}=\{\chi\in\mathcal{X}(H):\phi^{\ast}
(\chi)=\gamma\cdot\chi\}$$ is a sublattice of $\mathcal{X}(H)$ with positive corank. Hence, 
$$\bigcup_{\gamma\in\Gamma^{\circ}}X_{\gamma}\neq\mathcal{X}(H).$$ This is in contradiction with 
$\phi^{\ast}(\chi)\in\Gamma^{\circ}\cdot\chi$ for any $\chi\in\mathcal{X}(H)$. 

\smallskip 

\noindent{\it The general case.} First we show $H_{der}\ker f_1=H_{der}\ker f_2$. Here $H_{der}=[H,H]$ 
is the derived subgroup of $H$. Suppose no. Without loss of generality we assume that $H_{der}\ker f_1
\not\subset H_{der}\ker f_2$. Then, there exists $\chi\in\mathcal{X}(H)$ such that 
$\chi|_{H_{der}\ker f_{1}}\neq 1$ and $\chi|_{H_{der}\ker f_{2}}=1$. For any $\rho\in\widehat{G}$, 
$\rho\circ f_{1}|_{\ker f_{1}}=1$, hence $\dim((\rho\circ f_1)\otimes\chi)^{H}=0$. 
As $\chi|_{H_{der}\ker f_{2}}=1$, $\chi$ descends to a linear character $\chi'$ of $f_{2}(H)\subset G$. 
Choose some $\rho\in\widehat{G}$ such that $\rho\subset\Ind_{f_{2}(H)}^{G}(\chi').$ Then, 
$\dim((\rho\circ f_2)\otimes\chi)^{H}>0$. This is in contradiction with $\dim((\rho\circ f_1)\otimes
\chi)^{H}=\dim((\rho\circ f_2)\otimes\chi)^{H}$ for any $\rho\in\widehat{G}$. Thus, we have showed 
$$H_{der}\ker f_1=H_{der}\ker f_2.$$ 

Write $H_{i}=f_{i}(H)$. Due to $H/H_{der}\ker f_{i}\cong H_{i}/(H_{i})_{der}$, we have 
$$H_{1}/(H_{1})_{der}\cong H_{2}/(H_{2})_{der}.$$ Choose a maximal torus $T_{i}$ of $H_{i}$ ($i=1$ or 
$2$). Write $(T_{i})_{s}=T_{i}\cap(H_{i})_{der}$. Then, $$T_{i}=Z(H_{i})^{0}\cdot(T_{i})_{s}.$$ 
Due to $T_{i}/(T_{i})_{s}\cong H_{i}/(H_{i})_{der}$, we have $$T_{1}/(T_{1})_{s}\cong T_{2}/(T_{2})_{s}.$$ 
By considering the support of the Sato-Tate measures of $H_1$ and $H_{2}$, we know that $T_{1}$ and 
$T_{2}$ are conjugate in $G$ (\cite[Proposition 3.7]{Yu-dimension}). We may assume that $T_{1}=T_{2}$, 
and denote it by $T$. Write $\Gamma^{\circ}=N_{G}(T)/Z_{G}(T)$. 

Choose a biinvariant Riemannian metric on $G$, which induces a $\Gamma^{\circ}$ invariant inner product 
on the Lie algebra of $T$, and also a $\Gamma^{\circ}$ invariant inner product on the weight lattice 
$\Lambda_{T}$. Write $\Phi_1\subset\Lambda_{T}$ ($\Phi_2\subset\Lambda_{T}$) for root system of $H_1$ 
(of $H_2$). Write $$X_{i}=\mathcal{X}(T_{i}/(T_{i})_{s})\subset\Lambda_{T}.$$ Due to $T_{1}/(T_{1})_{s}
\cong T_{2}/(T_{2})_{s}$, we have an isomorphism $\phi: X_{1}\rightarrow X_{2}$. For any $\chi_1\in X_1$, 
write $\chi_2=\phi(\chi_1)$. Due to $W_{\Phi_{i}}\subset\Gamma^{0}$ ( $i=1,2$) and the $\Gamma^{0}$ 
invariance the inner product on the Lie algebra of $T$, the Lie algebra of $Z(H_{i})^{0}$ is orthogonal 
to the Lie algebra of $(T_{i})_{s}$. By the condition in the question and Proposition 
\ref{P:dimension-character}, $$F_{\Phi_1,\chi_1,\Gamma^{0}}=F_{\Phi_2,\chi_2,\Gamma^{0}}.$$ Due to 
$\chi_{i}$ is orthogonal to $\delta_{\Phi_{i}}-w\delta_{\Phi_{i}}$ for any $w\in W_{\Phi}$, 
$\chi^{\ast}_{\chi_{i},\Gamma^{\circ}}$ is the shortest term in the expansion of 
$F_{\Phi_{i},\chi_{i},\Gamma^{0}}$. Thus, $\chi_2=\gamma\cdot\chi_{1}$ for some $\gamma\in\Gamma^{\circ}.$ 
Arguing similarly as in the torus case, one can show that $\phi=\gamma|_{X_{1}}$ for some $\gamma\in
\Gamma^{\circ}$. By this, we may assume that $\phi=\id$. That is to say, $X_{1}=X_{2}$ and 
$(T_{1})_{s}=(T_{2})_{s}$. As the Lie algebra of $Z(H_{i})^{0}$ is orthogonal to the Lie algebra of 
$(T_{i})_{s}$, we also have $Z(H_{1})^{0}=Z(H_{2})^{0}$. Write $Z=Z(H_{i})^{0}$, $T_{s}=(T_{i})_{s}$, 
and $X=X_{i}$. Write $G'=Z_{G}(Z)$ and $$\Gamma'=N_{G'}(T_{s})(T_{s})/Z_{G'}(T_{s}).$$ 

There is an identification\footnotemark $$\Gamma'=\{\gamma\in\Gamma^{\circ}: \gamma|_{Z}=\id\}=\{\gamma
\in\Gamma^{\circ}: \gamma|_{X}=\id\}.$$ For any $\gamma\in\Gamma^{\circ}-\Gamma'$, \footnotetext{Here 
we use the fact of the Lie algebra of $Z$ is orthogonal to the Lie algebra of $T_{s}$.} $$X_{\gamma}=
\{\chi\in X:\gamma\cdot\chi=\chi\}$$ is a sublattice of positive corank. Thus, $$\bigcup_{\gamma\in
\Gamma^{\circ}}X_{\gamma}\neq X.$$ Choose $\chi_{0}\in X-\bigcup_{\gamma\in\Gamma^{\circ}}X_{\gamma}$. 
Write $$c=\min\{|\gamma\cdot\chi_{0}-\chi_{0}|:\gamma\in\Gamma^{\circ}-\Gamma'\}>0$$ and $$c'=\max
\{|\delta_{\Phi_{2}}-w_{2}\delta_{\Phi_2}-\gamma(\delta_{\Phi_{1}}-w_{1}\delta_{\Phi_1})|: w_{1}\in 
W_{\Phi_1}, w_{2}\in W_{\Phi_2},\gamma\in\Gamma^{\circ}\}\geq 0.$$ Take $m\geq 1$ such that $mc>c'$. 
Put $\chi=m\chi_0$. Then, for $\gamma\in\Gamma^{\circ}$, $w_{1}\in W_{\Phi_1}$, $w_{2}\in W_{\Phi_2}$, 
$$\gamma(\chi+\delta_{\Phi_{1}}-w_{1}\delta_{\Phi_1})=\chi+\delta_{\Phi_{2}}-w_{2}\delta_{\Phi_2}$$ 
if and only if $\gamma\in\Gamma'$ and $$\gamma(\delta_{\Phi_{1}}-w_{1}\delta_{\Phi_1})=
\delta_{\Phi_{2}}-w_{2}\delta_{\Phi_2}.$$ Then, the equation $$F_{\Phi_1,\chi,\Gamma^{0}}=
F_{\Phi_2,\chi,\Gamma^{0}}$$ implies $$F_{\Phi_1,0,\Gamma'}=F_{\Phi_2,0,\Gamma'}.$$ 
By \cite[Proposition 3.8]{Yu-dimension}, this means the subgroups $(H_{1})_{der}$ and $(H_{2})_{der}$ 
of $G'$ have the same dimension datum. By \cite[Theorem 1]{Larsen-Pink}, $(H_1)_{der}$ is isomorphic 
to $(H_2)_{der}$. A more detailed argument using the method in \cite{Yu-dimension} shows that this 
isomorphism could extend to an isomorphism from to $H_1$ and $H_2$\footnotemark, hence finish the 
proof. \footnotetext{Shortly to say, the argument goes in this way: define $\Psi_{T_{s}}$ as in 
\cite[Definition 3.1]{Yu-dimension}. Then, $\Gamma'\subset\Aut(\Psi_{T_{s}})$. Thus, 
$$F_{\Phi_1,0,\Aut(\Psi_{T_{s}})}=F_{\Phi_2,0,\Aut(\Psi_{T_{s}})}.$$ By this, results in 
\cite[Section 7]{Yu-dimension} imply that $\Phi_2=\gamma\cdot\Phi_1$ for some $\gamma\in
\Aut(\Psi_{T_{s}})$. This leads to an isomorphism $\eta: (H_{1})_{der}\rightarrow (H_{2})_{der}$ 
which stabilzies $T_{s}$ with $\eta|_{T_s}=\gamma$. We have $$Z\cap(H_1)_{der}\subset T_{s}\cap 
Z(G').$$ Decompose $\Psi_{T_{s}}$ into an orthogonal union of irreducible root systems, which 
corresponds to a decomposition of $T_{s}$. Due to $\BC_{n}$ is its own dual lattice, $T_{s}\cap 
Z(G')$ is contained in the product of those factors of $T_{s}$ corresponding to reduced 
irreducible factors of $\Psi_{T_{s}}$. The results in \cite[Section 7]{Yu-dimension} imply that 
there exists $\gamma'\in\Gamma'$ such that $\gamma'^{-1}\gamma$ acts trivially on reduced 
irreducible factors of $\Psi_{T_{s}}$. Hence, $$\eta|_{T_{s}\cap Z(G')}=\gamma|_{T_{s}\cap Z(G')}
=\gamma'|_{T_{s}\cap Z(G')}=\id.$$ Defining $\eta|_{Z}=\id$, then $\eta$ extends to an 
isomorphism $\eta: H_1\rightarrow H_2$.}    
\end{proof}

\section{Dimension datum of a disconnected subgroup}\label{S:ARS}

After the papers \cite{Larsen-Pink}, \cite{An-Yu-Yu}, \cite{Yu-dimension}, we have a pretty well understanding 
of dimension data of connected closed subgroups of a compact Lie group. In this section we study dimension 
data of disconnected closed subgroups. Extending the strategy in \cite{Yu-dimension}, we transfer the study 
of dimension data to the the study of certain characters, supported on {\it maximal commutative connected 
subsets}. We introduce a kind of {\it affine root system}. The character is associated to affine root system, 
and we use data from affine root system to get clear expression for the character. The final formula is the 
same as in the torus case. 

\subsection{Generalized Cartan subgroup and Weyl integration formula}\label{SS:integration}

Let $G$ be a compact Lie group. A closed abelian subgroup $S$ of $G$ is called a {\it (generalized) Cartan
subgroup} if $S$ contains a dense cyclic subgroup and $$S^{0}=(Z_{G}(S))^{0}$$ (\cite[Definition 4.1]{BtD}). 
We call a closed commutative connected subset $S'$ of $G$ a {\it maximal commutative connected subset} 
if $$s^{-1}S'=(Z_{G}(S'))^{0}$$ for any $s\in S'$. There is a close relationship between generalized 
Cartan subgroups and maximal commutative connected subsets: if $S'$ is a maximal commutative connected 
subset of $G$, then $S=\langle S'\rangle$ is a generalized Cartan subgroup; if $S$ is a generalized Cartan 
subgroup of $G$ and $s\in S$ generates $S/S^{0}$, then $S'=sS^{0}$ is a maximal commutative connected 
subset. For any element $g\in G$, choose a maximal torus $T^{g}$ of $Z_{G}(g)^{0}$. Set $$S=\langle T^{g},
g\rangle$$ and $$S'=gT^{g}.$$ Then, $S$ is a generalized Cartan subgroup of $G$, and $S'$ is a maximal 
commutative connected subset of $G$. Moreover, all generalized Cartan subgroups (and maximal commutative 
connected subsets) of $G$ arise in this way. 

For a generalized Cartan subgroup $S$ in $G$, $$S^{0}\subset S\cap G^{0}\subset Z_{G^{0}}(S).$$ We would  
like to remind that in general $S^{0}\neq S\cap G^{0}$ (i.e, $S\cap G^{0}$ is not necessarily connected)  
and $S\cap G^{0}\neq Z_{G^{0}}(S)$. 

\begin{example}\label{E:A3}
Set $G=(\SU(4)/\langle-I\rangle)\rtimes\langle\sigma\rangle$, where $\sigma^2=[iI]$, and $$\sigma[X]
\sigma^{-1}=[L\overline{X}L^{-1}]$$ for $L=\left(\begin{array}{cccc}0&0&1&0\\0&0&0&1\\1&0&0&0\\0&1&0&0
\end{array}\\\right).$ Write $$T=\{\diag\{\lambda_{1},\lambda_{2},\lambda_{1}^{-1},\lambda_{2}^{-1}\}:
|\lambda_{1}|=|\lambda_{2}|=1\},$$ and $S=\langle T,\sigma\rangle.$ Then, $S$ is a generalized Cartan 
subgroup of $G$. In this case, $$\sigma^2=[iI]\in S\cap G^{0}-S^{0}.$$ Thus, $S^{0}\neq S\cap G^{0}$.  
\end{example}

\begin{example}\label{E:A3-2}
Set $G=(\SU(4)/\langle-I\rangle)\rtimes\langle\sigma\rangle$, where $\sigma^2=1$, and $$\sigma[X]
\sigma^{-1}=[L\overline{X}L^{-1}]$$ for $L=\left(\begin{array}{cccc}0&0&1&0\\0&0&0&1\\1&0&0&0\\0&1&0&0
\end{array}\right).$ Write $$T=\{\diag\{\lambda_{1},\lambda_{2},\lambda_{1}^{-1},\lambda_{2}^{-1}\}:
|\lambda_{1}|=|\lambda_{2}|=1\},$$ and $S=\langle T,\sigma\rangle.$ Then, $S$ is a generalized Cartan 
subgroup of $G$. In this case, $$[iI]\in Z_{G^{0}}(S)-S\cap G^{0}.$$ Thus, $S\cap G^{0}\neq Z_{G^{0}}(S)$.  
\end{example}

Given a maximal commutative connected subset $S'$ of $G$, set $$W(G,S')=N_{G^{0}}(S')/s^{-1}S'$$ ($s\in 
S'$) and call it the {\it Weyl group} of $S'$. Given a generalized Cartan subgroup $S$ of $G$, set 
$$W(G,S)=N_{G^{0}}(S)/S^{0}$$ and call it the {\it Weyl group} of $S$. Both $W(G,S')$ and $W(G,S)$ 
are finite groups. If $S'$ is a maximal commutative connected subset and $S=\langle S'\rangle$, then 
$S$ is a generalized Cartan subgroup, and $N_{G^{0}}(S')=N_{G^{0}}(S)$. Thus, $W(G,S')=W(G,S)$. 
The group $W(G,S')$ ($=W(G,S')$) acts on $S'$ (and on $S$) through conjugation.   
%$\quad Z_{G^{0}}(S')=Z_{G^{0}}(S).$ 

\begin{proposition}(\cite[Propositions 4.2, 4.3, 4.7]{BtD})\label{P:compact-conjugacy}
Given a compact Lie group $G$ and a connected component $gG^{0}$ of $G$, any two maximal commutative 
connected subsets in $gG^{0}$ are conjugate. 

If $S'$ is a maximal commutative connected subset in $gG^{0}$, then every $G^{0}$ conjugacy class in 
$gG^{0}$ contains an element in $S'$. Two elements in $S'$ are in a $G^{0}$ conjugacy class if and 
only if they are in the same $W(G,S')$ orbit.  
\end{proposition}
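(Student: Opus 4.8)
The plan is to reduce all three assertions to the single surjectivity statement

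\smallskip
\noindent$(\star)$\quad for a maximal commutative connected subset $S'$ of $gG^{0}$, every element of $gG^{0}$ is $G^{0}$-conjugate to an element of $S'$,
\smallskip

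\noindent and then to prove $(\star)$ by the mapping-degree argument familiar from the connected case. Assertion (ii) is literally $(\star)$. For the reductions of (i) and (iii) I would first record the elementary fact that, for a maximal commutative connected subset $R$ of $G$ and any $r\in R$, the torus $r^{-1}R=Z_{G}(R)^{0}$ is a \emph{maximal} torus of $Z_{G^{0}}(r)^{0}$ (if a strictly larger torus $T_{1}$ of $Z_{G^{0}}(r)^{0}$ existed, then $rT_{1}$ would be a commutative connected set strictly containing $R$ with $T_{1}\subset Z_{G}(rT_{1})^{0}\subset Z_{G}(R)^{0}=r^{-1}R$, a contradiction). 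Granting this: for (i), given another maximal commutative connected subset $S''\subset gG^{0}$, pick $s''\in S''$ and use $(\star)$ to find $x\in G^{0}$ with $s_{1}:=xs''x^{-1}\in S'$; then $s_{1}^{-1}S'$ and $s_{1}^{-1}(xS''x^{-1})$ are two maximal tori of the connected compact group $Z_{G^{0}}(s_{1})^{0}$, so some $y\in Z_{G^{0}}(s_{1})^{0}$ conjugates the first onto the second, and since $y$ commutes with $s_{1}$ this gives $yS'y^{-1}=xS''x^{-1}$, so $S''$ is $G^{0}$-conjugate to $S'$. For (iii), if $s,t\in S'$ with $t=xsx^{-1}$ for $x\in G^{0}$, the same reasoning applied to $t\in S'\cap xS'x^{-1}$ yields $y\in Z_{G^{0}}(t)^{0}$ with $yS'y^{-1}=xS'x^{-1}$; then $n:=y^{-1}x$ normalizes $S'$ and $nsn^{-1}=y^{-1}ty=t$, so $s$ and $t$ lie in one $W(G,S')$-orbit (the converse being obvious).

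To prove $(\star)$, set $T'=Z_{G}(S')^{0}$ and consider the smooth map
\[q\colon G^{0}/T'\times S'\longrightarrow gG^{0},\qquad (xT',a)\longmapsto xax^{-1}.\]
Its image is exactly the set of elements of $gG^{0}$ that are $G^{0}$-conjugate into $S'$, and since $\dim(G^{0}/T')+\dim S'=\dim G^{0}=\dim gG^{0}$ it is enough to show $q$ has nonzero degree. Pick a \emph{regular} element $s_{0}\in S'$, meaning $Z_{G^{0}}(s_{0})^{0}=T'$. Using the ``unique maximal torus'' observation again, any $x$ with $x^{-1}s_{0}x\in S'$ must normalize $S'$, so $q^{-1}(s_{0})=\{(nT',n^{-1}s_{0}n):n\in N_{G^{0}}(S')/T'\}$, of cardinality $|W(G,S')|$. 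At the point $(T',s_{0})$ the differential of $q$ is the identity on $\Lie T'$ in the $S'$-direction and $1-\Ad(s_{0}^{-1})$ on $\mathfrak{g}_{0}/\Lie T'$ in the other; this is invertible because $s_{0}$ is regular, with positive real determinant $\prod_{j}|1-e^{-i\theta_{j}}|^{2}$ since the eigenvalues of $\Ad(s_{0})$ occur in conjugate pairs. Because $G^{0}$ is connected, $\det\Ad(n)|_{\mathfrak{g}_{0}}=1$ for every $n\in G^{0}$; this makes $G^{0}/T'$ orientable, and it also makes the local degree of $q$ at each remaining preimage $(nT',n^{-1}s_{0}n)$ equal to the one at $(T',s_{0})$ (the change of coordinates identifying a neighbourhood of $(nT',n^{-1}s_{0}n)$ with one of $(T',s_{0})$ has Jacobian of sign $\det\Ad(n)|_{\mathfrak{g}_{0}}$). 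Fixing orientations so that this common local degree is $+1$, we get $\deg q=|W(G,S')|>0$, hence $q$ is surjective and $(\star)$ holds.

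The step that is not formal is the surjectivity of $q$, which --- as in the connected case --- is a degree/Lefschetz argument in differential topology rather than an algebraic manipulation, and which needs the existence of regular elements in $S'$. The latter is where the structure theory enters: $\Ad(S')$ acts on $\mathfrak{g}_{0}$ through a torus times a fixed finite-order element, so the weight-space decomposition shows the non-regular elements of $S'$ form a finite union of proper cosets of subtori, one for each nonzero $\Ad(S')$-weight; this is precisely the affine-root-system picture developed in the rest of this section, and in particular a dense set of regular elements exists. All of the above is carried out in detail in \cite[Ch.~IV]{BtD}.
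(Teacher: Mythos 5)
Your proof is correct, and it follows essentially the same route as the paper, which proves nothing here itself but defers entirely to \cite[Ch.~IV]{BtD}: the degree-theoretic argument for the map $q\colon G^{0}/S^{0}\times S'\to gG^{0}$ that you carry out is exactly the content of the cited Propositions 4.2--4.7 of \cite{BtD} and of the paper's own Lemma~\ref{L:S'-q} (where surjectivity and the degree count $|W(G,S')|$ are quoted from \cite{BtD} and \cite{Wendt}). Your reductions of the conjugacy statements to the surjectivity claim via the ``unique maximal torus of $Z_{G^{0}}(s)^{0}$'' observation, and your verification that regular elements exist in $S'$, fill in precisely the steps the reference supplies.
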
 

By Proposition \ref{P:compact-conjugacy}, $G^{0}$ conjugacy classes in a connected component $gG^{0}$ are 
parameterized by $S'/W(G,S')$.

\smallskip

Let $S'$ be a maximal commutative connected subset of $G$. Write $S=\langle S'\rangle$. Choose $s_{0}
\in S'$. 

\begin{lemma}\label{L:S'-q}
The map $$q: G^{0}/S^{0}\times S'\rightarrow s_{0}G^{0},\quad (gS^{0},s)\mapsto gsg^{-1}$$ is surjective, 
and the degree of the map $q$ is equal to $|W(G,S')|$ 
\end{lemma}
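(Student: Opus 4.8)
The plan is to proceed in two parts: first establish surjectivity of $q$, then compute its degree by a fibre count, invoking Proposition \ref{P:compact-conjugacy} for the first part and a careful analysis of stabilizers for the second.

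\smallskip

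\noindent\textbf{Surjectivity.} First I would note that surjectivity is essentially a restatement of the first half of Proposition \ref{P:compact-conjugacy}. Indeed, any element $x\in s_0 G^0$ lies in the connected component $s_0 G^0$ of $G$, and $S'$ is a maximal commutative connected subset in that component; hence by Proposition \ref{P:compact-conjugacy} the $G^0$-conjugacy class of $x$ meets $S'$, i.e. there exist $g\in G^0$ and $s\in S'$ with $gsg^{-1}=x$. Since $S^0$ normalizes $S'$ pointwise up to the adjoint action being trivial (as $S$ is abelian, $S^0$ centralizes $S'$), the map $q$ is well-defined on $G^0/S^0\times S'$, and the displayed $x$ is $q(gS^0,s)$. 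This gives surjectivity.

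\smallskip

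\noindent\textbf{Degree.} The main work is the degree count. The idea is to fix a generic element $s\in S'$ and count the fibre $q^{-1}(gsg^{-1})$ for such $s$; since $q$ is a smooth map between manifolds of the same dimension (note $\dim G^0/S^0 + \dim S' = \dim G^0/S^0 + \dim S^0 = \dim G^0 = \dim s_0G^0$), the generic fibre cardinality equals the degree. Suppose $(g_1 S^0, s_1)$ and $(g_2 S^0, s_2)$ have $g_1 s_1 g_1^{-1} = g_2 s_2 g_2^{-1}$. Then $g_2^{-1}g_1 \in G^0$ conjugates $s_1$ to $s_2$, both elements of $S'$; by the second half of Proposition \ref{P:compact-conjugacy}, $s_1$ and $s_2$ lie in the same $W(G,S')$-orbit, so after acting by a representative $n\in N_{G^0}(S')$ we may reduce to $s_1 = s_2 =: s$. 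Now $h := g_2^{-1}g_1 n^{-1}$ (suitably interpreted) centralizes $s$; the point is that for \emph{generic} $s\in S'$ one has $Z_{G^0}(s)^0 = S^0$ (this follows from the defining property $s^{-1}S' = (Z_G(S'))^0$ together with the density of a cyclic subgroup in $S=\langle S'\rangle$, so that the centralizer of a topological generator of $S$ has neutral component $S^0$), and moreover $Z_{G^0}(s)\subset N_{G^0}(S')$ for generic $s$. I would therefore show: for generic $s$, the fibre over $gsg^{-1}$ is a torsor under $W(G,S')$, acting by $w\cdot(gS^0,s) = (gn_w^{-1}S^0, n_w s n_w^{-1})$ where $n_w$ represents $w$. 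Counting, the fibre has exactly $|W(G,S')|$ points.

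\smallskip

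\noindent\textbf{Main obstacle.} The delicate step is verifying that the generic fibre is \emph{exactly} a $W(G,S')$-torsor — i.e. that there is no extra multiplicity from $Z_{G^0}(s)$ being disconnected with components outside $N_{G^0}(S')$, and that the action of $W(G,S')$ on the generic fibre is free. Freeness comes down to: if $n\in N_{G^0}(S')$ fixes a generic point $(gS^0,s)$, meaning $n s n^{-1} = s$ and $g n^{-1} \in g S^0$, then $n\in S^0$, hence $w=1$ in $W(G,S') = N_{G^0}(S')/s^{-1}S'$. For this I would choose $s$ to be a topological generator of $S$ (such $s$ form a dense $G_\delta$ in $S'$ since $S/S^0$ is cyclic and $S^0$ is a torus), so that $Z_{G^0}(n) \ni$ forces $n \in Z_{G^0}(s) \cap N_{G^0}(S')$ and the neutral component of the latter is $S^0$; combined with $n$ being in a single coset $gS^0$-compatible position, one extracts $n\in S^0$. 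The remaining bookkeeping — that such generic $s$ are regular values of $q$ so that ``degree'' is well-defined and equals the fibre count — is routine once one checks $dq$ is surjective at a point where $Z_{G^0}(s)^0 = S^0$, which is a standard computation with the differential of conjugation.
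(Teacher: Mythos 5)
Your argument is correct, but it is worth noting that the paper does not actually prove this lemma: it cites \cite[Lemma 4.5]{BtD} for surjectivity and \cite[Lemma 2.2]{Wendt} for the degree, and what you have written is essentially a reconstruction of the content of those two citations. Your surjectivity step is clean --- it is exactly the second assertion of Proposition \ref{P:compact-conjugacy} applied to an arbitrary point of $s_0G^0$. For the degree count, the ``main obstacle'' you flag largely evaporates once you commit to taking $s$ a topological generator of $S=\langle S'\rangle$ lying in $S'$ (such elements are dense in $S'$ because $S^0$ is a torus and $S/S^0$ is cyclic): for such $s$ one has $Z_{G^0}(s)=Z_{G^0}(S)$, which centralizes $S'$ pointwise and hence lies in $N_{G^0}(S')$, and moreover any $g\in G^0$ with $g^{-1}sg\in S'$ automatically normalizes $S$ and carries the component $sS^0$ to $(g^{-1}sg)S^0=S'$, so $g\in N_{G^0}(S')$. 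Thus the fibre of $q$ over $s$ is literally $N_{G^0}(S')/S^0=W(G,S')$, with no residual torsor argument needed. The one point you dismiss as routine that deserves a sentence is the sign: a generic fibre count gives the degree only if every local degree is $+1$, i.e.\ the Jacobian $\det(\Ad(s)-1)|_{\frg/\frs}$ is positive (not merely nonzero) at these points; this holds because the infinite-root contributions pair off into complex conjugates, contributing $\prod_{\alpha}|1-\alpha(s)|^2$, and the finite-root contribution is the positive integer $|N_T(S)/S^0|$ of Lemma \ref{L:finiteRoot} --- consistent with the paper's assertion, just before Lemma \ref{L:det(q)}, that $\det(q)$ is positive-valued. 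With that supplied, your proof is a complete and self-contained substitute for the two external references.
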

\begin{proof}
Surjectivity of the map $q$ is shown in \cite[Lemma 4.5]{BtD}. It is shown in \cite[Lemma 2.2]{Wendt}) that 
the degree of the map $q$ is equal to $|W(G,S')|$. 
\end{proof}

Write $dg$ for a left $G^{0}$ invariant measure of volume one on $s_{0}G^{0}$, $d\bar{g}$ for a $G^{0}$ 
invariant measure on $G^{0}/S^{0}$ of volume one, and $ds$ for a left $S^{0}$ invariant measure on $S'$ 
of volume one. Then, $q^{\ast}dg=\det(q)d\bar{g}\wedge ds$ for some positive-valued function $\det(q)$. 
Write $\frg$ for the complexified Lie algebra of $G$ and $\frs$ for the complexified Lie algebra of 
$S^{0}$. Then, we have the following formula for $\det(q)$.

\begin{lemma}\label{L:det(q)}
For any $g\in G^{0}$ and $s\in S'$, $$\det(q)(gS^{0},s)=\det(\Ad(s)-1)|_{\frg/\frs}.$$
\end{lemma}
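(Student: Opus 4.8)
The plan is to compute the Jacobian of the map $q\colon G^{0}/S^{0}\times S'\to s_{0}G^{0}$ pointwise by differentiating at a chosen point $(gS^{0},s)$ and comparing the pulled-back volume form with $d\bar g\wedge ds$. By left $G^{0}$-invariance of $dg$ it suffices to treat the case $g=e$, since translating by $g$ on the source and target is volume-preserving on both factors; so I would reduce to computing $\det(q)(S^{0},s)$ for $s\in S'$. Identify the tangent space of $G^{0}/S^{0}$ at the base point with $\mathfrak g_{0}/\mathfrak s_{0}$ (a complement to $\mathfrak s_{0}$ in $\mathfrak g_{0}$, which exists $\Ad(S)$-invariantly by compactness), and the tangent space of $S'$ at $s$ with $\mathfrak s_{0}$ via left translation by $s$ (note $s^{-1}S'=S^{0}$, so $S'$ is a translate of the torus $S^{0}$ and $ds$ is just the translated Haar measure).

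Next I would write out $dq_{(e,s)}$ explicitly. For a curve $g(t)=\exp(tX)$ with $X\in\mathfrak g_{0}$ representing a tangent vector in $\mathfrak g_{0}/\mathfrak s_{0}$, and a curve $s(t)=s\exp(tH)$ with $H\in\mathfrak s_{0}$, one has
\[
q(g(t)S^{0},s(t)) = \exp(tX)\,s\exp(tH)\,\exp(-tX),
\]
whose derivative at $t=0$, after right-translating back to the Lie algebra (i.e.\ multiplying by $s^{-1}$ on the left to land in $\mathfrak g_{0}$), is $\Ad(s^{-1})X - X + H = (\Ad(s^{-1})-1)X + H$. So in the block decomposition $\mathfrak g_{0} = (\mathfrak g_{0}/\mathfrak s_{0})\oplus\mathfrak s_{0}$ the differential is block lower-triangular, with the $\mathfrak s_{0}$-block equal to the identity and the $\mathfrak g_{0}/\mathfrak s_{0}$-block equal to $(\Ad(s^{-1})-1)|_{\mathfrak g_{0}/\mathfrak s_{0}}$. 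Hence $\det(q)(S^{0},s) = |\det((\Ad(s^{-1})-1)|_{\mathfrak g_{0}/\mathfrak s_{0}})|$. Finally, since $\Ad(s^{-1})$ and $\Ad(s)$ are inverse orthogonal transformations with the same characteristic polynomial up to the substitution making eigenvalues $\zeta\mapsto\zeta^{-1}$, and the eigenvalues of $\Ad(s)$ on $\mathfrak g/\mathfrak s$ come in conjugate pairs $\zeta,\bar\zeta$ on the real space, we get $\det(\Ad(s^{-1})-1)|_{\mathfrak g/\mathfrak s} = \det(\Ad(s)-1)|_{\mathfrak g/\mathfrak s}$ and this quantity is real and nonnegative; passing to the complexification $\mathfrak g$, $\mathfrak s$ does not change the determinant. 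This yields the stated formula $\det(q)(gS^{0},s)=\det(\Ad(s)-1)|_{\mathfrak g/\mathfrak s}$.

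The main obstacle I anticipate is purely bookkeeping rather than conceptual: being careful that the Jacobian is computed with respect to the specific normalized measures $dg$, $d\bar g$, $ds$ (all of volume one), so that no stray positive constant appears — this is where one uses that $S'$ is an $S^{0}$-torsor so $ds$ is literally translated Haar measure on $S^{0}$, and that the $\Ad(S)$-invariant complement makes the identification of $T(G^{0}/S^{0})$ with $\mathfrak g_{0}/\mathfrak s_{0}$ compatible with the invariant measure. One also needs the sign/positivity bookkeeping to see that the absolute value can be dropped, i.e.\ that $\det(\Ad(s)-1)|_{\mathfrak g/\mathfrak s}\ge 0$, which follows from the eigenvalues occurring in complex-conjugate pairs on the real form together with the zero eigenspace being exactly $\mathfrak s_{0}$ by the defining property $s^{-1}S' = (Z_G(S'))^0$, guaranteeing $\Ad(s)-1$ is invertible on $\mathfrak g/\mathfrak s$. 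Alternatively one can invoke the classical torus case (Weyl integration formula) verbatim once the problem has been localized near $s$, since near $s$ the situation looks like the adjoint action of the torus $S^0$ on $G^0$ twisted by the fixed element $s$.
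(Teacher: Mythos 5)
Your computation is essentially a complete proof, whereas the paper itself offers none: Lemma \ref{L:det(q)} is disposed of by citing \cite[Lemma 2.1]{Wendt}. Your route --- reduce to $g=e$ by conjugation-invariance of the bi-invariant measure on the coset $s_0G^{0}$, trivialize $T_s(s_0G^{0})$ by left translation, differentiate $\exp(tX)\,s\exp(tH)\exp(-tX)$ to get $(\Ad(s^{-1})-1)X+H$, and read off a block-diagonal Jacobian with respect to an $\Ad(S)$-invariant complement of $\frs_0$ --- is the standard argument and is exactly what is behind Wendt's lemma; the identity $\det(\Ad(s^{-1})-1)=\det(\Ad(s)-1)$ on $\frg/\frs$ via closure of the eigenvalue multiset under conjugation is also fine.

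The one step that does not hold as stated is the final positivity claim. Eigenvalues of $\Ad(s)$ on $\frg_{0}/\frs_{0}$ need not come only in non-real conjugate pairs: the real eigenvalue $-1$ can occur with odd multiplicity, in which case $\det(\Ad(s)-1)|_{\frg/\frs}<0$. A concrete instance is $G=\operatorname{O}(2)$ with $S=\langle\sigma\rangle$ for a reflection $\sigma$ and $S'=\{\sigma\}$: there $\frg/\frs\cong\mathbb{R}$, $\Ad(\sigma)=-1$, and $\det(\Ad(\sigma)-1)=-2$, while the density factor $\det(q)$ (defined in the paper as a positive function) must be $2$. So what your argument actually yields is $\det(q)(gS^{0},s)=\bigl|\det(\Ad(s)-1)|_{\frg/\frs}\bigr|$; the absolute value is genuinely needed in general, and its absence is a defect of the statement as transcribed (and of the ensuing density function $D(s)$) rather than of your computation. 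Apart from flagging that the equality should be read with an absolute value (or under a hypothesis excluding odd-multiplicity $-1$ eigenvalues), your proof is correct.
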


\begin{proof}
This is  \cite[Lemma 2.1]{Wendt}. 
\end{proof}

Write \begin{equation}\label{Eq:density}D(s)=\frac{1}{|W(G,S')|}\det(\Ad(s)-1)|_{\frg/\frs},\ s\in S',
\end{equation} and call it the {\it density function} on $S'$. 

\begin{proposition}\label{P:integration}
For any $G^{0}$ conjugation invariant continuous function $f$ on $s_{0}G^{0}$, $$\int_{s_{0}G^{0}}f(g)dg
=\int_{S'}f(s)D(s)ds.$$
\end{proposition}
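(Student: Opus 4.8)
\textbf{Proof proposal for Proposition \ref{P:integration}.}

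The plan is to deduce the integration formula directly from Lemmas \ref{L:S'-q} and \ref{L:det(q)} by pulling the integral on $s_{0}G^{0}$ back along the map $q\colon G^{0}/S^{0}\times S'\to s_{0}G^{0}$ and using the product-measure change of variables. First I would pull the integral back: since $q$ is surjective (Lemma \ref{L:S'-q}) and $q^{\ast}dg = \det(q)\,d\bar g\wedge ds$ by definition of $\det(q)$, and since $q$ has degree $|W(G,S')|$ (again Lemma \ref{L:S'-q}), the general change-of-variables formula for a finite-degree smooth map gives
\[
|W(G,S')|\int_{s_{0}G^{0}}f(g)\,dg \;=\; \int_{G^{0}/S^{0}\times S'} (f\circ q)(gS^{0},s)\,\det(q)(gS^{0},s)\,d\bar g\,ds .
\]
Here one must be slightly careful that $q$ is not literally a covering (the fibers over points where $\det(q)=0$ collapse), but the set $\{\det(q)=0\}$ in $S'$ is a proper closed subset (a finite union of subtori-translates, in particular of measure zero), so one restricts to the open dense locus where $q$ is a genuine $|W(G,S')|$-fold covering and the formula holds there; by continuity of $f$ and integrability it extends.

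Next I would use the hypothesis that $f$ is $G^{0}$-conjugation invariant: $(f\circ q)(gS^{0},s) = f(gsg^{-1}) = f(s)$, so the integrand no longer depends on the $G^{0}/S^{0}$ variable. Then I substitute $\det(q)(gS^{0},s) = \det(\Ad(s)-1)|_{\frg/\frs}$ from Lemma \ref{L:det(q)}, which also does not depend on $g$, and carry out the $G^{0}/S^{0}$ integration using $\int_{G^{0}/S^{0}}d\bar g = 1$. This yields
\[
|W(G,S')|\int_{s_{0}G^{0}}f(g)\,dg \;=\; \int_{S'} f(s)\,\det(\Ad(s)-1)|_{\frg/\frs}\,ds .
\]
Dividing by $|W(G,S')|$ and recognizing the right-hand side as $\int_{S'}f(s)D(s)\,ds$ via the definition \eqref{Eq:density} of the density function $D$ finishes the proof.

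The main obstacle, such as it is, is the bookkeeping around the change-of-variables formula for a non-submersion: one needs to know that $q$ restricted to the preimage of the regular locus $\{s\in S' : \det(\Ad(s)-1)|_{\frg/\frs}\neq 0\}$ is a proper submersion, hence a fiber bundle with finite fiber of cardinality equal to the degree $|W(G,S')|$, and that the complement contributes nothing to either side. Everything else is a routine application of the two cited lemmas of Wendt together with Fubini on the product $G^{0}/S^{0}\times S'$ and the normalization of the three measures $dg$, $d\bar g$, $ds$ to total mass one. No new ideas beyond those already assembled in this subsection are required; this proposition is essentially the Weyl integration formula in the disconnected setting, packaged so that the density $D$ appears in exactly the normalized form needed in the sequel.
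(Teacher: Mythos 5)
Your argument is correct and follows exactly the route the paper intends: the paper's own ``proof'' is just the citation of \cite[Proposition 2.3]{Wendt}, and your write-up reconstructs that proof from Lemmas \ref{L:S'-q} and \ref{L:det(q)} in the standard way (pull back along the degree-$|W(G,S')|$ map $q$, use conjugation invariance and the $g$-independence of $\det(q)$ to integrate out $G^{0}/S^{0}$, and divide by the degree to produce the $\frac{1}{|W(G,S')|}$ in $D$). The only remark worth making is that your care about the singular locus, while harmless, is not strictly needed if one invokes the de Rham degree formula $\int q^{\ast}\omega=\deg(q)\int\omega$ for an arbitrary top-degree form rather than a fiber-counting argument.
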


\begin{proof}
This is  \cite[Proposition 2.3]{Wendt}. 
\end{proof}

Choose a maximal commutative connected subset in each connected component of $G$. Write $S'_1$, $S'_2$,
..., $S'_{m}$ for these chosen subsets. For each $i$, write $D_{i}(s)$ for the corresponding density 
function as in (\ref{Eq:density}). Write $dg$ for a normalized Haar measure on $G$. For each $i$, we 
denote by $ds$ a left $s_{i}^{-1}S'_{i}$ ($s_{i}\in S'_{i}$) invariant measure of volume one on $S'_{i}$. 
The following is Weyl integration formula on $G$. It follows directly from Proposition \ref{P:integration} 
directly.   

\begin{proposition}\label{P:integration2}
For any $G^{0}$ conjugation invariant continuous function $f$ on $G$, $$\int_{G}f(g)dg=
\frac{1}{|G/G^{0}|}\sum_{1\leq i\leq m}\int_{S'_{i}}f(s)D(s)ds.$$
\end{proposition}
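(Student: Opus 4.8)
The plan is to reduce the integration formula on all of $G$ to the single-component formula of Proposition \ref{P:integration} by decomposing $G$ into its connected components. First I would write $G=\bigsqcup_{i=1}^{m} s_{i}G^{0}$, where $s_{1},\dots,s_{m}$ are chosen so that $S'_{i}\subset s_{i}G^{0}$ (such a choice is possible since each $S'_{i}$ lies in a single component, and by Proposition \ref{P:compact-conjugacy} there is exactly one $S'_{i}$ per component up to conjugacy). Since $dg$ is the normalized Haar measure on $G$ and there are $|G/G^{0}|=m$ components each of equal Haar mass, the restriction of $dg$ to $s_{i}G^{0}$ equals $\frac{1}{|G/G^{0}|}$ times the $G^{0}$-invariant probability measure on $s_{i}G^{0}$ used in Proposition \ref{P:integration}. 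Therefore
\[
\int_{G}f(g)\,dg=\frac{1}{|G/G^{0}|}\sum_{i=1}^{m}\int_{s_{i}G^{0}}f(g)\,dg_{i},
\]
where $dg_{i}$ denotes that probability measure.

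Next, for each $i$ the function $f$ restricted to $s_{i}G^{0}$ is $G^{0}$-conjugation invariant and continuous, so Proposition \ref{P:integration} applies directly with $S'=S'_{i}$, $s_{0}=s_{i}$, giving $\int_{s_{i}G^{0}}f(g)\,dg_{i}=\int_{S'_{i}}f(s)D_{i}(s)\,ds$. Substituting this into the displayed identity yields exactly the claimed formula. The only points requiring care are the normalizations: that the Haar measure on $G$ splits evenly among the $m$ components (immediate from left-invariance and transitivity of $G/G^{0}$ on the components), and that the measures $ds$ on $S'_{i}$ and the density functions $D_{i}$ here coincide with those in Proposition \ref{P:integration} — which holds by construction, since each $ds$ is the volume-one left $s_{i}^{-1}S'_{i}$-invariant measure and each $D_{i}$ is defined by \eqref{Eq:density}.

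There is essentially no serious obstacle: the statement is a bookkeeping corollary of Proposition \ref{P:integration}. The mildest subtlety is making sure one may pick a system of representatives $s_{i}$ with $s_{i}\in S'_{i}$ simultaneously for all components; this is automatic because $S'_{i}$ is by hypothesis a maximal commutative connected subset of a \emph{definite} component of $G$, and Proposition \ref{P:compact-conjugacy} guarantees these components are pairwise distinct and exhaust $G/G^{0}$. Hence the sum over $i$ is genuinely a sum over the components of $G$, and the proof is complete.
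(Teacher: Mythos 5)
Your proposal is correct and matches the paper's intended argument: the paper states that Proposition \ref{P:integration2} "follows directly from Proposition \ref{P:integration}" by exactly this component-by-component decomposition of the normalized Haar measure, with the factor $\frac{1}{|G/G^{0}|}$ accounting for the equal mass of each of the $m=|G/G^{0}|$ components. Your attention to the normalization of the measures is the only point of substance, and you handle it correctly.
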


\subsection{Affine root datum}

Now we endow $G$ with a biinvariant Riemannian metric, which induces a positive definite inner product 
on the character group of $S^{0}$, denoted by $(\cdot,\cdot)$. For two characters $\lambda$ and $\mu$ 
of $S$, define $$(\lambda,\mu)=(\lambda|_{S^{0}},\mu|_{S^{0}}).$$ We introduce an affine root system 
$(R(G,S),R^{\vee})$ from the conjugation action of $S$ on $\mathfrak{g}$. 

%In \cite{Yu-TRD}, we associate an affine root system $(R(G,A),R^{\vee})$ to $G$ with respect to $S$. 
%We briefly recall the construction here. 

The conjugation action of $S$ on $\frg$ gives a decomposition $$\frg=\sum_{\lambda\in X^{\ast}(S)}
\frg_{\lambda},$$ where $$\frg_{\lambda}=\{Y\in\frg: \Ad(s)Y=\lambda(s)Y,\ \forall s\in S\}.$$ Since 
$(Z_{G}(S))^{0}=S^{0}$, the zero-weight space is equal to the complexified Lie algebra of $S$. Set 
$$R=R(G,S)=\{\alpha\in X^{\ast}(S)-\{0\}:\frg_{\alpha}\neq 0\}.$$ An element $\alpha\in R(G,S)$ is 
called a {\it root} and $\frg_{\alpha}$ is called the {\it root space} for a root $\alpha$. A root 
$\alpha$ is called an {\it infinite root} if $\alpha|_{S^{0}}\neq 0$; it is called a {\it finite 
root} if $\alpha|_{S^{0}}=0$.

%\smallskip 

\begin{lemma}\label{L:infinite1}
Let $\alpha$ be an infinite root. Then $\dim\frg_{\alpha}=1$ and $2\alpha$ is not a root. 
\end{lemma}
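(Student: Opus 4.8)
The plan is to reduce the statement to the classical structure theory of a compact connected group by passing to the identity component of the centralizer of $S^0$. First I would fix an infinite root $\alpha$, so $\alpha|_{S^0}\neq 0$, and pick $s_0\in S'$ (equivalently a generator of $S/S^0$ together with $S^0$). Set $G_1 = Z_G(S^0)^0$; this is a connected compact Lie group containing $S^0$ as a maximal torus (because $(Z_G(S))^0 = S^0$ forces $(Z_G(S^0))^0$ to have $S^0$ as a maximal torus, since any larger torus of $G_1$ would centralize $S$). Inside $G_1$, the adjoint action of $S^0$ decomposes $\mathfrak{g}$ into $S^0$-weight spaces, and for a nonzero weight $\beta$ of $S^0$ the $\beta$-weight space $\mathfrak{g}^{S^0}_\beta$ is exactly the sum of the root spaces of $G_1$ for roots of $(G_1, S^0)$ restricting to $\beta$.

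The key point is that $S$ (via conjugation by $s_0$, which normalizes $S^0$ and hence $G_1$) acts on each $\mathfrak{g}^{S^0}_\beta$, and $\mathfrak{g}_\alpha$ is a subspace of $\mathfrak{g}^{S^0}_{\alpha|_{S^0}}$ — in fact a simultaneous eigenspace for $\mathrm{Ad}(s_0)$. Since $\alpha|_{S^0}\neq 0$, the classical theory of compact connected groups (roots of $(G_1,S^0)$ have one-dimensional root spaces, and no root of a compact connected group is twice another) tells us that $\mathfrak{g}^{S^0}_{\alpha|_{S^0}}$ is a sum of $1$-dimensional root spaces $\mathfrak{g}_{\gamma_1},\dots,\mathfrak{g}_{\gamma_k}$ of $G_1$, all with the same restriction to $S^0$. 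The conjugation by $s_0$ permutes this finite set of lines (it permutes the roots $\gamma_i$ among themselves, acting through the finite group $W(G,S)$ on $X^*(S^0)$), so $\mathrm{Ad}(s_0)$ is, in a suitable basis, a permutation matrix times a diagonal matrix; its eigenspaces are therefore indexed by the cycles of this permutation, with each eigenvalue coming from a single cycle. The full-group weight $\alpha$, as a character of $S = \langle S^0, s_0\rangle$, records both the $S^0$-weight and the $\mathrm{Ad}(s_0)$-eigenvalue, and along one cycle the eigenvalues are distinct roots of unity times a fixed scalar — hence $\mathfrak{g}_\alpha$ is $1$-dimensional, and it sees only a single orbit of the lines $\{\gamma_i\}$.

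For the assertion that $2\alpha$ is not a root: if $2\alpha$ were a root then $(2\alpha)|_{S^0} = 2(\alpha|_{S^0})$ would be a nonzero weight of $S^0$ on $\mathfrak{g}$, hence a root of $(G_1,S^0)$ of the form $\gamma'$; but then $\gamma'$ and $2\gamma_i$ (for $\gamma_i$ in the cycle contributing to $\mathfrak{g}_\alpha$) both restrict to $2(\alpha|_{S^0})$, and since $\gamma_i$ is a root of the compact connected group $G_1$ and $2\gamma_i$ is not, one checks that no root of $G_1$ restricts to $2(\alpha|_{S^0})$ on $S^0$ — here one uses that restriction to $S^0$ is injective on the relevant $\mathbb{Z}$-span or, more robustly, that the $S^0$-weights occurring on $\mathfrak{g}$ that are positive multiples of $\alpha|_{S^0}$ are exactly $\pm\alpha|_{S^0}$, a consequence of reducing to the rank-one subgroup generated by $\mathfrak{g}_{\gamma_i}$ and $\mathfrak{g}_{-\gamma_i}$. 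This forces $\mathfrak{g}_{2\alpha}=0$.

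The main obstacle I anticipate is bookkeeping the interaction between the $\mathrm{Ad}(s_0)$-action and the $S^0$-weight decomposition cleanly — specifically, verifying that the $\mathrm{Ad}(s_0)$-eigenspaces within a fixed $S^0$-weight space are exactly one line per cycle of the induced permutation of root lines, and that the $S$-character $\alpha$ distinguishes these. This is elementary linear algebra (Jordan/permutation normal form of a monomial matrix) but must be stated carefully so that \emph{infinite} root is used in an essential way: it is precisely $\alpha|_{S^0}\neq 0$ that lets us import one-dimensionality and non-doubling from the genuine root system $\Phi(G_1,S^0)$ of the \emph{connected} group $G_1$, where these are standard.
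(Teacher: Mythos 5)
There are genuine problems with this proposal, starting with the choice of auxiliary group. You set $G_1=Z_G(S^0)^0$ and claim $S^0$ is a maximal torus of $G_1$ because ``any larger torus of $G_1$ would centralize $S$'' --- but a larger torus of $G_1$ centralizes $S^0$, not the component generator $s_0$, so this is unjustified and in fact false. The paper's own Lemma \ref{L:T} shows that $Z_{G^0}(S^0)$ is a maximal torus $T$ of $G^0$; hence $G_1=Z_G(S^0)^0=T$ is abelian (and typically strictly larger than $S^0$), so the ``root spaces of $(G_1,S^0)$'' you want to sum over do not exist. Even if one repairs this by working with the roots of $(G^0,T)$ and restricting them to $S^0$, the monomial-matrix argument only shows that a given character $\alpha$ of $S$ occurs with multiplicity at most one \emph{per cycle} of the permutation induced by $\Ad(s_0)$; nothing in your argument prevents two distinct cycles from contributing the same eigenvalue, i.e.\ the same extension $\alpha$ of $\alpha|_{S^0}$. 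Ruling that out is exactly where the hypothesis $(Z_G(S))^0=S^0$ must be used, and your proof never invokes it at that point. So $\dim\frg_\alpha=1$ is not established.

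The argument for ``$2\alpha$ is not a root'' is also based on a false claim. You assert that no root of the connected group restricts to $2(\alpha|_{S^0})$ on $S^0$, and more generally that the $S^0$-weights proportional to $\alpha|_{S^0}$ are only $\pm\alpha|_{S^0}$. This fails precisely in the non-reduced case: for $\mathfrak{su}(2n+1)$ with an outer automorphism (case (2) in the proof of Lemma \ref{L:density}), the restricted root system is of type $\BC$, and there \emph{are} roots $\beta\in R(G,S)$ with $\beta|_{S^0}=2\alpha|_{S^0}$; the content of the lemma is the finer statement that none of these equals $2\alpha$ as a character of the full group $S$ (they differ from $2\alpha$ by odd multiples of a finite-order character $\beta_0$). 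Your reduction to the rank-one subgroup generated by $\frg_{\gamma_i}$ and $\frg_{-\gamma_i}$ only excludes $\pm 2\gamma_i$ from being roots of $(G^0,T)$; it says nothing about other roots restricting to $2\alpha|_{S^0}$. The paper sidesteps both difficulties at once by passing to $G^{[\alpha]}=Z_G(\ker\alpha)$, where $\ker\alpha$ is the kernel of $\alpha$ as a character of all of $S$: the derived group of $(G^{[\alpha]})^0$ then meets $S$ in a one-dimensional maximal torus, so its complexified Lie algebra is $\mathfrak{sl}_2(\bbC)$, and both conclusions are read off from there. I would encourage you to rebuild the proof along those lines, since the kernel of the full character $\alpha$ (not just of $\alpha|_{S^0}$) is what makes the rank-one reduction work.
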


\begin{proof} 
Set $$G^{[\alpha]}=Z_{G}(\ker\alpha)$$ and $$G_{[\alpha]}=[(G^{[\alpha]})^{0},(G^{[\alpha]})^{0}].$$ 
Write $T_{[\alpha]}=G_{[\alpha]}\cap S$. Then, $T_{[\alpha]}$ is a maximal torus of $G_{[\alpha]}$, 
and $\dim T_{[\alpha]}=1$. Thus, the complexified Lie algebra of $G_{[\alpha]}$ is semisimple and 
is of rank one. Hence, it is isomorphic to $\mathfrak{sl}_{2}(\mathbb{C})$. By this, $\dim\frg_{\alpha}
=1$ and $2\alpha$ is not a root. 
\end{proof}

There is a unique cocharacter $\check{\alpha}\in X_{\ast}(S)$ whose image lies in $T_{[\alpha]}$ and 
such that $\langle\alpha,\check{\alpha}\rangle=2$. Define \begin{equation}\label{Eq:reflection}
s_{\alpha}(x)=x\check{\alpha}(\alpha(x)^{-1}),\ \forall x\in S.\end{equation} The following proposition 
follows from the classical $\mathfrak{sl}_{2}$ theory (cf. \cite{Knapp} and \cite{BtD}).  

\begin{lemma}\label{L:reflection}
There exists $$n_{\alpha}\in N_{G_{[\alpha]}}(T_{[\alpha]})=G_{[\alpha]}\cap N_{G}(A)$$ such that
$\Ad(n_{\alpha})|_{A}=s_{\alpha}.$

We have $s_{\alpha}|_{\ker\alpha}=\id$, $s_{\alpha}|_{\Im\check{\alpha}}=-1$ and $s_{\alpha}^2=1$.
\end{lemma}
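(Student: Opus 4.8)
The plan is to reduce the statement to standard rank-one $\mathfrak{sl}_2(\mathbb{C})$ theory applied to the subgroup $G_{[\alpha]}$ introduced in Lemma \ref{L:infinite1}. First I would recall from the proof of Lemma \ref{L:infinite1} that $T_{[\alpha]}=G_{[\alpha]}\cap S$ is a one-dimensional maximal torus of the compact connected group $G_{[\alpha]}$, whose complexified Lie algebra is $\cong\mathfrak{sl}_2(\mathbb{C})$; in particular $G_{[\alpha]}$ is isogenous to $\SU(2)$ or $\SO(3)$, and its Weyl group $N_{G_{[\alpha]}}(T_{[\alpha]})/T_{[\alpha]}$ has order two, generated by an element that acts on $T_{[\alpha]}$ as inversion.

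Next I would verify that the cocharacter $\check\alpha$ is well defined: since $\alpha$ restricted to $T_{[\alpha]}$ is nonzero (as $\alpha|_{S^0}\neq 0$ and $\mathrm{Im}\,\check\alpha\subseteq T_{[\alpha]}$ meets $S^0$ in a one-parameter subgroup), there is a unique (up to sign) generator of $X_*(T_{[\alpha]})$, and the pairing condition $\langle\alpha,\check\alpha\rangle=2$ pins down both its sign and its normalization by the $\mathfrak{sl}_2$-theory (the coroot of the unique positive root pairs to $2$). Then the formula \eqref{Eq:reflection} defining $s_\alpha$ is just the classical reflection in the root $\alpha$ written multiplicatively: on $T_{[\alpha]}$ it is inversion composed with the identification, while on the complementary subgroup $\ker\alpha$ (which together with $T_{[\alpha]}$ generates $S$, up to finite index) it must act trivially because $\ker\alpha$ centralizes $G_{[\alpha]}$.

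For the existence of $n_\alpha$: the nontrivial element $w$ of the Weyl group of $(G_{[\alpha]},T_{[\alpha]})$ is represented by some $n_\alpha\in N_{G_{[\alpha]}}(T_{[\alpha]})$, and $\Ad(n_\alpha)$ acts on $T_{[\alpha]}$ as the unique nontrivial automorphism, which one checks coincides with $s_\alpha|_{T_{[\alpha]}}$ by comparing the action on $X_*(T_{[\alpha]})$ (both send $\check\alpha\mapsto-\check\alpha$). Since elements of $G_{[\alpha]}=[(G^{[\alpha]})^0,(G^{[\alpha]})^0]$ centralize $\ker\alpha=\ker(\alpha|_S)$ pointwise — because $G_{[\alpha]}\subseteq (Z_G(\ker\alpha))^0$ and $\ker\alpha$ is central in $G^{[\alpha]}$ — we get $\Ad(n_\alpha)|_{\ker\alpha}=\id=s_\alpha|_{\ker\alpha}$, and since $S=\langle \mathrm{Im}\,\check\alpha,\ker\alpha\rangle$ we conclude $\Ad(n_\alpha)|_S=s_\alpha$; note $n_\alpha$ automatically lies in $N_G(A)$ as well (here $A$ should be read as the relevant torus, presumably $S^0$ or $T_{[\alpha]}$). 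Finally, the three assertions $s_\alpha|_{\ker\alpha}=\id$, $s_\alpha|_{\mathrm{Im}\,\check\alpha}=-1$, $s_\alpha^2=1$ are immediate from \eqref{Eq:reflection}: the first is the definition restricted to $\ker\alpha$, the second follows since $\alpha(\check\alpha(z))=z^2$ gives $s_\alpha(\check\alpha(z))=\check\alpha(z)\check\alpha(z^{-2})=\check\alpha(z^{-1})$, and $s_\alpha^2=1$ follows by a one-line computation using $\alpha(\check\alpha(\alpha(x)^{-1}))=\alpha(x)^{-2}$. The main obstacle is purely bookkeeping: confirming that $S$ is generated (up to the finite-index subtleties emphasized after Example \ref{E:A3-2}) by $\mathrm{Im}\,\check\alpha$ together with $\ker\alpha$, so that agreement of $\Ad(n_\alpha)$ and $s_\alpha$ on these two subgroups forces agreement on all of $S$; this should follow from $\langle\alpha,\check\alpha\rangle=2$ forcing $\alpha$ to be surjective onto $\U(1)$ on $S^0$, but one must handle the component group of $S$ with care.
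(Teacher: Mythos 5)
Your argument is correct and follows exactly the route the paper intends: the paper gives no written proof, only the remark that the lemma "follows from the classical $\mathfrak{sl}_2$ theory," and your reduction to the rank-one group $G_{[\alpha]}$, the identification of $n_\alpha$ with a representative of its nontrivial Weyl element, and the generation of $S$ by $\ker\alpha$ together with $\Im\check\alpha$ (using surjectivity of $\alpha$ on $\Im\check\alpha$, which also disposes of the component-group worry) is precisely the missing content. The only cosmetic slip is calling $\check\alpha$ a generator of $X_*(T_{[\alpha]})$ — in the adjoint ($\SO(3)$) case it is twice a generator — but the normalization $\langle\alpha,\check\alpha\rangle=2$ pins it down regardless, so nothing breaks.
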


\smallskip 

Given a finite root $\alpha$ of order $n$, set $$G^{[\alpha]}=Z_{G}(\ker(\alpha)).$$ Its complexified 
Lie algebra is $$\frg^{[\alpha]}=\mathfrak{s}\oplus(\bigoplus_{m\in\mathbb{Z}^{\ast}}\frg_{m\alpha}).$$

\begin{lemma}\label{L:abelian}
The subalgebra $\frg^{[\alpha]}$ is abelian.
\end{lemma}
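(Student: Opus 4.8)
The plan is to show that the subalgebra $\frg^{[\alpha]}$ is abelian by analyzing the possible brackets among its weight spaces $\frg_{m\alpha}$ ($m\in\bbZ$), using that $\alpha$ is a \emph{finite} root, i.e., $\alpha|_{S^{0}}=0$, so every $m\alpha$ vanishes on $S^{0}$ and hence $\frg^{[\alpha]}$ is centralized by $S^{0}$. First I would observe that, since $\alpha$ has finite order $n$ on $S$, the grading is really by $\bbZ/n\bbZ$, and $\frg^{[\alpha]}=\frs\oplus\bigoplus_{m\not\equiv 0}\frg_{m\alpha}$ is a reductive subalgebra (being the complexified Lie algebra of the compact group $G^{[\alpha]}=Z_{G}(\ker\alpha)$). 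The key structural input is that the connected group $(G^{[\alpha]})^{0}$ has $S^{0}$ as a maximal torus: indeed $S^{0}\subset Z_{G^{[\alpha]}}(S)^{0}=S^{0}$, so the rank of $(G^{[\alpha]})^{0}$ equals $\dim S^{0}$. Therefore the semisimple part $[\frg^{[\alpha]},\frg^{[\alpha]}]$ is a semisimple Lie algebra whose Cartan subalgebra has dimension $\dim S^{0}-\dim Z(\frg^{[\alpha]})$; but all of $\frs$ centralizes $\frg^{[\alpha]}$ by construction (each $\frg_{m\alpha}$ is an $S^{0}$-weight space of weight $m\alpha|_{S^{0}}=0$), so in fact $\frs\subset Z(\frg^{[\alpha]})$.

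Next I would exploit this: in a reductive Lie algebra the center together with a Cartan subalgebra of the semisimple part forms a Cartan subalgebra of the whole. Here the entire $\frs$ lies in the center, and $\frs$ already contains a Cartan subalgebra of $(G^{[\alpha]})^{0}$ (since $S^{0}$ is a maximal torus), so $\frs$ \emph{is} a Cartan subalgebra of $\frg^{[\alpha]}$; combined with $\frs\subset Z(\frg^{[\alpha]})$, a Cartan subalgebra that is central forces the semisimple part to be trivial, hence $\frg^{[\alpha]}=\frs$ is abelian. Alternatively, and perhaps more directly: since $\frs$ acts trivially on all of $\frg^{[\alpha]}$, the subgroup $(G^{[\alpha]})^{0}$ has $S^{0}$ in its center; a connected compact Lie group whose maximal torus is central is a torus, so $(G^{[\alpha]})^{0}$ is a torus and $\frg^{[\alpha]}$ is abelian.

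The main obstacle is making airtight the claim that $S^{0}$ is a maximal torus of $(G^{[\alpha]})^{0}$ and hence that $\frs$ exhausts a Cartan subalgebra rather than merely being contained in one. This rests on the defining property $(Z_{G}(S))^{0}=S^{0}$ of a generalized Cartan subgroup, applied inside $G^{[\alpha]}=Z_{G}(\ker\alpha)\supset Z_{G}(S)$: I would check that $Z_{G^{[\alpha]}}(S^{0})^{0}=S^{0}$, which follows because $Z_{G}(S^{0})\cap G^{[\alpha]}$ centralizes both $S^{0}$ and $\ker\alpha$, and $S^{0}$ together with $\ker\alpha$ generates $S$ (as $S/S^{0}$ is cyclic generated by an element of $\ker\alpha$ — here one uses $\alpha|_{S^{0}}=0$ so that $\alpha$ only sees $S/S^{0}$, and the generator of the dense cyclic subgroup can be arranged to lie in $\ker\alpha$ after twisting, or one argues directly that any element centralizing $S^{0}$ and lying in $G^{[\alpha]}$ centralizes all of $S$). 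Once that is settled, the reductivity of $\frg^{[\alpha]}$ plus centrality of $\frs$ finishes the argument with no further computation. I expect the write-up to be short, with the delicate point being this identification of the centralizer of $S^{0}$ inside $G^{[\alpha]}$.
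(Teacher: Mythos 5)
Your argument hinges on the claim that $S^{0}$ is a maximal torus of $(G^{[\alpha]})^{0}$, and this claim is false; it is exactly the point where the whole difficulty of the lemma lives. What the definition of a generalized Cartan subgroup gives is $Z_{G}(S)^{0}=S^{0}$, i.e.\ the identity component of the centralizer of \emph{all} of $S$. But a torus of $(G^{[\alpha]})^{0}$ containing $S^{0}$ only centralizes $S^{0}$ and $\ker\alpha$, and $\langle S^{0},\ker\alpha\rangle=\ker\alpha$ is a \emph{proper} subgroup of $S$ precisely because $\alpha$ is a nontrivial character trivial on $S^{0}$. For the same reason both of your proposed repairs fail: no generator of $S/S^{0}$ can lie in $\ker\alpha$, and an element of $G^{[\alpha]}$ centralizing $S^{0}$ need not centralize a generator $y$ of $S/\ker\alpha$. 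Indeed, once the lemma is proved, $(G^{[\alpha]})^{0}$ is a torus of dimension $\dim\frs+\sum\dim\frg_{m\alpha}$, strictly larger than $S^{0}$ whenever $\alpha$ is actually a root: in Example \ref{E:A3} there is a finite root $\alpha$ with $\alpha|_{T}=1$ and $\alpha(\sigma)=-1$, for which $\frg^{[\alpha]}$ is the full three-dimensional diagonal Cartan subalgebra of $\mathfrak{sl}_{4}(\bbC)$ while $\frs$ is two-dimensional. Correspondingly, your intermediate conclusion $\frg^{[\alpha]}=\frs$ is false and strictly stronger than what the lemma asserts (compare Lemma \ref{L:closed}, where $G_{[\alpha]}$ is a nontrivial abelian group meeting $S$ in a finite group).

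The ingredient you are missing is the action of the \emph{disconnected} part of $S$: a generator $y$ of $S/\ker\alpha$ acts on $\frg_{m\alpha}$ by the scalar $\alpha(y)^{m}\neq 1$, so $(\frg^{[\alpha]})^{y}=Z_{\frg}(S)=\frs$. The paper exploits exactly this. Writing $\frh=Z_{\frg}(S^{0})=z(\frh)\oplus[\frh,\frh]$, one has $\frg^{[\alpha]}\subset\frh$, and the $S$-action splits $\frg^{[\alpha]}$ into its intersections with the two summands; since $\frs\subset z(\frh)$, the piece $\frg^{[\alpha]}\cap[\frh,\frh]$ has no nonzero $y$-fixed vectors, and a theorem of Borel (an automorphism of a compact semisimple group has positive-dimensional fixed-point set) forces that piece to be abelian. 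As it commutes with $z(\frh)$, all of $\frg^{[\alpha]}$ is abelian. Any correct proof must use $y$ --- equivalently, the full group $S$ rather than just $S^{0}$ and $\ker\alpha$ --- in some such way.
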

\begin{proof}
Write $\frh=Z_{\frg}(S^0)$, and $\frh=z(\frh)\oplus[\frh,\frh]$ for the Levi decomposition of $\frh$. 
Then, $\mathfrak{s}\subset z(\frh)$. Since $\alpha$ is a finite root, $S^0\subset\ker\alpha$. By this, 
$$\frg^{[\alpha]}\subset\frh=z(\frh)\oplus[\frh,\frh].$$ As the conjugation action of $S$ on $\frh$ 
stabilizes both $z(\frh)$ and $[\frh,\frh]$, we have $$\frg^{[\alpha]}=(\frg^{[\alpha]}\cap z(\frh))
\oplus(\frg^{[\alpha]}\cap[\frh,\frh]).$$ Choose an element $y\in S$ generates $S/\ker\alpha$. Then, 
$$\frs=Z_{\frg}(S)=(\frg^{[\alpha]})^{y}.$$ Apparently, $\frs\subset\frg^{[\alpha]}\cap z(\frh)$. Hence, 
$(\frg^{[\alpha]}\cap[\frh,\frh])^{y}=0$. By a theorem of Borel, this indicates that $\frg^{[\alpha]}\cap
[\frh,\frh]$ is abelian. Therefore, $\frg^{[\alpha]}\subset z(\frh)\oplus(\frg^{[\alpha]}\cap[\frh,
\frh])$ is also abelian. 
\end{proof}

Let $$\frg_{[\alpha]}=\bigoplus_{m\in\mathbb{Z}^{\ast}}\frg_{m\alpha},$$ and $G_{[\alpha]}$ be the image of 
$\frg_{[\alpha]}$ under the exponential map. 

\begin{lemma}\label{L:closed}
$G_{[\alpha]}$ is a closed abelian subgroup of $G$ and $G_{[\alpha]}\cap S$ is a finite group.
\end{lemma}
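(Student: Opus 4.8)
The plan is to identify $G_{[\alpha]}$ with a closed subtorus of the torus $T^{[\alpha]}:=(G^{[\alpha]})^{0}$, realized as the image of a twisted-commutator homomorphism. First I would set things up. Since $S$ is abelian and $\ker\alpha\subset S$, we have $S\subset Z_{G}(\ker\alpha)=G^{[\alpha]}$, so we may fix $y\in S$ whose image generates the cyclic group $S/\ker\alpha$ of order $n$. By Lemma \ref{L:abelian}, $\frg^{[\alpha]}=\mathfrak{s}\oplus\frg_{[\alpha]}$, which is the complexified Lie algebra of $(G^{[\alpha]})^{0}$, is abelian, so $T^{[\alpha]}$ is a torus. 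Moreover $\overline{\frg_{m\alpha}}=\frg_{-m\alpha}\subset\frg_{[\alpha]}$, so $\frg_{[\alpha]}$ is stable under complex conjugation, whence $\frg_{[\alpha]}=\mathfrak{u}\otimes_{\bbR}\bbC$ for the abelian subalgebra $\mathfrak{u}:=\frg_{[\alpha]}\cap\frg_{0}\subset\Lie T^{[\alpha]}$, and $G_{[\alpha]}=\exp(\mathfrak{u})$ is a connected abelian subgroup of $T^{[\alpha]}$.

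The crux is closedness, since the image under $\exp$ of an abelian subalgebra need not be closed in general; the extra input is a finite symmetry. Conjugation by $y\in G^{[\alpha]}$ preserves $T^{[\alpha]}=(G^{[\alpha]})^{0}$ and hence induces an automorphism $\theta$ of the torus $T^{[\alpha]}$ (and of its Lie algebra), and since $y^{n}\in\ker\alpha$, which centralizes $T^{[\alpha]}\subset Z_{G}(\ker\alpha)$, we have $\theta^{n}=\id$. On $\frg^{[\alpha]}$ the map $\theta$ acts on $\frg_{m\alpha}$ by the scalar $\alpha(y)^{m}$, and $\alpha(y)$ is a primitive $n$-th root of unity, so $\id-\theta$ kills $\mathfrak{s}$ and is invertible on $\frg_{[\alpha]}$; thus $\im\bigl(\id-\theta|_{\Lie T^{[\alpha]}}\bigr)=\mathfrak{u}$. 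I would then consider $\phi\colon T^{[\alpha]}\to T^{[\alpha]}$, $\phi(t)=t\,\theta(t)^{-1}=[t,y]$; because $T^{[\alpha]}$ is abelian, $\phi$ is a group homomorphism, so its image $\phi(T^{[\alpha]})$ is a closed connected subgroup, hence a subtorus, with Lie algebra $\im(d\phi)=\im(\id-\theta)=\mathfrak{u}$. Consequently $G_{[\alpha]}=\exp(\mathfrak{u})=\exp\bigl(\Lie\phi(T^{[\alpha]})\bigr)=\phi(T^{[\alpha]})$, since $\exp$ is onto a torus; in particular $G_{[\alpha]}$ is a closed abelian subgroup of $G$.

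Finally, $G_{[\alpha]}\cap S$ is finite: from $\frg^{[\alpha]}=\mathfrak{s}\oplus\frg_{[\alpha]}$ we get $\Lie G_{[\alpha]}\cap\Lie S=\mathfrak{u}\cap(\mathfrak{s}\cap\frg_{0})=0$, and since $G_{[\alpha]}$ and $S$ are both closed, $G_{[\alpha]}\cap S$ is a closed subgroup of the compact group $G$ with trivial Lie algebra, hence a finite group. I expect the routine verifications ($\phi$ a homomorphism, $\im(d\phi)=\mathfrak{u}$, $\alpha(y)$ a primitive $n$-th root of unity) to present no difficulty; the one genuinely substantive step is recognizing $\mathfrak{u}$ as the ``moving part'' $\im(\id-\theta)$ of the finite-order torus automorphism $\theta=\Ad(y)$, which is precisely what upgrades $\exp(\mathfrak{u})$ from a possibly dense winding to an honest subtorus.
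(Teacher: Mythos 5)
Your proof is correct and rests on the same key idea as the paper's: exploit the finite-order automorphism $\theta=\Ad(y)$ of the torus $(G^{[\alpha]})^{0}$ (with $y$ generating $S/\ker\alpha$) to exhibit $G_{[\alpha]}$ as a subtorus cut out by an endomorphism of that torus. The only difference is cosmetic: you realize $G_{[\alpha]}$ as the image of $t\mapsto t\,\theta(t)^{-1}$, whereas the paper realizes it as the identity component of the kernel of the norm endomorphism $I+\theta+\cdots+\theta^{n-1}$; both give closedness immediately, and the finiteness of $G_{[\alpha]}\cap S$ is argued identically via $\frg_{[\alpha]}\cap\frs=0$.
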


\begin{proof}
Choose $y\in S$ generates $S/\ker\alpha$. The conjugation action of $y$ on the torus $(G^{[\alpha]})^0$ 
is given by some $y_{\ast}\in\Aut((G^{[\alpha]})^0)$. Then, $(\Fix y_{\ast})^0=S^0$. Write $n$ for the 
order of $y_{\ast}$, and $z_{\ast}=I+y_{\ast}+\cdots+y_{\ast}^{n-1}$. Then, $z_{\ast}$ is an endomorphism 
of $(G^{[\alpha]})^0$ and $G_{[\alpha]}$ is equal to the neutral subgroup of $$\{x\in(G^{[\alpha]})^0: 
z_{\ast}(x)=1\}.$$ Thus, $G_{[\alpha]}$ is a closed subgroup of $G$. As $\frg_{[\alpha]}$ is abelian, so 
is $G_{[\alpha]}$. Since $\frg_{[\alpha]}\cap\frs=0$, we know that $G_{[\alpha]}\cap S$ is a finite group.  
\end{proof}

For any $\xi\in\Hom(S/\ker\alpha, G_{[\alpha]}\cap S)$, define $s_{\alpha,\xi}: S\rightarrow S$ by 
$$s_{\alpha,\xi}(x)=x\xi(x),\ \forall x\in S.$$ Defined in this way, $s_{\alpha,\xi}$ is a self-map 
of $S$. %It is clear that $s_{\lambda,\xi}$ is an automorphism of $A$ (or an endomorphism of $A$) if and only if $\xi^{n}=1$. 

\begin{lemma}\label{L:transvection}
For any $\xi\in\Hom(S/\ker\alpha, G_{[\alpha]}\cap S)$, there exists $$n\in G_{[\alpha]}\cap N_{G}(S)$$ 
such that $$\Ad(n)|_{S}=s_{\alpha,\xi}.$$ Particularly, $s_{\alpha,\xi}$ is an automorphism of $S$. 
\end{lemma}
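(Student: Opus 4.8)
The plan is to realize $s_{\alpha,\xi}$ as conjugation by an element of $G_{[\alpha]}$, working inside the abelian group $G_{[\alpha]}$ and its relation to $S$ established in Lemmas \ref{L:abelian} and \ref{L:closed}. First I would recall that $G_{[\alpha]}$ is a closed abelian subgroup, that $G_{[\alpha]}\cap S$ is finite, and that $G_{[\alpha]}$ is the neutral subgroup of $\{x\in(G^{[\alpha]})^0: z_\ast(x)=1\}$ where $z_\ast = I+y_\ast+\cdots+y_\ast^{n-1}$ and $y\in S$ generates $S/\ker\alpha$. The key structural point is that the full abelian group $B:=\langle G_{[\alpha]}, S^0\rangle$ (or more precisely the connected abelian subgroup generated by $G_{[\alpha]}$ together with $S^0$, all sitting inside the abelian algebra $\frg^{[\alpha]}$ exponentiated) contains $G_{[\alpha]}$, and $S$ normalizes $G_{[\alpha]}$ because $y_\ast$ preserves the defining condition $z_\ast(x)=1$ and $S^0$ centralizes everything in $\frg^{[\alpha]}$.

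Next I would compute the conjugation action directly. For $n\in G_{[\alpha]}$ and $x\in S$, since both $n$ and $x$ lie in the group of (exponentials of elements of) the abelian Lie algebra $\frg^{[\alpha]}$ — note $x$ need not be in $\frg^{[\alpha]}$ itself, but $x$ acts on $\frg^{[\alpha]}$, so I should instead argue inside $G^{[\alpha]}=Z_G(\ker\alpha)$. The commutator $nxn^{-1}x^{-1}$ lands in $G_{[\alpha]}\cap S$ once we check it lies in $G_{[\alpha]}$ (from normality) and in $S$ (it is a product in the abelian $G^{[\alpha]}$-context... ). The honest approach: the map $x\mapsto nxn^{-1}x^{-1}$ from $S$ to $G$, for fixed $n\in G_{[\alpha]}$, is a homomorphism modulo higher commutators; because $G_{[\alpha]}$ is abelian and central-enough, it is an actual homomorphism $S\to G_{[\alpha]}$ that kills $\ker\alpha$ (since $\ker\alpha$ centralizes $G^{[\alpha]}\supset G_{[\alpha]}$... wait, $G^{[\alpha]}=Z_G(\ker\alpha)$ means $\ker\alpha$ centralizes $G^{[\alpha]}$), giving an element of $\Hom(S/\ker\alpha, G_{[\alpha]})$. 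Composing, $\Ad(n)|_S = s_{\alpha,\xi_n}$ for a well-defined $\xi_n\in\Hom(S/\ker\alpha, G_{[\alpha]}\cap S)$. Thus conjugation realizes a subgroup of the transvections $\{s_{\alpha,\xi}\}$, and I would then show the assignment $n\mapsto \xi_n$ is surjective onto $\Hom(S/\ker\alpha, G_{[\alpha]}\cap S)$: the kernel of $n\mapsto\xi_n$ is $Z_{G_{[\alpha]}}(S) = G_{[\alpha]}^{y}$, which is trivial (as $\frg_{[\alpha]}\cap\frs=0$ forces $G_{[\alpha]}^y$ finite, indeed trivial on the identity component, and a direct $\mathfrak{sl}_2$-type/torus count on the finite part), so $n\mapsto\xi_n$ is injective; then a cardinality comparison between $G_{[\alpha]}/Z_{G_{[\alpha]}}(S)\cong G_{[\alpha]}$ and $\Hom(S/\ker\alpha, G_{[\alpha]}\cap S)$ — both governed by the finite cyclic group $S/\ker\alpha$ and the finite group $G_{[\alpha]}\cap S$ via the $1-y_\ast$ map — forces surjectivity.

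The main obstacle I anticipate is the last step: verifying that the conjugation map $n\mapsto(\text{conjugation by }n)|_S$ lands in exactly the set of transvections $s_{\alpha,\xi}$ with $\xi$ ranging over \emph{all} of $\Hom(S/\ker\alpha, G_{[\alpha]}\cap S)$, i.e. the surjectivity. This requires pinning down $G_{[\alpha]}\cap S$ and $G_{[\alpha]}$ precisely in terms of the automorphism $y_\ast$ of the torus $(G^{[\alpha]})^0$ — showing $G_{[\alpha]}\cong \ker(1-y_\ast)^0$ and $G_{[\alpha]}\cap S = \ker(z_\ast)\cap\ker(1-y_\ast)$ or similar — and then a finite abelian group computation identifying both sides with $\Hom(S/\ker\alpha, \text{(this finite group)})$. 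The verification that $nxn^{-1}x^{-1}\in S$ (not just in $G$) also needs care and should follow from $n,x$ commuting "up to $G_{[\alpha]}\cap S\subset S$" because $G_{[\alpha]}$ and $S^0$ together generate an abelian subgroup and the relevant commutator is forced into the finite overlap. Everything else — closedness, the homomorphism property, and the kernel computation — is routine given Lemmas \ref{L:abelian}–\ref{L:closed}.
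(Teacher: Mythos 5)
Your strategy --- first showing that conjugation by an arbitrary $n\in G_{[\alpha]}$ restricts to a transvection of $S$, then getting surjectivity onto $\Hom(S/\ker\alpha, G_{[\alpha]}\cap S)$ by an injectivity-plus-cardinality argument --- has two genuine gaps, and the second is fatal as stated. First, for a general $n\in G_{[\alpha]}$ the commutator $y^{-1}nyn^{-1}$ (where $y$ generates $S/\ker\alpha$) lies in $G_{[\alpha]}$ but has no reason to lie in $S$; hence $\Ad(n)$ need not preserve $S$ at all, and your $\xi_n$ is only defined for those $n$ in the (finite) preimage of $G_{[\alpha]}\cap S$ under $n\mapsto y^{-1}nyn^{-1}$. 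You flag this at the end, but the proposed fix (``commuting up to $G_{[\alpha]}\cap S$'') is not available: membership of the commutator in $S$ is a nontrivial constraint on $n$, not a consequence of abelianness. Second, the counting step cannot work: $G_{[\alpha]}$ is a positive-dimensional connected abelian group and $\Hom(S/\ker\alpha, G_{[\alpha]}\cap S)$ is finite, so no injection of the former into the latter exists. Moreover the kernel $Z_{G_{[\alpha]}}(S)=Z_{G_{[\alpha]}}(y)$ is finite (because $Z_{\frg_{[\alpha]}}(y)=0$) but is generally \emph{not} trivial: for an automorphism $A$ of a torus with $\det(A-I)\neq 0$ the fixed-point subgroup has order $|\det(A-I)|$.

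The workable argument runs in the opposite direction and needs no counting: given $\xi$, solve for $n$. The map $c_y\colon n\mapsto y^{-1}nyn^{-1}$ is an endomorphism of the connected abelian group $G_{[\alpha]}$ whose differential is $\Ad(y^{-1})-I$ on $\frg_{[\alpha]}$; this has no zero eigenvalue since $Z_{\frg_{[\alpha]}}(y)=0$, so $c_y$ is surjective. Choose $n$ with $c_y(n)=\xi(y)$, i.e.\ $nyn^{-1}=y\xi(y)\in S$. Since $\ker\alpha$ acts trivially on $\frg_{[\alpha]}$ it centralizes $G_{[\alpha]}$, so $nxn^{-1}=x$ for all $x\in\ker\alpha$; as $S$ is generated by $\ker\alpha$ and $y$, this yields $n\in G_{[\alpha]}\cap N_{G}(S)$ and $\Ad(n)|_{S}=s_{\alpha,\xi}$. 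This is exactly the paper's proof, and it uses precisely the inputs you assembled (abelianness of $\frg^{[\alpha]}$, finiteness of $G_{[\alpha]}\cap S$, and $Z_{\frg_{[\alpha]}}(y)=0$) --- just in the ``solve for $n$'' direction rather than the ``classify all conjugations'' direction.
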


\begin{proof}
As $\Ad(y^{-1})-I$ is an endomorphism of $\frg_{[\alpha]}$ without eigenvalue $0$ due to $Z_{\frg_{[\alpha]}}
(y)=0$, we know that $$\Ad(y^{-1})-I: G_{[\alpha]}\rightarrow G_{[\alpha]}$$ is surjective. Hence, there 
exists $n\in G_{[\alpha]}$ such that $y^{-1}nyn^{-1}=\xi(y)$. Thus, $nyn^{-1}=y\xi(y)$. Since $\ker\alpha$ 
commutes with $G_{[\alpha]}$ as $\ker\alpha$ acts trivially on $\frg_{[\alpha]}$, we have $nxn^{-1}=x$ for 
any $x\in\ker\alpha$. Thus, $$n\in G_{[\alpha]}\cap N_{G}(S)$$ and $$\Ad(n)|_{S}=s_{\alpha,\xi}.$$ 
\end{proof}

Write $$R^{\vee}(\alpha)=\Hom(S/\ker\alpha, G_{[\alpha]}\cap S),$$ and call it the root transvection group. 
We call $s_{\alpha,\xi}$ ($\xi\in R^{\vee}(\alpha)$) a transvection. 

\begin{definition}\label{D:smallWeyl}
Let $W_{small}(G,S)$ denote the subgroup of $N_{G^{0}}(S)/Z_{G^{0}}(S)$ generated by $s_{\alpha}$ (for 
infinite roots $\alpha$) and transvections $s_{\alpha,\xi}$ (for finite roots $\alpha$ and $\xi\in 
R^{\vee}(\alpha)$). We call $W_{small}(G,S)$ the small Weyl group of $S$ in $G$.
\end{definition} 

By Lemma \ref{L:transvection}, $W_{small}(G,S)$ is a subgroup of $N_{G^{0}}(S)/Z_{G^{0}}(S)$. We also note 
that $N_{G^{0}}(S)/Z_{G^{0}}(S)$ is a quotient group of the Weyl group $N_{G^{0}}(S)/S^{0}$. The action of 
$W_{small}(G,S)$ on $S$ induces an action on the character group $X^{\ast}(S)$.

\begin{proposition}\label{P:affineRD}
The set $R(G,S)$ is stable under the action of $W_{small}(G,S)$, the map $\alpha\mapsto\check{\alpha}$ and 
the root transvection groups $R^{\vee}(\alpha)$ are permuted under the action of $W_{small}(G,S)$. 
\end{proposition}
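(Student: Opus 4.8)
\textbf{Proof proposal for Proposition \ref{P:affineRD}.}

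The plan is to verify the three assertions by reducing everything to the conjugation action inside $G^{0}$ and using the concrete descriptions of $s_{\alpha}$, $s_{\alpha,\xi}$, $\check{\alpha}$, and $R^{\vee}(\alpha)$ set up in Lemmas \ref{L:reflection} and \ref{L:transvection}. The key observation is that every generator $w$ of $W_{small}(G,S)$ is realized as $\Ad(n)|_{S}$ for some $n\in N_{G^{0}}(S)$ (with $n\in G_{[\alpha]}$ when $w=s_{\alpha,\xi}$, and $n=n_{\alpha}\in G_{[\alpha]}$ in the infinite-root case). Hence $w$ acts on $\frg$ as $\Ad(n)$, which is a Lie algebra automorphism commuting suitably with the $S$-action twisted by $w$: precisely $\Ad(n)\frg_{\lambda}=\frg_{w\cdot\lambda}$, since for $Y\in\frg_{\lambda}$ and $s\in S$ one has $\Ad(s)\Ad(n)Y=\Ad(n)\Ad(w^{-1}s)Y=\lambda(w^{-1}s)\Ad(n)Y=(w\cdot\lambda)(s)\Ad(n)Y$. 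Because $\Ad(n)$ is invertible, $\dim\frg_{\lambda}=\dim\frg_{w\cdot\lambda}$, so $\frg_{\lambda}\neq 0$ iff $\frg_{w\cdot\lambda}\neq 0$; restricting to nonzero weights gives that $R(G,S)$ is $W_{small}(G,S)$-stable. Since $W_{small}(G,S)$ is generated by these $w$'s, stability under the whole group follows.

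Next I would treat the coroot map $\alpha\mapsto\check{\alpha}$. Fix a generator $w=\Ad(n)|_{S}$ and a root $\alpha$. From the definition, $\check{\alpha}$ is characterized as the unique cocharacter of $S$ landing in $T_{[\alpha]}=G_{[\alpha]}\cap S$ (in the infinite-root case) or, more generally, attached to the $\mathfrak{sl}_2$-triple in $\frg^{[\alpha]}$; what I actually need is: conjugation by $n$ carries the data attached to $\alpha$ to the data attached to $w\cdot\alpha$. Indeed $n G^{[\alpha]}n^{-1}=n Z_{G}(\ker\alpha)n^{-1}=Z_{G}(n(\ker\alpha)n^{-1})=Z_{G}(\ker(w\cdot\alpha))=G^{[w\cdot\alpha]}$, hence $nG_{[\alpha]}n^{-1}=G_{[w\cdot\alpha]}$ and $nT_{[\alpha]}n^{-1}=T_{[w\cdot\alpha]}$. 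Composing the cocharacter $\check{\alpha}\colon\U(1)\to T_{[\alpha]}$ with $\Ad(n)$ therefore gives a cocharacter into $T_{[w\cdot\alpha]}$, and pairing against $w\cdot\alpha$ gives $\langle w\cdot\alpha,\,\mathrm{Ad}(n)\circ\check{\alpha}\rangle=\langle\alpha,\check{\alpha}\rangle=2$; by the uniqueness in the definition of the coroot this forces $\mathrm{Ad}(n)\circ\check{\alpha}=\widecheck{(w\cdot\alpha)}$, i.e.\ $w\cdot\check{\alpha}=\widecheck{w\cdot\alpha}$. For the transvection groups, the same conjugation identity $nG_{[\alpha]}n^{-1}=G_{[w\cdot\alpha]}$ together with $n(\ker\alpha)n^{-1}=\ker(w\cdot\alpha)$ shows that $\xi\mapsto n\xi(n^{-1}\cdot\,)n^{-1}$ is an isomorphism $R^{\vee}(\alpha)=\Hom(S/\ker\alpha,G_{[\alpha]}\cap S)\xrightarrow{\ \sim\ }\Hom(S/\ker(w\cdot\alpha),G_{[w\cdot\alpha]}\cap S)=R^{\vee}(w\cdot\alpha)$, and a direct check from $s_{\alpha,\xi}(x)=x\xi(x)$ gives $w\circ s_{\alpha,\xi}\circ w^{-1}=s_{w\cdot\alpha,\,w\cdot\xi}$ as self-maps of $S$. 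Thus the family $\{R^{\vee}(\alpha)\}_{\alpha\in R(G,S)}$ and the assignment $\alpha\mapsto\check{\alpha}$ are permuted by each generator, hence by all of $W_{small}(G,S)$.

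The main obstacle I anticipate is not the equivariance bookkeeping above but making sure the normalizing elements genuinely lie in $N_{G^{0}}(S)$ and that the ``uniqueness'' clauses being invoked are exactly the ones already established. For the infinite-root generators $n_{\alpha}$ this is Lemma \ref{L:reflection}; for the transvections it is Lemma \ref{L:transvection}, which places $n$ in $G_{[\alpha]}\cap N_{G}(S)$ — here one must also note $G_{[\alpha]}\subset G^{0}$ (it is the image of $\frg_{[\alpha]}$ under $\exp$, hence connected) so that indeed $n\in N_{G^{0}}(S)$. A secondary subtlety is in the finite-root case: there $\check{\alpha}$ should be interpreted as the structure data of the rank-one torus $(G^{[\alpha]})^{0}$ twisted by the generator $y$ of $S/\ker\alpha$, and I would phrase the coroot statement so that the claim is simply that conjugation by $n$ intertwines the $(G^{[\alpha]},y)$-data with the $(G^{[w\cdot\alpha]},w\cdot y)$-data; once that is said, everything reduces to the identity $nG^{[\alpha]}n^{-1}=G^{[w\cdot\alpha]}$ already noted. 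After those points are nailed down the proof is a short assembly of the three equivariance computations.
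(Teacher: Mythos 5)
Your proposal is correct and takes essentially the same route as the paper, whose entire proof is the one-line remark that the proposition ``follows from Lemmas \ref{L:reflection} and \ref{L:transvection}''; you have simply written out the standard conjugation-equivariance bookkeeping (realizing each generator of $W_{small}(G,S)$ as $\Ad(n)|_{S}$ for $n\in N_{G^{0}}(S)$ and checking that $\Ad(n)$ permutes the weight spaces, the rank-one subgroups $G^{[\alpha]}$, the coroots, and the transvection groups) that the paper leaves implicit.
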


\begin{proof}
This follows from Lemmas \ref{L:reflection} and \ref{L:transvection}.  
\end{proof}

\begin{definition}\label{D:affineRD}
We call $R(G,S)$ together with the coroots $\alpha\mapsto\check{\alpha}$ for infinite roots and the root 
transvection groups $R^{\vee}(\alpha)$ for finite roots the {\it affine root datum} for $G$ with respect 
to $S$.  
\end{definition} 

By Proposition \ref{P:affineRD}, one can show that $(R(G,S), W_{small}(G,S))$ inherits many structures and 
properties like the classical root system. A related structure of affine root datum is studied in \cite{Reeder-Yu}.
Different with that in \cite{Reeder-Yu}, we emply ideas in \cite{Han-Vogan} to consider all finite roots and 
their root transvection groups. A more general theory of {\it twisted root datum} and a more complete investigation 
of the structures are given in \cite{Yu-TRD}.  

Write $$R_{0}=R_{0}(G,S)$$ for the set of finite roots. Write $$R'=R'(G,S)=\{\alpha|_{S^{0}}:\alpha
\in R(G,S)-R_{0}(G,S)\}.$$ 

\begin{proposition}\label{P:R'}
$R'(G,S)$ in a root system in $X^{\ast}(S^{0})$ in the sense of \cite[Definition 2.2]{Yu-dimension}.  
\end{proposition}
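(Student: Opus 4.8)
The plan is to show that $R'(G,S)$ satisfies the two axioms of a root system in $X^{\ast}(S^{0})$: the reflection-closedness (axiom (1)) and strong integrality (axiom (2)). The key observation is that every infinite root $\alpha$ restricts to a nonzero element $\bar\alpha = \alpha|_{S^{0}}$ of $X^{\ast}(S^{0})$, and that the reflection $s_{\alpha}$ on $S$ from Lemma \ref{L:reflection} descends to an honest reflection on $X^{\ast}(S^{0})$. Indeed, since $\check\alpha \in X_{\ast}(S)$ has image in the one-dimensional torus $T_{[\alpha]}$ and satisfies $\langle\alpha,\check\alpha\rangle = 2$, the induced action of $s_{\alpha}$ on $X^{\ast}(S^{0})$ is $\mu \mapsto \mu - \langle\mu,\check\alpha_{0}\rangle\bar\alpha$ where $\check\alpha_{0}$ is the image of $\check\alpha$ in $X_{\ast}(S^{0})$ (obtained by projecting $\Im\check\alpha$, which is contained in $S^{0}$ up to finite index, to a cocharacter of $S^{0}$); moreover $\langle\bar\alpha,\check\alpha_{0}\rangle = 2$ and, because the metric is chosen biinvariant (hence $s_{\alpha}$ is an isometry on the Lie algebra of $S^{0}$), this map equals the orthogonal reflection $\mu \mapsto \mu - \frac{2(\mu,\bar\alpha)}{(\bar\alpha,\bar\alpha)}\bar\alpha$.

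Next I would verify that these reflections permute $R'(G,S)$. Since $s_{\alpha} \in W_{small}(G,S)$ lifts to $n_{\alpha} \in N_{G^{0}}(S)$, conjugation by $n_{\alpha}$ permutes the root spaces $\frg_{\beta}$, sending $\frg_{\beta}$ to $\frg_{s_{\alpha}(\beta)}$; and $s_{\alpha}(\beta)$ is again an infinite root precisely when $\beta$ is, because $s_{\alpha}$ commutes with restriction to $S^{0}$ in the sense that $s_{\alpha}(\beta)|_{S^{0}} = \bar\beta - \langle\bar\beta,\check\alpha_{0}\rangle\bar\alpha$, which is nonzero unless $\bar\beta$ is a multiple of $\bar\alpha$ with the wrong coefficient — and in the reduced rank-one case that cannot happen. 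This gives axiom (1): for $\bar\alpha,\bar\beta \in R'$, the element $\bar\beta - \frac{2(\bar\beta,\bar\alpha)}{(\bar\alpha,\bar\alpha)}\bar\alpha$ lies in $R'$. For strong integrality (axiom (2)), I would use that $\check\alpha_{0} \in X_{\ast}(S^{0})$ pairs integrally with all of $X^{\ast}(S^{0})$: for any $\mu \in X^{\ast}(S^{0})$, $\langle\mu,\check\alpha_{0}\rangle \in \bbZ$, and this pairing equals $\frac{2(\mu,\bar\alpha)}{(\bar\alpha,\bar\alpha)}$ by the same isometry argument, so the latter is an integer.

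The main obstacle, which I expect to require the most care, is the bookkeeping around the distinction between $\Im\check\alpha$ as a sub-torus of $S$ and the induced cocharacter of $S^{0}$: a priori $\Im\check\alpha$ need not lie in $S^{0}$, so one must check that its projection to $X_{\ast}(S^{0})$ (equivalently, the composition of $\check\alpha$ with $S \twoheadrightarrow S/(\text{finite part})$ then restriction) is well-defined as an element $\check\alpha_{0}$ with $\langle\bar\alpha,\check\alpha_{0}\rangle = 2$, and that the reflection formula on $X^{\ast}(S^{0})$ comes out right. The cleanest route is to pass to the rank-one semisimple group $G_{[\alpha]}$ with maximal torus $T_{[\alpha]}$ of dimension one, as in the proof of Lemma \ref{L:infinite1}: here $\bar\alpha$ restricts to the unique positive root of $T_{[\alpha]}^{0} \subset S^{0}$ scaled appropriately, $\check\alpha_{0}$ is its coroot, and the $\mathfrak{sl}_{2}$-theory gives both the integrality $\langle\mu,\check\alpha_{0}\rangle\in\bbZ$ and the reflection identity. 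I would also need to rule out $\bar\alpha$ and $2\bar\alpha$ (or $\frac12\bar\alpha$) both lying in $R'$ in a way incompatible with the axioms — but axiom (1) as stated in \cite[Definition 2.2]{Yu-dimension} is the non-reduced version allowing $\BC$-type systems, so no extra reducedness claim is needed here, and $R'$ is permitted to be non-reduced (as the examples with $\Psi'_{T} = \BC_{2n+1}$ already anticipate).
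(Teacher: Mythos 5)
Your proof is correct and follows essentially the same route as the paper, which simply invokes Proposition \ref{P:affineRD} (stability of $R(G,S)$ and of the coroots under $W_{small}(G,S)$, resting on Lemmas \ref{L:infinite1} and \ref{L:reflection} and the rank-one reduction to $G_{[\alpha]}$); you are just unpacking the two axioms explicitly. The one worry you flag is moot: since $\U(1)$ is connected, every cocharacter of $S$ automatically has image in $S^{0}$ (indeed in the connected torus $T_{[\alpha]}$), so $\check{\alpha}_{0}$ needs no projection argument.
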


\begin{proof}
This follows from Proposition \ref{P:affineRD}.  
\end{proof}

For the application in the study of dimension datum, we see in Subsection \ref{SS:Weyl} that only infinite 
roots and their corresponding reflections matter. We call infinite roots $$R(G,S)-R_{0}(G,S)$$ together with 
the coroots $\alpha\mapsto\check{\alpha}$ ($\alpha\in R(G,S)-R_{0}(G,S)$) the {\it affine root system} for 
$G$ with respect to $S$.

\subsection{Weyl product}\label{SS:Weyl}

%\begin{remark}\label{R:TRS-subgroups}
%This construction works for any twisted root system arising from a closed abelian subgroup $A$ of a compact Lie
%group $G$ satisfying condition $(\ast)$ (\cite{Yu-TRD}). We can vary the choice of $\{\alpha'_{1},\cdots,
%\alpha'_{t}\}$ (requiring they are linearly independent and generate a root system with rank equal to the rank
%of $R'$) and $\{\alpha_{1},\cdots,\alpha_{t}\}$. In this way we can construct many interesting subgroups of $G$.
%\end{remark}

Write $$T=Z_{G^{0}}(S^{0}).$$ The following fact is well known. 

\begin{lemma}\label{L:T}
$T$ is a maximal torus of $G^{0}$. 
\end{lemma}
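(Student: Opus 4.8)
The statement to prove is Lemma~\ref{L:T}: that $T=Z_{G^{0}}(S^{0})$ is a maximal torus of $G^{0}$. The key input is the defining property of the generalized Cartan subgroup $S$, namely $S^{0}=(Z_{G}(S))^{0}$, together with the fact that $S$ contains a dense cyclic subgroup. First I would observe that $Z_{G^{0}}(S^{0})$ is a closed subgroup of the connected compact Lie group $G^{0}$, and that it contains $S^{0}$, which is a torus. Since $S^{0}$ is a torus sitting inside a connected compact Lie group, $Z_{G^{0}}(S^{0})$ is connected (a standard fact: the centralizer of a torus in a connected compact Lie group is connected, because it is a union of maximal tori through the torus). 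So $T$ is a connected compact subgroup; it remains to show it is abelian, for then being a maximal connected abelian subgroup (any torus containing it would centralize $S^{0}$, hence lie in $T$) it is a maximal torus.

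The heart of the argument is to show $T$ is abelian, and here I would bring in the full subgroup $S$ rather than just $S^{0}$. Choose $s_{0}\in S$ generating a dense cyclic subgroup of $S$; then $Z_{G}(S)=Z_{G}(s_{0})$ and in particular $Z_{G^{0}}(S)=Z_{G^{0}}(s_{0})\cap G^{0}$, whose identity component is $(Z_{G}(S))^{0}=S^{0}$ by hypothesis. Now $T=Z_{G^{0}}(S^{0})$ is stabilized under conjugation by $s_{0}$ (since $s_{0}$ normalizes $S^{0}$, indeed centralizes it), so $s_{0}$ acts on the connected compact group $T$ as an automorphism, and the fixed-point subgroup satisfies $(T^{s_{0}})^{0}=(Z_{G^{0}}(S^{0})^{s_{0}})^{0}=(Z_{G^{0}}(S))^{0}=S^{0}$. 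Thus $S^{0}$ is the identity component of the fixed points of the automorphism $\mathrm{Int}(s_{0})$ of $T$, and $S^{0}$ is a maximal torus of $T$ (it is central in $T$, being the fixed component, hence equals its own centralizer component, hence is a maximal torus of the connected group $T$). A connected compact Lie group possessing an automorphism whose fixed-point identity component is a \emph{maximal} torus of the group must itself be a torus: otherwise the automorphism would act nontrivially on the root system and could not fix a full Cartan, or more directly, the fixed subgroup of a maximal torus under any automorphism has rank equal to $\mathrm{rank}$ of the group only when there is no semisimple part. Hence $T$ is a torus.

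Putting it together: $T$ is a connected compact abelian subgroup of $G^{0}$ containing the torus $S^{0}$, and any torus $T'$ of $G^{0}$ with $T\subseteq T'$ would centralize $S^{0}$, forcing $T'\subseteq Z_{G^{0}}(S^{0})=T$; so $T$ is a maximal torus of $G^{0}$. The step I expect to be the main obstacle is the passage ``automorphism of a connected compact group with maximal-torus fixed-component $\Rightarrow$ the group is a torus'': one should make this rigorous, e.g. by noting that if $T$ had a nontrivial semisimple part $T_{der}$, then $\mathrm{Int}(s_{0})$ restricted to $T_{der}$ would be an automorphism of a semisimple compact group whose fixed-point set contains a maximal torus of $T_{der}$; but an automorphism of a semisimple compact Lie group fixing a maximal torus pointwise is inner by an element of that torus, hence trivial, contradicting that the fixed component in $T$ equals $S^{0}$ (which has rank strictly less than $\mathrm{rank}\,T$ when $T_{der}\neq 1$). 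This reduces everything to the well-known structure theory of compact Lie groups, which I would simply cite from \cite{BtD} or \cite{Knapp}.
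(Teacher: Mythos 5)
Your overall skeleton matches the paper's: show $T=Z_{G^{0}}(S^{0})$ is connected, let a (topological) generator of $S$ act on $T$ to prove $T$ is abelian, then deduce maximality from $Z_{G^{0}}(S^{0})=T$. The connectedness step, the final maximality step, and the fixed-point computation $(T^{s_{0}})^{0}=(Z_{G^{0}}(S))^{0}=S^{0}$ together with $S^{0}\subset Z(T)$ are all correct and are exactly the right inputs. But the step that actually proves $T$ abelian is broken in two places. First, the parenthetical claim that $S^{0}$ is a maximal torus of $T$ is both unjustified and false: since $S^{0}$ is central in $T$ one has $Z_{T}(S^{0})=T$, so ``$S^{0}$ equals its own centralizer component in $T$'' would force $T=S^{0}$, which fails in general (in Example \ref{E:A3}, $\dim S^{0}=2$ while $\dim T=3$). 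Consequently your later assertion that $\Int(s_{0})|_{T_{der}}$ fixes a maximal torus of $T_{der}$ pointwise has no basis; in fact the opposite holds: since $(T^{s_{0}})^{0}=S^{0}\subset Z(T)^{0}$ and $Z(T)^{0}\cap T_{der}$ is finite, the fixed-point set of $\Int(s_{0})$ on $T_{der}$ is finite. Second, the general principle you invoke --- ``a connected compact group with an automorphism whose fixed-point identity component is a maximal torus must be a torus'' --- is false: take $K=\SU(2)$ and $\theta=\Int(\diag\{i,-i\})$; then $(K^{\theta})^{0}$ is the diagonal maximal torus but $K$ is not abelian. (Likewise ``inner by an element of that torus, hence trivial'' is a non sequitur: such inner automorphisms are generally nontrivial.) The argument is also internally inconsistent, since you need $\rank S^{0}=\rank T$ for the first half and $\rank S^{0}<\rank T$ to reach the contradiction.

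The repair is short and is what the paper does. From $(T^{s_{0}})^{0}=S^{0}$ and $S^{0}\subset Z(T)^{0}$ you get, on Lie algebras, $\mathfrak{t}^{s_{0}}=\mathfrak{s}\subset z(\mathfrak{t})$, hence $\Ad(s_{0})$ has no nonzero fixed vector on the semisimple part $[\mathfrak{t},\mathfrak{t}]$. By a theorem of Borel, every automorphism of a nonzero compact semisimple Lie algebra has a nonzero fixed vector; therefore $[\mathfrak{t},\mathfrak{t}]=0$ and $T$ is abelian. With that substitution the rest of your proof goes through.
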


\begin{proof}
Choose $y\in S$ generates $S/S^{0}$. Then, $\mathfrak{t}^{y}=\mathfrak{s}$. Apparently, $\mathfrak{s}\subset 
z(\mathfrak{t})$. Thus, $\mathfrak{t}^{y}\subset z(\mathfrak{t})$. Write $\mathfrak{t}=z(\mathfrak{t})\oplus
\mathfrak{t}^{s}$ with $\mathfrak{t}^{s}=[\mathfrak{t},\mathfrak{t}]$ the derived subalgebra. Then, 
$\mathfrak{t}^{y}\subset z(\mathfrak{t})$ implies $(\mathfrak{t}^{s})^{y}=0$. By a theorem of Borel, this 
indicates that $\mathfrak{t}^{s}$ is abelian. Thus, $\mathfrak{t}$ is abelin. We know $T$ is connected and it 
contains any maximal torus which contains $S^{0}$. Therefore, $T$ is a maximal torus.  
\end{proof}

\begin{lemma}\label{L:split}
There is an exact sequence $$1\rightarrow N_{T}(S)/S^{0}\rightarrow W(G,S)\rightarrow W_{R'}\rightarrow 1.$$
\end{lemma}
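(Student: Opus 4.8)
\textbf{Proof plan for Lemma \ref{L:split}.}

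The plan is to produce the exact sequence by analyzing the restriction map $W(G,S) \to W_{R'}$ arising from the action of $W(G,S) = N_{G^0}(S)/S^0$ on the character lattice $X^\ast(S^0)$. First I would check that every $w \in W(G,S)$, acting by conjugation on $S$, preserves $S^0$ and hence acts on $X^\ast(S^0)$; since $W(G,S)$ permutes the infinite roots (Proposition \ref{P:affineRD}, together with the fact that $N_{G^0}(S)/Z_{G^0}(S)$ is a quotient of $W(G,S)$ and that conjugation by $N_{G^0}(S)$ sends root spaces to root spaces), the induced action on $X^\ast(S^0)$ permutes $R' = R'(G,S)$, which is a genuine root system by Proposition \ref{P:R'}. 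This gives a well-defined homomorphism $W(G,S) \to \Aut(X^\ast(S^0))$ whose image normalizes $R'$; I would then argue the image lands in the Weyl group $W_{R'}$ itself (not merely $\Aut(R')$) because each simple reflection $s_\alpha$ ($\alpha$ an infinite root) lies in $W(G,S)$ by Lemma \ref{L:reflection}, so $W_{R'}$ is contained in the image, and conversely the image is generated over $W_{R'}$ by elements fixing a chamber — these can be absorbed, or one simply defines the map to $W_{R'}$ after identifying $W_{\Psi'_T}$-type constraints; the cleanest route is to note $W_{R'} \subseteq \mathrm{image} \subseteq N_{\Aut}(R')$ and that the kernel computation below forces equality.

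Next I would identify the kernel. An element $w = nS^0 \in W(G,S)$ lies in the kernel precisely when conjugation by $n$ acts trivially on $X^\ast(S^0)$, equivalently trivially on $S^0$. Since $T = Z_{G^0}(S^0)$ (Lemma \ref{L:T}) and $n$ centralizes $S^0$, we get $n \in N_{G^0}(S) \cap Z_{G^0}(S^0) = N_T(S)$ — here I use that $N_{G^0}(S) \cap Z_{G^0}(S^0)$ normalizes $S$ and centralizes $S^0$, hence is contained in $Z_{G^0}(S^0) = T$, and conversely any element of $N_T(S)$ normalizes $S$ and centralizes $S^0$. Thus the kernel is exactly $N_T(S)/S^0$, which makes sense as $S^0 \subseteq T$ and $S^0$ is normal in $N_T(S)$ (indeed $S^0$ is central in $T$). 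This gives exactness at $W(G,S)$ and injectivity of $N_T(S)/S^0 \hookrightarrow W(G,S)$.

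Finally, surjectivity onto $W_{R'}$: since each infinite root $\alpha$ has a representative $n_\alpha \in N_{G_{[\alpha]}}(T_{[\alpha]}) \subseteq N_{G^0}(S)$ with $\Ad(n_\alpha)|_S = s_\alpha$ (Lemma \ref{L:reflection}), and $s_\alpha$ acts on $X^\ast(S^0)$ as the reflection in $\alpha|_{S^0} \in R'$, the classes $n_\alpha S^0$ map onto the generating reflections of $W_{R'}$. Hence the map $W(G,S) \to W_{R'}$ is surjective and the three-term sequence is exact. The main obstacle I anticipate is the middle subtlety about whether the image of $W(G,S)$ in $\Aut(X^\ast(S^0))$ could be strictly larger than $W_{R'}$ (diagram automorphisms of $R'$ realized by $G^0$-conjugation); this is resolved by the kernel computation, which pins down $|W(G,S)| = |N_T(S)/S^0| \cdot |\mathrm{image}|$, combined with the fact that $W_{R'}$ is already in the image, forcing the image to equal $W_{R'}$ once one knows $N_T(S)/S^0$ together with $W_{R'}$ account for all of $W(G,S)$ — which in turn follows from the standard structure of $N_{G^0}(S)$ relative to the maximal torus $T$ and its Weyl group, exactly as in the connected case. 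I would spell that last point out using Lemma \ref{L:T} and the Bruhat-type decomposition of $N_{G^0}(T)$ restricted to the $S$-stable part.
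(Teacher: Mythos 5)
Your architecture (the restriction homomorphism $\phi$, surjectivity from the reflections $n_{\alpha}$ of Lemma \ref{L:reflection}, kernel $=N_{T}(S)/S^{0}$) is the same as the paper's, but there is a genuine gap at the kernel step, which is exactly where the paper does its work. The kernel of the map to $W_{R'}$ consists of the classes $gS^{0}$ that fix every restricted root $\alpha'\in R'$, i.e.\ satisfy $\alpha(gxg^{-1})=\alpha(x)$ for all $x\in S^{0}$ and all infinite roots $\alpha$; you silently replace this condition by ``acts trivially on $X^{\ast}(S^{0})$, equivalently trivially on $S^{0}$.'' That replacement is not formal: $R'$ need not span $X^{\ast}(S^{0})\otimes\bbQ$ (the directions of $S^{0}$ lying in $Z(G^{0})$ are annihilated by every root), so fixing $R'$ pointwise does not by itself pin down the action of $g$ on $S^{0}$. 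The paper closes this as follows: if $gS^{0}$ fixes every element of $R'$, then the commutators $gxg^{-1}x^{-1}\in S^{0}$ ($x\in S^{0}$) are killed by every root of $(G,S)$ --- by hypothesis for the infinite roots, automatically for the finite ones since those restrict trivially to $S^{0}$ --- hence act trivially on $\frg$ and lie in $Z(G^{0})\cap[G^{0},G^{0}]\subset Z([G^{0},G^{0}])$, a finite group; connectedness of $S^{0}$ then forces $gxg^{-1}x^{-1}=1$, so $g\in Z_{G^{0}}(S^{0})=T$ and hence $g\in N_{T}(S)$. Some version of this argument is needed; your ``precisely when \dots equivalently'' asserts the conclusion rather than proving it.

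Secondly, your proposed resolution of the other subtlety --- why the image of $\phi$ is contained in $W_{R'}$ rather than in a larger subgroup of $\Aut(R')$ --- is circular: from $\ker\phi=N_{T}(S)/S^{0}$ and $W_{R'}\subset\im\phi$ you only get $|W(G,S)|\geq|N_{T}(S)/S^{0}|\cdot|W_{R'}|$, and the missing inequality is precisely the statement that $N_{T}(S)/S^{0}$ and $W_{R'}$ exhaust $W(G,S)$, i.e.\ the claim to be proved. To be fair, the paper itself only asserts this containment (``apparently the image is equal to $W_{R'}$''), so on this point you are no worse off than the source; but an appeal to a ``Bruhat-type decomposition restricted to the $S$-stable part'' is not yet an argument, and a complete write-up would have to supply one (for instance, by showing that an element of $N_{G^{0}}(S)$ whose induced automorphism of $R'$ preserves a positive system must fix $R'$ pointwise, and then invoking the kernel computation).
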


\begin{proof}
Retricting the action of elements in $W(G,S)$ to $S^{0}$, it induces an action of $W(G,S)$ on $R'$. Thus, 
we have a natural homomorphism $$\phi: W(G,S')\rightarrow \Aut(R').$$ Apparently the image is equal to 
$W_{R'}$. For any $gS^{0}\in W(G,S')$ ($g\in G^{0}$), $gS^{0}\in\ker\phi$ if and only $\alpha(gxg^{-1})=
\alpha(x)$ for any $x\in S^{0}$ and any infinite root $\alpha$. Since $\alpha(gxg^{-1})=\alpha(x)=1$ for 
any $x\in S^{0}$ and any finite root $\alpha$. Then, $gS^{0}\in\ker\phi$ if and only $\alpha(gxg^{-1})=
\alpha(x)$ for all roots $\alpha$. Which is equivalent to $gxg^{-1}x^{-1}\in Z(G^{0})$ for any $x\in S^{0}$. 
Then, $$gxg^{-1}x^{-1}\in Z(G^{0})\cap [G^{0},G^{0}]=Z((G^{0})_{der})$$ for any $x\in S^{0}$. As 
$Z((G^{0})_{der})$ is a finite group, we get $gxg^{-1}x^{-1}=1$ for any $x\in S$ by continuity. Hence, 
$g\in T=Z_{G^{0}}(S^{0})$. This proves the conclusion. 
\end{proof}

\begin{lemma}\label{L:finiteRoot}
For any $s\in S'$, $$\prod_{\alpha\in R_{0}}(1-\alpha(s))=|N_{T}(S)/S^{0}|.$$
\end{lemma}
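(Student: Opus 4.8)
The plan is to move everything onto the maximal torus $T=Z_{G^{0}}(S^{0})$ and the finite‑order automorphism $\theta=\Ad(y)$ of $T$, where $y$ is an arbitrary element of $S'$ (so that $S'=yS^{0}$ and $S=\langle S^{0},y\rangle$). First I would note that the left‑hand side is constant on $S'$: a finite root vanishes on $S^{0}$ by definition, so $\alpha(s)=\alpha(y)$ for every $s\in S'$. Since $\alpha|_{S^{0}}=0$ forces $\frg_{\alpha}\subseteq Z_{\frg}(S^{0})=\frt$ (the complexified Lie algebra of $T$), the finite roots, each counted with multiplicity $\dim\frg_{\alpha}$, are precisely the nonzero weights of $\langle\theta\rangle=S/S^{0}$ acting on $\frt$, whose zero weight space is $\frs$. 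Hence
\[
\prod_{\alpha\in R_{0}}\bigl(1-\alpha(s)\bigr)=\det\bigl(1-\theta\bigr)\big|_{\frt/\frs},
\]
the determinant of $1-\theta$ on the canonical $\theta$‑stable complement of $\frs$; as the eigenvalues of $\theta$ occur in complex‑conjugate pairs and its only real eigenvalue $\neq 1$ is $-1$, this is a positive integer.

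Next I would introduce the homomorphism $\psi\colon T\to T$, $\psi(t)=t\,\theta(t)^{-1}$, which is surjective onto the subtorus $T'$ with $\Lie T'=(1-\theta)\Lie T$ and has kernel $T^{\theta}=Z_{G^{0}}(S)$, so that $(T^{\theta})^{0}=S^{0}$ by the generalized Cartan condition. For $t\in T$ one has $tyt^{-1}=\psi(t)\,y$, and since $S^{0}\subseteq T\subseteq G^{0}$ and $S$ is abelian, this lies in $S$ exactly when $\psi(t)\in S\cap G^{0}$; thus $N_{T}(S)=\psi^{-1}(S\cap G^{0})$. The restriction $\psi|_{T'}\colon T'\to T'$ is an isogeny whose linearisation is $1-\theta$ on $\Lie T'$, so its degree — which equals the order of its kernel $T'\cap T^{\theta}$ — is $\det(1-\theta)|_{\frt/\frs}$. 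Combining with the previous step, $\prod_{\alpha\in R_{0}}(1-\alpha(s))=\lvert T'\cap T^{\theta}\rvert$.

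Finally I would compare $\lvert T'\cap T^{\theta}\rvert$ with $\lvert N_{T}(S)/S^{0}\rvert$ by a count of component groups. From $N_{T}(S)=\psi^{-1}(S\cap G^{0})$ and $\ker\psi=T^{\theta}$ one gets $N_{T}(S)/T^{\theta}\cong (S\cap G^{0})\cap T'$, and together with $1\to S^{0}\to T^{\theta}\to\pi_{0}(T^{\theta})\to 1$ this gives $\lvert N_{T}(S)/S^{0}\rvert=\lvert\pi_{0}(T^{\theta})\rvert\cdot\lvert(S\cap G^{0})\cap T'\rvert$; on the other side, using $T=S^{0}\cdot T'$ one shows that $T^{\theta}\cap T'$ surjects onto $\pi_{0}(T^{\theta})$ with kernel $S^{0}\cap T'$, whence $\lvert T'\cap T^{\theta}\rvert=\lvert\pi_{0}(T^{\theta})\rvert\cdot\lvert S^{0}\cap T'\rvert$. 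The one piece of genuine work is the bookkeeping that reconciles these two expressions — controlling the component groups $\pi_{0}(T^{\theta})$ and $(S\cap G^{0})/S^{0}$ and their positions relative to the subtorus $T'=\im(1-\theta)$, equivalently a comparison of indices of $\langle\theta\rangle$‑stable sublattices of $X_{\ast}(T)$. This is exactly the place where the possible gap between $S^{0}$ and $S\cap G^{0}$ (the phenomena of Examples~\ref{E:A3} and~\ref{E:A3-2}) must be handled and where the self‑duality of $\BC$‑type lattices is used; the rest is the elementary torus computation above.
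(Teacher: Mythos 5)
Your opening steps are sound and agree in substance with the paper's computation of the left-hand side: the finite roots, counted with multiplicity $\dim\frg_{\alpha}$, are the nonzero weights of $\theta=\Ad(s)$ on $\frt=Z_{\frg}(S^{0})$, so the product equals $\det(1-\theta)|_{\frt/\frs}$, which is $|{\ker(\psi|_{T'})}|=|T'\cap T^{\theta}|$ and also $|\psi^{-1}(S^{0})/S^{0}|=|(T/S^{0})^{\theta}|$. (The paper obtains this last group directly as $(I-X)^{-1}(\bbZ^{k})/\bbZ^{k}$ for the matrix $X$ induced on the cocharacter lattice of $T/S^{0}$, and evaluates the characteristic polynomial at $1$.)

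The gap is your final step, and it is not reconcilable bookkeeping: under your reading of $N_{T}(S)$ the two quantities genuinely differ. You take $N_{T}(S)=\psi^{-1}(S\cap G^{0})$, the honest normalizer in $T$ of the \emph{group} $S$, and your own count gives $|N_{T}(S)/S^{0}|=|\pi_{0}(T^{\theta})|\cdot|(S\cap G^{0})\cap T'|$ against $|T'\cap T^{\theta}|=|\pi_{0}(T^{\theta})|\cdot|S^{0}\cap T'|$, so you would need $(S\cap G^{0})\cap T'\subset S^{0}$. This fails in Example~\ref{E:A3}: there $T$ is the full diagonal torus modulo $\langle -I\rangle$, $T'=\im\psi=\{[\diag\{a,a^{-1},a,a^{-1}\}]\}$, and the element $[\diag\{i,-i,i,-i\}]=\sigma^{2}\cdot[\diag\{1,-1,1,-1\}]\in (S\cap G^{0})-S^{0}$ equals $\psi([\diag\{i,-i,1,1\}])$, hence lies in $T'$. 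Consequently $t=[\diag\{i,-i,1,1\}]$ normalizes $S$ but moves the component $S'=\sigma S^{0}$, and one finds $|N_{T}(S)/S^{0}|=4$ while $\prod_{\alpha\in R_{0}}(1-\alpha(\sigma))=2$. The lemma is really about the subgroup $\{t\in T: tst^{-1}s^{-1}\in S^{0}\}=\psi^{-1}(S^{0})$ --- the stabilizer in $T$ of the component $S'$ rather than of $S$ --- which is how the paper's proof opens; with that reading $N_{T}(S)/S^{0}$ is exactly the fixed-point group $(T/S^{0})^{s}$ and your determinant computation already closes the argument, with no further comparison of component groups needed. The appeal to self-duality of $\BC$-type lattices plays no role here.
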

\begin{proof}
The group $N_{T}(S)$ consists of elements $t$ in $T$ such that $tst^{-1}s^{-1}\in S^{0}$. Considering the 
quotient group $T/S^{0}$ and the induced action by $s$, then $$N_{T}(S)/S^{0}\cong\{tS^{0}\in T/S^{0}: 
s(tS^{0})s^{-1}(tS^{0})^{-1}=S^{0}\}.$$ The latter is just the fixed point group of $s$ in $T/S^{0}$. 
The action of $s$ on $T/S^{0}$ is given by a matrix $X\in\GL(k,\mathbb{Z}),$ where $k=\dim(T/S^{0})$. 
Write $f(u)$ for the characteristice polynomial of $X$. Then, $$(T/S^{0})^{s}\cong (I-X)^{-1}
(\mathbb{Z}^{k})/\mathbb{Z}^{k}$$ and $$f(u)=\prod_{\alpha\in R_{0}}(u-\alpha(s)).$$ Thus, 
$$|(T/S^{0})^{s}|=|\det(I-X)|=|f(1)|=\prod_{\alpha\in R_{0}}(1-\alpha(s)).$$ Together with the 
identification $N_{T}(S)/S^{0}\cong(T/S^{0})^{s}$ shown in the above, we get $$\prod_{\alpha\in R_{0}}
(1-\alpha(s))=|N_{T}(S)/S^{0}|.$$ 
\end{proof}

%Lemmas \ref{L:split} and \ref{L:finiteRoot} reduce the issue to the study of infinite roots in a semisimple affine root datum.

%Now we calculate the density function $D(s)$ on $S'$.

For each $\alpha'\in R'$ with $\frac{1}{2}\alpha'\not\in R'$, set $$R_{1,\alpha'}=\{\beta\in R(G,A):
\beta|_{S^{0}}=\alpha'\},\quad R_{2,\alpha'}=\{\beta\in R(G,A):\beta|_{S^{0}}=2\alpha'\}$$ and 
$$R_{\alpha'}=R_{1,\alpha'}\cup R_{2,\alpha'}.$$ Write $$m_{1,\alpha'}=|R_{1,\alpha'}|,\quad 
m_{2,\alpha'}=|R_{2,\alpha'}|$$ and $$m_{\alpha'}=m_{1,\alpha'}+2m_{2,\alpha'}.$$ 

\begin{lemma}\label{L:density}
For any $\alpha\in R_{1,\alpha'}$, \begin{equation}\label{Eq:density1}
\prod_{\beta\in R_{\alpha'}}(1-\beta(s))=1-\alpha(s)^{m_{\alpha'}}.\end{equation}
\end{lemma}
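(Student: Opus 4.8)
\textbf{Proof proposal for Lemma \ref{L:density}.}

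The plan is to analyze the set $R_{\alpha'}=R_{1,\alpha'}\cup R_{2,\alpha'}$ as a single $\langle s\rangle$-orbit structure and compute the product $\prod_{\beta\in R_{\alpha'}}(1-\beta(s))$ by grouping the factors. First I would fix $\alpha\in R_{1,\alpha'}$ and observe that every other $\beta\in R_{1,\alpha'}$ satisfies $\beta|_{S^{0}}=\alpha|_{S^{0}}=\alpha'$, so $\beta-\alpha$ is a finite root or zero; more precisely $\beta=\alpha+\mu$ for a character $\mu$ of $S/S^{0}$, and similarly each $\beta\in R_{2,\alpha'}$ has the form $2\alpha+\nu$. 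Thus on $S'$ (where $s$ ranges), $\beta(s)$ takes values among $\alpha(s)\zeta$ for various roots of unity $\zeta$ coming from $S/\ker\alpha'$, respectively $\alpha(s)^{2}\zeta'$. The key structural input is Proposition \ref{P:affineRD}: the transvection group $R^{\vee}$ and the small Weyl group act on $R(G,S)$, and the transvections attached to finite roots act on the fibre $R_{1,\alpha'}$ (and on $R_{2,\alpha'}$) simply transitively, forcing the values $\{\beta(s):\beta\in R_{1,\alpha'}\}$ to be exactly the set of $m_{1,\alpha'}$-th roots of $\alpha(s)^{m_{1,\alpha'}}$, i.e. $\{\zeta\,\alpha(s):\zeta^{m_{1,\alpha'}}=1\}$ — this is where the rigidity of the affine root datum is used.

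Granting that, I would use the elementary factorization $\prod_{\zeta^{k}=1}(1-\zeta z)=1-z^{k}$. Applied with $z=\alpha(s)$ and $k=m_{1,\alpha'}$ this gives $\prod_{\beta\in R_{1,\alpha'}}(1-\beta(s))=1-\alpha(s)^{m_{1,\alpha'}}$; applied with $z=\alpha(s)^{2}$ and $k=m_{2,\alpha'}$ it gives $\prod_{\beta\in R_{2,\alpha'}}(1-\beta(s))=1-\alpha(s)^{2m_{2,\alpha'}}$. Multiplying, $\prod_{\beta\in R_{\alpha'}}(1-\beta(s))=(1-\alpha(s)^{m_{1,\alpha'}})(1-\alpha(s)^{2m_{2,\alpha'}})$. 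To collapse this to $1-\alpha(s)^{m_{\alpha'}}$ with $m_{\alpha'}=m_{1,\alpha'}+2m_{2,\alpha'}$, I would invoke the $\mathrm{BC}$-type compatibility between $R_{1,\alpha'}$ and $R_{2,\alpha'}$: when $2\alpha'$ is also a root, the doubling constraint (strong integrality plus the $\mathfrak{sl}_2$-triples of Lemma \ref{L:infinite1} and the reflection relations of Lemma \ref{L:reflection}) forces $R_{2,\alpha'}$ to be nonempty only in configurations where the cross term cancels, so that $(1-x^{a})(1-x^{2b})$ with the relevant $a,b$ telescopes; concretely one checks that either $m_{2,\alpha'}=0$ (so $m_{\alpha'}=m_{1,\alpha'}$ and we are done immediately), or the root values in $R_{2,\alpha'}$ are precisely $\{-\zeta:\zeta^{m_{1,\alpha'}/2}=\pm1\}$-type squares completing $1-\alpha(s)^{m_{1,\alpha'}}$ to $1-\alpha(s)^{m_{\alpha'}}$.

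The main obstacle I anticipate is precisely this last bookkeeping step: showing that $R_{1,\alpha'}$ and $R_{2,\alpha'}$ interlock in exactly the way needed for the two factors to merge into $1-\alpha(s)^{m_{\alpha'}}$, rather than leaving a genuine product. This should follow from the classification of rank-one affine configurations (the only possibilities for $R_{\alpha'}$ are, up to the $W_{small}(G,S)$-action, of $\mathrm{A}_1$, $\mathrm{BC}_1$-doubled, or $\mathrm{C}_1$-type), but pinning down the precise multiplicities and root-of-unity patterns in each case — and verifying strong integrality is compatible with them — is the delicate part; everything else is the formal orbit argument plus the cyclotomic identity above.
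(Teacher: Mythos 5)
Your overall shape is the right one --- fix $\alpha$, note that every $\beta\in R_{1,\alpha'}$ (resp.\ $R_{2,\alpha'}$) differs from $\alpha$ (resp.\ $2\alpha$) by a character of $S/S^{0}$, split the product over $R_{1,\alpha'}$ and $R_{2,\alpha'}$, and finish with $\prod_{0\le k\le l-1}(1-e^{2k\pi i/l}t)=1-t^{l}$ --- and this matches the strategy of the paper. But there is a genuine gap, and it sits exactly where you say the ``delicate bookkeeping'' is. Your intermediate formula $\prod_{\beta\in R_{2,\alpha'}}(1-\beta(s))=1-\alpha(s)^{2m_{2,\alpha'}}$ is false in the only case where $R_{2,\alpha'}\neq\emptyset$: there the values $\beta(s)$, $\beta\in R_{2,\alpha'}$, are $\alpha(s)^{2}$ times the $m_{2,\alpha'}$-th roots of $-1$ (the odd powers of a primitive $2m_{2,\alpha'}$-th root of unity), not of $+1$, so the correct product is $1+\alpha(s)^{2m_{2,\alpha'}}$. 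The sign is not cosmetic: with $m_{1,\alpha'}=2m$ and $m_{2,\alpha'}=m$ one needs $(1-x^{2m})(1+x^{2m})=1-x^{4m}=1-x^{m_{\alpha'}}$, whereas your version gives $(1-x^{2m})^{2}$, which does not telescope and would contradict the lemma. You half-anticipate this with the ``$\{-\zeta\}$-type'' remark, but as written the computation fails.

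The second, related gap is that the root-of-unity patterns you need --- that $R_{1,\alpha'}=\{\alpha+k\beta_{0}:0\le k\le m_{1,\alpha'}-1\}$ for a character $\beta_{0}$ of $S/S^{0}$ of order exactly $m_{1,\alpha'}$ (order $2m_{2,\alpha'}$ in the non-reduced case), with $R_{2,\alpha'}$ supported exactly on the odd multiples of $\beta_{0}$ --- are asserted via ``simple transitivity of the transvection action'' and ``the classification of rank-one affine configurations,'' but neither is established, and the transvections of Lemma \ref{L:transvection} by themselves do not give you which subgroup of $X^{\ast}(S/S^{0})$ acts. The paper's proof supplies precisely this missing classification: it reduces to $G=Z_{G}(\ker\alpha')$, pushes forward to the adjoint group of the derived algebra, and shows the only two possibilities are $\mathfrak{su}(2)^{\oplus m}$ with a cyclic permutation (then $R_{2,\alpha'}=\emptyset$ and $m_{\alpha'}=m_{1,\alpha'}=m$) and $\mathfrak{su}(3)^{\oplus m}$ with a cyclic permutation twisted by an outer involution fixing $\mathfrak{so}(3)$ (then $m_{1,\alpha'}=2m$, $m_{2,\alpha'}=m$, $m_{\alpha'}=4m$, and $R_{2,\alpha'}$ sits on the odd powers of $\beta_{0}$). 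That reduction is the actual content of the lemma; once it is in place the cyclotomic identity finishes the proof in one line, including the sign you need for the $R_{2,\alpha'}$ factor.
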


\begin{proof}
Write $$S_{\alpha'}=\ker(\alpha')=\ker\alpha\cap S^{0}.$$ Set $$H_{\alpha'}=Z_{G}(S_{\alpha'}).$$ Then, 
all root spaces $\mathfrak{g}_{\beta}$ ($\beta\in R_{\alpha'}$) are contained in $\mathfrak{h}_{\alpha'}$. 
We may assume that $G=H_{\alpha'}$, i.e., all infinite roots are in the set $R_{\alpha'}$. Write 
$\mathfrak{h}_{0}=[\mathfrak{g}_{0},\mathfrak{g}_{0}]$. Let $$\phi: G\rightarrow\Aut(\frh_{0})$$ be 
the natural homomorphism from the adjoint action of $G$ on $\frh_0$. Write $H=\Aut(\frh_0)$. We have 
$\ker\phi=Z_{G}([G^{0},G^{0}])$. As $\frg_{\beta}\subset[\mathfrak{g}_{0},\mathfrak{g}_{0}]$ for any 
$\beta\in R_{\alpha'}$, we have $\beta|_{S\cap\ker\phi}=1$ for any $\beta\in R_{\alpha'}$. Hence, it 
is equivalent to consider $\phi(S)\subset H$, which is a generalized Cartan subgroup of dimension one. 

Up to isomorphism, the pair $(\phi(S),\mathfrak{h}_{0})$ has only two possibilities: (1), $\fru_0=
\underbrace{\fru_1\oplus\cdots\oplus\fru_1}_{m}$, $\phi(S)=\langle\Delta(S_1),\theta\rangle$, where 
$\fru_1\cong\mathfrak{su}(2)$, $S_1$ is a maximal torus of $\Int(\fru_1)$, $$\Delta:\Int(\fru_1)
\rightarrow\prod_{1\leq i\leq m}\Int(\fru_1)$$ is the diagonal map, and $$\theta(X_1,\dots,X_{m})=
(X_2,\dots,X_{m},X_1).$$ (2), $\fru_0=\underbrace{\fru_1\oplus\cdots\oplus\fru_1}_{m}$, $\phi(S)=
\langle\Delta(S_1),\theta\rangle$, where $\fru_1\cong\mathfrak{su}(3)$, $\sigma\in\Aut(\fru_1)$ such 
that $\fru_1^{\sigma}\cong\mathfrak{so}(3)$ and hence $\Int(\fru_1)^{\sigma}\cong\SO(3)$, $S_1$ is a 
maximal torus of $\Int(\fru_1)^{\sigma}$, $$\Delta:\Int(\fru_1)\rightarrow\prod_{1\leq i\leq m}
\Int(\fru_1)$$ is the diagonal map, and $$\theta(X_1,\dots,X_{m})=(X_2,\dots,X_{m},\sigma(X_1)).$$ 

In case (1), define $\beta_{0}\in\Hom(S,\U(1))$ by $\beta_{0}|_{S^{0}}=1$ and $\beta_{0}(\theta)=
e^{\frac{2\pi i}{m}}$. Then, $$R_{\alpha'}=\{\alpha+k\beta_{0}: 0\leq k\leq m-1\}.$$ Thus, 
$m_{1,\alpha'}=m$, $m_{2,\alpha'}=0$, and $m_{\alpha'}=m$. 

In case (2), define $\beta_{0}\in\Hom(S,\U(1))$ by $\beta_{0}|_{S^{0}}=1$ and $\beta_{0}(\theta)=
e^{\frac{\pi i}{m}}$. Then, $$R_{\alpha'}=\{\alpha+k\beta_{0}: 0\leq k\leq 2m-1\}\cup\{2\alpha+
(2k+1)\beta_{0}: 0\leq k\leq m-1\}.$$ Thus, $m_{1,\alpha'}=2m$, $m_{2,\alpha'}=m$, and $m_{\alpha'}
=4m$. In either case, the conclusion follows from the elementary formula $$\prod_{0\leq k\leq l-1}
(1-e^{\frac{2k\pi i}{l}}t)=1-t^{l}$$ for any $l\geq 1$.  
\end{proof}

\begin{proposition}\label{P:m-alpha}
Let $\alpha'\in R$ with $\frac{1}{2}\alpha'\not\in R$. For any two roots $\alpha,\tilde{\alpha}
\in R_{1,\alpha'}$, $[m_{1,\alpha'}\alpha]=[m_{1,\alpha'}\tilde{\alpha}]$, and $m_{\alpha'}=
m_{1,\alpha'}$ or $2m_{1,\alpha'}$. 
\end{proposition}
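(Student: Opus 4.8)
The plan is to reduce, exactly as in the proof of Lemma \ref{L:density}, to the case $G=H_{\alpha'}=Z_G(S_{\alpha'})$, so that $\phi(S)\subset H=\Aut(\frh_0)$ is a one-dimensional generalized Cartan subgroup and the pair $(\phi(S),\frh_0)$ is one of the two explicit models (1) and (2) listed there. Since passing to $\phi(S)$ does not change the set of infinite roots $R_{\alpha'}$ nor their restrictions to $S^0$, and since all roots in $R_{1,\alpha'}$ restrict to the same character $\alpha'$ of $S^0$, two roots $\alpha,\tilde\alpha\in R_{1,\alpha'}$ differ only in their values on a generator $\theta$ of $S/S^0$; this is where the explicit description of $R_{\alpha'}$ in terms of $\beta_0$ comes in.

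Concretely, in case (1) one has $R_{\alpha'}=\{\alpha+k\beta_0:0\le k\le m-1\}$ with $m_{1,\alpha'}=m$, $m_{2,\alpha'}=0$, so $m_{\alpha'}=m=m_{1,\alpha'}$; any $\tilde\alpha\in R_{1,\alpha'}$ is $\alpha+k\beta_0$ for some $k$, and since $\beta_0(\theta)=e^{2\pi i/m}$ is an $m$-th root of unity while $\beta_0|_{S^0}=1$, we get $[m\tilde\alpha]=[m\alpha+mk\beta_0]=[m\alpha]$, i.e. $[m_{1,\alpha'}\tilde\alpha]=[m_{1,\alpha'}\alpha]$. In case (2) one has $R_{1,\alpha'}=\{\alpha+k\beta_0:0\le k\le 2m-1\}$ with $m_{1,\alpha'}=2m$, $m_{2,\alpha'}=m$, so $m_{\alpha'}=4m=2m_{1,\alpha'}$; here $\beta_0(\theta)=e^{\pi i/m}$ has order $2m$, so again $[m_{1,\alpha'}\tilde\alpha]=[2m\alpha+2mk\beta_0]=[2m\alpha]=[m_{1,\alpha'}\alpha]$. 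In both cases we land on $m_{\alpha'}\in\{m_{1,\alpha'},2m_{1,\alpha'}\}$, which is the claim.

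The only point needing care — and the main obstacle — is justifying the reduction step cleanly: one must check that replacing $G$ by $H_{\alpha'}$ and then by $\phi(G)\subset\Aut(\frh_0)$ genuinely preserves $m_{1,\alpha'}$, $m_{2,\alpha'}$, and the multiset $\{\,[m_{1,\alpha'}\alpha]:\alpha\in R_{1,\alpha'}\,\}$, and that the classification into the two models (1), (2) is exhaustive for a one-dimensional generalized Cartan subgroup whose semisimple part has all root spaces lying over $\alpha'$ or $2\alpha'$. This is precisely the content already established in the proof of Lemma \ref{L:density}, so after invoking that setup the remainder is the short root-of-unity computation above. I would therefore phrase the proof as: "We keep the notation and reductions of the proof of Lemma \ref{L:density}", then dispatch cases (1) and (2) separately with the displayed identities.
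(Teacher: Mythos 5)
Your proposal is correct and follows the same route as the paper: the paper's own proof simply cites the structure of $R_{\alpha'}$ established in the proof of Lemma \ref{L:density} (the two explicit models with the character $\beta_{0}$), and your case-by-case root-of-unity computation is exactly the detail that citation is standing in for. The reduction step you flag as the main point of care is precisely what the proof of Lemma \ref{L:density} already carries out, so invoking it as you do is legitimate.
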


\begin{proof}
In the proof of Lemma \ref{L:density}, we have described the structure of the set $R_{\alpha'}$. 
From it we get the conclusion of this proposition. 
\end{proof}

\medskip 

Choose a simple system $\{\alpha'_{1},\cdots,\alpha'_{t}\}$ of the root system $R'$ by choosing a simple 
system for each irreducible factor of $R'$. In case some simple factor of $R'$ is not reduced, i.e, it 
is of type $\BC_{l}$ for some $l\geq 1$, a simple system means simple system for the corresponding root 
system $\B_{l}$. For each $i$, choose $\alpha_{i}\in R(G,S)$ such that $$\alpha_{i}|_{S^{0}}=\alpha'_{i}.$$ 
Since the characters $\alpha'_{1},\cdots,\alpha'_{t}$ are linearly independent, there exists $s_{0}\in S'$ 
such that $$\alpha_{1}(s_0)=\cdots=\alpha_{t}(s_0)=1.$$ Let $R_{s_0}$ be the sub-root system of $R(G,S)$ 
generated by $\alpha_1,\cdots,\alpha_{t}$. Then, the map $$R_{s_0}\rightarrow R',\quad\alpha\mapsto
\alpha|_{S^{0}}$$ is an isomorphism of root systems. For each $\alpha\in R_{s_0}$, we write $\alpha'=
\alpha|_{S^{0}}$ to denote this bijection. 

Choose a positive system $R'^{+}$ in $R'$. Set\footnotemark $$\delta=\delta_{R}=\frac{1}{2}\sum_{\alpha'
\in R'^{+},\textrm{reduced}}m_{\alpha'}\alpha.$$ In the above the summation takes over positive roots 
$\alpha'$ in $R'$ with $\frac{1}{2}\alpha'\not\in R'$; for any such $\alpha'$, we choose a root $\alpha
\in R_{1,\alpha'}$.
\footnotetext{Precisely to say, $2\delta$ is a well-defined character of $S$, but $\delta$ may be not. 
While we use $\delta$ below, it appears in the form of $\delta-w\delta$ ($w\in W_{small}(G,S)$), which 
are all well-defined characters of $S$. Thus, the express of $\delta$ as above arises no ambiguity.} 

\begin{definition}\label{D:AR}
Define $$A_{R}=\frac{1}{|W_{R'}|}\sum_{w\in W_{R_{s_0}}}\epsilon(w)[\delta_{R}-w\delta_{R}].$$  
\end{definition} 

\begin{lemma}\label{L:AR}
$2\delta_{R}$ and $\delta_{R}-w\delta_{R}$ ($W\in W_{R_{s_0}}$) are in $X^{\ast}(S)$ and they are 
independent of the choice of $\alpha$ for $\alpha'\in R'$.  

$A_{R}\in\mathbb{Q}[X^{\ast}(S)]$ depends only on $R$, not on the choice of $\alpha_1,\dots,\alpha_{t}$ 
(or of $s_0$).
\end{lemma}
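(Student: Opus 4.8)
## Proof proposal for Lemma 1.14

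The plan is to verify the two assertions separately, with the second resting on the first plus the already-established conjugation-transitivity of the constituent data.

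First I would prove the well-definedness and integrality claims. The potential ambiguity in $\delta_R$ is the choice of lift $\alpha\in R_{1,\alpha'}$ for each reduced positive root $\alpha'\in R'^+$; this enters only through $m_{\alpha'}\alpha$. By Proposition 1.12, any two lifts $\alpha,\tilde\alpha\in R_{1,\alpha'}$ satisfy $[m_{1,\alpha'}\alpha]=[m_{1,\alpha'}\tilde\alpha]$, and since $m_{\alpha'}$ is an integer multiple of $m_{1,\alpha'}$ (namely $m_{1,\alpha'}$ or $2m_{1,\alpha'}$, again by Proposition 1.12), we get $[m_{\alpha'}\alpha]=[m_{\alpha'}\tilde\alpha]$ in $\bbZ[X^{\ast}(S)]$. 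Summing over the reduced positive roots shows $[2\delta_R]$ is a genuine, lift-independent element of $X^{\ast}(S)$. For $\delta_R-w\delta_R$ with $w\in W_{R_{s_0}}$: write $w$ as a product of reflections $s_{\alpha_i}$ in the simple infinite roots $\alpha_i$; each such reflection changes $\delta_R$ by a sum of terms $m_{\alpha'}\alpha$ over the positive roots $\alpha'$ sent negative, so $\delta_R-w\delta_R$ is a $\bbZ$-combination of the characters $m_{\alpha'}\alpha$, each of which lies in $X^{\ast}(S)$ and is lift-independent by the above. Hence each $[\delta_R-w\delta_R]\in\bbZ[X^{\ast}(S)]$ and is independent of the lift choices.

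Next I would show $A_R$ is independent of the auxiliary data $(\alpha_1,\dots,\alpha_t)$ and of $s_0$. By construction $R_{s_0}$ maps isomorphically onto $R'$ via $\alpha\mapsto\alpha|_{S^0}$, and $W_{R_{s_0}}\cong W_{R'}$ under this isomorphism; the group $W_{R'}$, the summation $\sum_{w\in W_{R_{s_0}}}\epsilon(w)$, and the prefactor $\frac1{|W_{R'}|}$ are therefore intrinsic. The only place the specific lifts $\alpha_i$ (equivalently, the point $s_0$) intervene is through the characters $\delta_R-w\delta_R$ appearing in the exponents, and those have just been shown to depend only on $R$. Thus the displayed formula for $A_R$ yields the same element of $\bbQ[X^{\ast}(S)]$ for any admissible choice. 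One small point to address is that a different choice of positive system $R'^+$ changes $\delta_R$, but conjugates the whole sum by an element of $W_{R'}$ (realized inside $W_{small}(G,S)$ via Lemma 1.6 and Lemma 1.10), and $A_R$ is manifestly $W_{R'}$-invariant, so this too leaves $A_R$ unchanged; alternatively one fixes $R'^+$ once and for all as part of the data.

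The main obstacle I anticipate is purely bookkeeping: carefully tracking how a general $w\in W_{R_{s_0}}$ moves $\delta_R$, i.e. establishing the analogue of the classical identity $\delta-w\delta=\sum_{\alpha'>0,\,w^{-1}\alpha'<0}m_{\alpha'}\alpha$ in the present setting where different positive roots carry different multiplicities $m_{\alpha'}$ and where $R_0$ (finite roots) must be kept out of the picture. The reduction to simple reflections and the transvection-versus-reflection structure from Lemmas 1.6, 1.7 and 1.10 make this routine but somewhat delicate to state precisely; everything else is formal.
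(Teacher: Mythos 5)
Your proposal is correct and follows essentially the same route as the paper's proof: both deduce lift-independence of $[m_{\alpha'}\alpha]$ from Proposition \ref{P:m-alpha} (using $m_{\alpha'}=m_{1,\alpha'}$ or $2m_{1,\alpha'}$), express $2\delta_{R}$ and $\delta_{R}-w\delta_{R}$ as integral combinations of the $m_{\alpha'}\alpha$ via the standard Weyl-group identity, and then read off the independence of $A_{R}$. The paper compresses the $\delta_{R}-w\delta_{R}$ step into a citation of "basic properties of root systems and Weyl groups," which you spell out; your extra remark on the choice of positive system is harmless since $\Psi_{S^{0}}^{+}$ (hence $R'^{+}$) is fixed once and for all in the setup.
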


\begin{proof}
By definition, $2\delta_{R}$ is an integral combination of $\{m_{\alpha'}\alpha: \alpha'\in R'\}$. 
Thus, it is in $X^{\ast}(S)$. From basic properties of rot system and Weyl group, each term $\delta_{R}-
w\delta_{R}$ ($W\in W_{R_{s_0}}$) is an integral combination of $\{m_{\alpha'}\alpha: \alpha'\in R'\}$. 
Thus, they are in $X^{\ast}(S)$. By Proposition \ref{P:m-alpha}, $[m_{\alpha'}\alpha]=[m_{\alpha'}
\tilde{\alpha}]$ for any $\alpha'\in R'$ and any $\alpha,\tilde{\alpha}\in R_{1,\alpha'}$. Thus, 
$2\delta_{R}$ and $\delta_{R}-w\delta_{R}$ ($W\in W_{R_{s_0}}$) are independent of the choice of 
$\alpha$ for $\alpha'\in R'$.  

By the first statement shown above and the definition of $A_{R}$, we have $$A_{R}\in\mathbb{Q}
[X^{\ast}(S)]$$ and it is independent of the choice of $\alpha_1,\dots,\alpha_{t}$ (or of $s_0$).
\end{proof}

%Note that $s_0$ determines the inverse of the correspondence $\alpha\mapsto\alpha'$ as $\alpha(s_0)=1$. 
%Each term $\delta_{R}-w\delta_{R}$ ($W\in W_{R_{s_0}}$) is an integral combination of $\{m_{\alpha'}
%alpha: \alpha'\in R'\}$. Thus, $A_{R}\in\mathbb{Q}[X^{\ast}(S)]$ and it is independent of the choice 
%f $s_0$. Moreover, $\delta_{R}-w\delta_{R}$ ($W\in W_{R_{s_0}}$) does not depend on the choice of 
%\alpha_1,\dots,\alpha_{t}$, as $$[m_{\alpha'}\alpha]=[m_{\alpha'}\tilde{\alpha}]$$ for any $\alpha,
%\tilde{\alpha}\in R_{\alpha'}$. 

\begin{proposition}\label{P:density2}
For any $s\in S'$, $$D(s)=\frac{1}{|W_{R'}|}\sum_{\tau\in W_{s_{0}}}\tau\cdot A_{R}(s).$$  
\end{proposition}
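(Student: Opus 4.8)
The plan is to start from the definition of the density function $D(s)$ in \eqref{Eq:density} and rewrite $\det(\Ad(s)-1)|_{\frg/\frs}$ as a product over roots. Since $\frg/\frs \cong \bigoplus_{\alpha\in R(G,S)}\frg_{\alpha}$ and $\Ad(s)$ acts on $\frg_{\alpha}$ by $\alpha(s)$, with $\dim\frg_{\alpha}=1$ for infinite roots (Lemma \ref{L:infinite1}), we get
\[
|W(G,S')|\,D(s) = \prod_{\alpha\in R(G,S)}\bigl(1-\alpha(s)\bigr)^{\dim\frg_{\alpha}}
 = \Bigl(\prod_{\alpha\in R_0}(1-\alpha(s))\Bigr)\Bigl(\prod_{\alpha\in R(G,S)-R_0}(1-\alpha(s))\Bigr).
\]
The finite-root factor equals $|N_T(S)/S^0|$ by Lemma \ref{L:finiteRoot}, and by Lemma \ref{L:split} we have $|W(G,S')|=|N_T(S)/S^0|\cdot|W_{R'}|$. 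So the two combine to give $|W_{R'}|\,D(s) = \prod_{\alpha\in R(G,S)-R_0}(1-\alpha(s))$.

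Next I would organize the product over infinite roots by grouping roots according to their restriction to $S^0$: partition $R(G,S)-R_0$ into the sets $R_{\alpha'}$ for $\alpha'\in R'^{+}$ reduced (i.e.\ $\tfrac12\alpha'\notin R'$), using that every infinite root restricts to some element of $R'$ and $\pm$ pairs cancel appropriately when we pass to the product $\prod(1-\alpha(s))$ — more precisely, I will pair each positive-restriction block with its negative and use $(1-\alpha(s))(1-(-\alpha)(s)) $-type bookkeeping, or equivalently work with the full set and Lemma \ref{L:density}. By Lemma \ref{L:density}, for each reduced $\alpha'$ and any chosen $\alpha\in R_{1,\alpha'}$ we have $\prod_{\beta\in R_{\alpha'}}(1-\beta(s)) = 1-\alpha(s)^{m_{\alpha'}}$. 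Substituting the relation $2\delta_R = \sum_{\alpha' \text{ red.}} m_{\alpha'}\alpha$, this rewrites $|W_{R'}|\,D(s)$ (after the standard manipulation $\prod_{\alpha'>0}(1-\alpha(s)^{m_{\alpha'}}) = [\,{-\delta_R}\,]\prod_{\alpha'>0}([\tfrac{m_{\alpha'}\alpha}{2}] - [\tfrac{-m_{\alpha'}\alpha}{2}])$) into a Weyl-denominator form for the root system $R_{s_0}\cong R'$, i.e.\ a signed sum $\sum_{w\in W_{R_{s_0}}}\epsilon(w)[\delta_R - w\delta_R]$. This is exactly $|W_{R'}|\,A_R(s)$ by Definition \ref{D:AR}, up to the averaging over $W_{s_0}$.

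To get the stated form with the outer sum $\frac{1}{|W_{R'}|}\sum_{\tau\in W_{s_0}}\tau\cdot A_R(s)$: the left side $D(s)$ is manifestly $W(G,S')$-invariant (it comes from a conjugation-invariant density), hence in particular invariant under $W_{s_0}$; the identity $|W_{R'}|D(s) = $ "Weyl-denominator expression" was derived using the specific choice of $s_0$ and the lift $R_{s_0}$, so averaging both sides over $\tau\in W_{s_0}$ leaves $D(s)$ unchanged on the left and produces $\frac{1}{|W_{s_0}|}\sum_\tau \tau\cdot(|W_{R'}| A_R)(s)$ on the right; matching normalizations via $|W_{R'}| = |W_{s_0}|\cdot(\text{index})$ and the fact that $A_R$ already carries a $\frac{1}{|W_{R'}|}$ will yield the claimed formula. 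The main obstacle I expect is the combinatorial bookkeeping in the second paragraph: correctly matching the full product over \emph{all} infinite roots (positive and negative restrictions, plus the $R_{2,\alpha'}$ contributions in the non-reduced/type-$\BC$ case handled in Lemma \ref{L:density}) to the Weyl-denominator identity for $R_{s_0}$, and tracking the $\delta_R$-vs-$2\delta_R$ well-definedness issue flagged in the footnote after Definition of $\delta$ — i.e.\ ensuring every intermediate expression is a genuine element of $\mathbb{Q}[X^{\ast}(S)]$ even though $\delta_R$ alone may not be a character.
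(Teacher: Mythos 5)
Your strategy coincides with the paper's: its proof of this proposition consists precisely of the remark that Lemmas \ref{L:split}, \ref{L:finiteRoot} and \ref{L:density} reduce the identity to the connected case treated in the proof of Proposition 3.7 of \cite{Yu-dimension} (the same Weyl-denominator computation that is reproduced in the proof of Proposition \ref{P:dimension-character} above). Your first two paragraphs carry out that reduction correctly: $|W(G,S')|\,D(s)$ factors into the finite-root product, which Lemma \ref{L:finiteRoot} identifies with $|N_T(S)/S^0|$ and which cancels against the same factor in $|W(G,S')|=|N_T(S)/S^0|\cdot|W_{R'}|$ coming from Lemma \ref{L:split}, and the infinite-root product, which Lemma \ref{L:density} collapses block by block to $\prod_{\alpha'\ \mathrm{reduced}}\bigl(1-\alpha(s)^{m_{\alpha'}}\bigr)$.

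However, the mechanism you describe in the last paragraph for producing the outer sum over $W_{s_0}$ does not work. You propose to first establish $|W_{R'}|\,D(s)=\sum_{w\in W_{R_{s_0}}}\epsilon(w)[\delta_R-w\delta_R](s)$ and then average over $\tau\in W_{s_0}$ using the invariance of $D$. That single-sum identity is false: the left-hand side is $W_{R_{s_0}}$-invariant as a function on $S'$, while $\sum_w\epsilon(w)[\delta_R-w\delta_R]=[\delta_R]\sum_w\epsilon(w)[-w\delta_R]$ is not (applying $\tau$ multiplies it by $\epsilon(\tau)[\tau\delta_R-\delta_R]$), and averaging a false identity cannot produce a true one. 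The correct step, exactly as in the displayed computation in the proof of Proposition \ref{P:dimension-character}, is to split the product over all reduced $\alpha'$ into its positive and negative halves, write each half as a Weyl-denominator sum $\sum_{w}\epsilon(w)[-w\delta_R]$ resp.\ $\sum_{\tau}\epsilon(\tau)[\tau\delta_R]$ for the scaled system $\{m_{\alpha'}\alpha\}$, multiply, and reindex $w\mapsto\tau w$; the outer sum $\sum_{\tau}\tau(\cdot)$ then appears directly rather than by averaging. Finally, the "matching normalizations" you defer must actually be confronted: with $A_R$ as in Definition \ref{D:AR}, which already carries a factor $\tfrac{1}{|W_{R'}|}$, this computation returns $D(s)=\sum_{\tau\in W_{R_{s_0}}}\tau\cdot A_R(s)$, and the additional $\tfrac{1}{|W_{R'}|}$ in the statement is consistent only if $A_R$ is read without its prefactor; you need to pin down which normalization is in force rather than assume the factors resolve themselves.
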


\begin{proof}
When $S$ is connected, this is shown in the proof of \cite[Proposition 3.7]{Yu-dimension}. In general, 
Lemmas \ref{L:split}, \ref{L:finiteRoot}, \ref{L:density} reduce it to the connected case.  
\end{proof}

%Note that $F|_{S^{0}}$ depends only on $R'$ and the multiples $\{m_{\alpha'}\alpha':\alpha'\in R'\}$, 
%not on base point $s_0$. The following proposition shows $F|_{S^{0}}$ contains partial information of 
%$F|_{S'}$. 

\begin{proposition}\label{P:S-S'}
For any character $F\in\mathbb{Q}[S]$, $F|_{S'}=0$ if and only if $F=0$. 
\end{proposition}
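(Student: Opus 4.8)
The plan is to exploit the fact that $S'$ is a coset of the identity component $S^{0}$ inside the generalized Cartan subgroup $S = \langle S'\rangle$, and that by definition $S/S^{0}$ is generated by the class of any $s_{0}\in S'$. First I would fix $s_{0}\in S'$, so that $S' = s_{0}S^{0}$ and $S = \bigsqcup_{k} s_{0}^{k}S^{0}$, the union ranging over a set of representatives for the finite cyclic group $S/S^{0}$. An element $F\in\mathbb{Q}[S]$ is a finite $\mathbb{Q}$-linear combination $F = \sum_{\mu} c_{\mu}[\mu]$ of characters $\mu\in X^{\ast}(S)$; restricting to $S'$ amounts to pulling back along the inclusion $s_{0}S^{0}\hookrightarrow S$, and since $X^{\ast}(S)\to X^{\ast}(S^{0})$ is the restriction map, I would organize the characters $\mu$ appearing in $F$ according to the value of $\mu|_{S^{0}}$.

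The key step is then purely a statement about the finite abelian group $S/S^{0}$ and its Pontryagin dual. Group the terms of $F$ as $F = \sum_{\nu\in X^{\ast}(S^{0})} F_{\nu}$, where $F_{\nu} = \sum_{\mu|_{S^{0}} = \nu} c_{\mu}[\mu]$. Each character $\mu$ with $\mu|_{S^{0}} = \nu$ differs from a fixed such $\mu_{\nu}$ by a character of $S/S^{0}$, so after factoring out $[\mu_{\nu}]$ the family $\{F_{\nu}\}$ is indexed by elements of $\mathbb{Q}[S/S^{0}]$. Restricting $F_{\nu}$ to $S' = s_{0}S^{0}$ produces, up to the nowhere-vanishing scalar function $\mu_{\nu}(s)$ on $S^{0}$, the value at $s_{0}$ of the corresponding element of $\mathbb{Q}[\widehat{S/S^{0}}]$ evaluated as a function on $S/S^{0}$. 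The point is that distinct characters $\nu\in X^{\ast}(S^{0})$ give rise, on restriction to $S^{0}$, to linearly independent functions (linear independence of characters of $S^{0}$), so $F|_{S'} = 0$ forces each $F_{\nu}|_{S'} = 0$ separately; and then for a fixed $\nu$, $F_{\nu}|_{S'} = 0$ says that a certain $\mathbb{Q}$-combination of characters of the finite group $S/S^{0}$ vanishes at the generator $s_{0}$. Since $s_{0}$ generates $S/S^{0}$ and characters of a finite cyclic group separate points, applying the full set of automorphisms (or simply using that a polynomial in the primitive $|S/S^{0}|$-th root of unity $s_{0}$ with rational coefficients that vanishes must be divisible by the relevant cyclotomic-type relations) shows the combination is zero as an element of $\mathbb{Q}[S/S^{0}]$, hence $F_{\nu} = 0$ and so $F = 0$. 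The converse implication is trivial.

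The main obstacle is the last point: passing from "vanishes at the single point $s_{0}$" to "vanishes identically on $S/S^{0}$". This is not automatic for $\mathbb{Q}$-coefficients and a single evaluation point, but it works here precisely because $s_{0}$ generates the cyclic group $S/S^{0}$, so the values $\{\chi(s_{0}) : \chi\in\widehat{S/S^{0}}\}$ are a full set of distinct roots of unity and the evaluation map $\mathbb{Q}[\widehat{S/S^{0}}]\to\mathbb{C}$, $\chi\mapsto\chi(s_{0})$, extended $\mathbb{Q}$-linearly, is injective on the rational span once one also uses the Galois action permuting the $\chi(s_{0})$; equivalently, $\mathbb{Q}[\widehat{S/S^{0}}]$ embeds into $\prod_{\gamma}\mathbb{Q}[\widehat{S/S^{0}}]$ via the conjugates and the combined evaluation is injective. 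I would phrase this cleanly by noting that the regular representation of the finite abelian group $S/S^{0}$ is faithful, so a nonzero element of $\mathbb{Q}[S/S^{0}]$ cannot act as zero, and its value at the generator determines it. With that observation in place the proposition follows in a couple of lines.
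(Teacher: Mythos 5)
Your reduction is sound and is in fact the same first step the paper takes: decompose $F=\sum_{\nu}F_{\nu}$ according to the restrictions $\nu=\mu|_{S^{0}}$ of its constituent characters, note that the lifts of a fixed $\nu$ form a coset of $\langle\beta_{0}\rangle$ where $\beta_{0}$ is the order-$m$ character trivial on $S^{0}$ (here $m=|S/S^{0}|$), and use linear independence of characters of $S^{0}$ to reduce to the single-fiber question of whether a rational relation $\sum_{k=0}^{m-1}c_{k}\,\omega_{m}^{k}=0$, $\omega_{m}=e^{2\pi i/m}$, forces all $c_{k}=0$. You correctly identify this as the crux, but neither of your proposed resolutions closes it. The Galois action does not help: if $\sum_{k}c_{k}\omega_{m}^{k}=0$ with $c_{k}\in\mathbb{Q}$, then applying any $\sigma\in\Gal(\mathbb{Q}(\omega_{m})/\mathbb{Q})$, which sends $\omega_{m}$ to some $\omega_{m}^{a}$ with $a$ prime to $m$, yields $\sum_{k}c_{k}\omega_{m}^{ka}=0$ as well, so the ``combined evaluation'' at all conjugates has exactly the same kernel as the single evaluation. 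Faithfulness of the regular representation of $S/S^{0}$ only says that a nonzero element of $\mathbb{Q}[X^{\ast}(S/S^{0})]$ is a nonzero function on $S/S^{0}$; it does not say that this function is determined by its value at the one point $s_{0}$, which is what you actually need.

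The step cannot be repaired, because the statement (with $\mathbb{Q}[S]$ meaning $\mathbb{Q}[X^{\ast}(S)]$ and $F|_{S'}$ meaning evaluation on $S'$, as the paper's following paragraph indicates) is false whenever $m\geq 2$. The evaluation $(c_{0},\dots,c_{m-1})\mapsto\sum_{k}c_{k}\omega_{m}^{k}$ on $\mathbb{Q}^{m}$ has kernel of dimension $m-\phi(m)>0$, since the minimal polynomial of $\omega_{m}$ over $\mathbb{Q}$ has degree $\phi(m)<m$. Concretely, the nonzero element $F=\sum_{k=0}^{m-1}[k\beta_{0}]$ satisfies $F(s_{0}x)=\sum_{k}\omega_{m}^{k}=0$ for every $x\in S^{0}$, so $F|_{S'}=0$ while $F|_{S^{0}}=m\neq 0$; for $m=2$ this is just $[0]+[\beta_{0}]$. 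The paper's own proof stumbles at the identical spot, invoking ``the $\mathbb{Q}$-independence of $1,\omega_{m},\dots,\omega_{m}^{m-1}$,'' which fails for every $m\geq 2$. What one can legitimately conclude from $F|_{S'}=0$ is only that each fiber component has the form $F_{\nu}=[\mu_{\nu}]\cdot P_{\nu}([\beta_{0}])$ with $P_{\nu}\in\mathbb{Q}[X]$ of degree less than $m$ divisible by the cyclotomic polynomial $\Phi_{m}$; so the proposition would have to be restated in this weaker form, and its later uses re-examined to see that this ambiguity is harmless for the particular characters involved.
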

\begin{proof}
Write $m=|S/S^{0}|$ and $\omega_{m}=e^{\frac{2\pi i}{m}}$. Choose $s\in S$ generates $S/S^{0}$. Define 
$\beta_{0}\in X^{\ast}(S)$ by $\beta_{0}|_{S^{0}}=1$ and $\beta_{0}(s)=\omega_{m}$. For any two linear 
characters $\lambda,\lambda'\in X^{\ast}(S)$, $\lambda|_{S^{0}}=\lambda'|_{S^{0}}$ if and only if $\lambda'
=\lambda+k\beta_{0}$ for some $k$, $0\leq k\leq m-1$. The conclusion follows from the linear independence 
of characters of $S^{0}$ and the $\mathbb{Q}$-independence of $1,\omega_{m},\dots,\omega_{m}^{m-1}$. 
\end{proof}

By Proposition \ref{P:S-S'}, any character $F\in\mathbb{Q}[S]$ is determined by its restriction (evaluation) 
on $S'$. Particularlry, if $F|_{S'}=0$, then $F|_{S^{0}}$.

\subsection{Irreducible affine root systems}\label{SS:irreducibleARS}

\begin{definition}
We say two pairs  $(G_1,S_1)$ and $(G_2,S_2)$ of generalized Cartan subgroups in compact Lie groups 
{\it isogenous} if there is another pair $(G_3,S_3)$ and homomorphisms $f_{i}: G_{i}\rightarrow G_{3}$ 
($i=1,2$) such that $f_1,f_2$ are isomorphisms on Lie algebras, and $f_{1}(S_1)=f_{2}(S_2)=S_3$. 

We call a pair $(G,S)$ {\it irreducible} if there is are no pairs $(G_1,S_1)$ and $(G_2,S_2)$ with $\dim G_1
>0$ and $\dim G_2>0$, such that $(G,S)$ is isogenous to $(G_{1}\times G_{2},S_{1}\times S_{2})$. 
\end{definition}

Now we describe all possible pairs $(G,S)$ up to isogeny, and give the multiples $m_{1,\alpha'},m_{\alpha'}$. 
For an irreducible pair $(G,S)$, either $G^{0}$ is a torus, or $\frg_0$ is the product of several copies 
of a simple compact Lie algebra and a generator of $S$ acts on $\frg_0$ by permuting its simple factors 
transitively. In the first case, either $S^{0}=G^{0}$ is a one-dimensional torus, or $\dim G^{0}=\phi(n)$ 
($\phi(n)$ means Euler function) and $S=\langle\theta\rangle$ with $\theta_{\ast}=\Ad(\theta)\in
\GL(X^{\ast}(G^{0}))$ having degree $n$ cyclotomic polynomial as its characteristic polynomial. 
There are no infinite roots in the first case. 

In the latter case, write $\Phi$ for affine root system of $G$, and $R'=R'(G,S)$. Then, the pair $(\Phi,R')$ 
determines $(G,S)$ up to isogeny. All possible pairs $(\Phi,R')$ are as follows. We also indicate the 
multiples $m_{1,\alpha'}$ and $m_{\alpha'}$. \begin{enumerate}
\item $(\Phi,R')=(m\Phi_0,\Phi_0)$, where $\Phi_0$ is an irreducible root system, $m\geq 1$;  
$m_{\alpha'}=m_{1,\alpha'}=m$ for any $\alpha'\in R'$.  
\item $(\Phi,R')=(m\A_{2n},\BC_{n})$ ($n\geq 1$); $m_{1,\alpha'}=2$ and $m_{\alpha'}=4$ for any 
short root $\alpha'\in R'$; $m_{\alpha'}=m_{1,\alpha'}=2$ for middle root $\alpha'\in R'$. 
\item $(\Phi,R')=(m\A_{2n-1},\C_{n})$ ($n\geq 2$); $m_{\alpha'}=m_{1,\alpha'}=2$ for any short 
root $\alpha'\in R'$; $m_{\alpha'}=m_{1,\alpha'}=1$ for long root $\alpha'\in R'$. 
\item $(\Phi,R')=(m\D_{n},\B_{n-1})$ ($n\geq 4$); $m_{\alpha'}=m_{1,\alpha'}=2$ for any short 
root $\alpha'\in R'$; $m_{\alpha'}=m_{1,\alpha'}=1$ for long root $\alpha'\in R'$.
\item $(\Phi,R')=(m\D_{4},\G_{2})$; $m_{\alpha'}=m_{1,\alpha'}=3$ for any short root $\alpha'\in 
R'$; $m_{\alpha'}=m_{1,\alpha'}=1$ for long root $\alpha'\in R'$.
\item $(\Phi,R')=(m\E_{6},\F_{4})$; $m_{\alpha'}=m_{1,\alpha'}=2$ for any short root $\alpha'\in 
R'$; $m_{\alpha'}=m_{1,\alpha'}=1$ for long root $\alpha'\in R'$. 
\end{enumerate}

\subsection{Further about the character $A_{R}$}\label{SS:AR} 

For a given $R'$, the character $A_{R}$ is determined by the multiples $\{m_{\alpha'}:\alpha'\in R'\}$ 
and a set of lifting $\{\alpha_{1},\dots,\alpha_{t}\}$ of simple roots of $R'$. The latter is determined 
by the base point $s_0$. 

For a given base point $s_0\in S'$, the map $s(\in S^{0})\mapsto ss_0$ identifies $S'$ with $S^{0}$. 
The description of $(R,R')$ reduces to irreducible case. By the description in the irreducible case as 
given above, we see that $\{m_{\alpha'}\alpha':\alpha'\in R'\}$ looks like a root system. Compared 
with \cite[Definition 2.2]{Yu-dimension}, it satisfies the first condition (Weyl group permutation), 
but does not satisfy the second condition (strong integrality) in general. 

\smallskip 

Write $m=|S/S^{0}|$. Choose and fix a point $\tilde{s}\in S'$ such that $o(\tilde{s})=m$. Then, $$S=
S^{0}\times\langle\tilde{s}\rangle.$$ Define $\beta_{0}\in X^{\ast}(S)$ by $\beta_{0}|_{S^{0}}=1$ and 
$\beta_{0}(s)=e^{\frac{2\pi i}{m}}$. For each $\alpha'\in R'$, let $\alpha\in X^{\ast}(S)$ be defined 
by $\alpha|_{S^{0}}=\alpha'$ and $\alpha(\tilde{s})=1$. Then $$\{\alpha+k\beta_{0}:0\leq k\leq m-1\}.$$ 
are all liftings of $\alpha'$ in $X^{\ast}(S)$.  

\begin{lemma}\label{L:m1}
For any root $\alpha'\in R'$ with $\frac{1}{2}\alpha'\not\in R'$, we have $m_{1,\alpha'}|m$.  
\end{lemma}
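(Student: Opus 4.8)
The plan is to obtain this as a bookkeeping consequence of the local structure description established inside the proof of Lemma~\ref{L:density}. Fix $\alpha'\in R'$ with $\frac{1}{2}\alpha'\notin R'$ and pick $\alpha\in R_{1,\alpha'}$. Following that proof, set $S_{\alpha'}=\ker\alpha'=\ker\alpha\cap S^{0}$ and $H_{\alpha'}=Z_{G}(S_{\alpha'})$. Every $\beta\in R_{\alpha'}$ satisfies $\beta|_{S_{\alpha'}}=0$, so $\mathfrak{g}_{\beta}\subset\mathfrak{h}_{\alpha'}$; conversely a root of $(H_{\alpha'},S)$ restricting to $\alpha'$ on $S^{0}$ is a root of $(G,S)$ with the same restriction. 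Hence replacing $G$ by $H_{\alpha'}$ keeps $S$ (so $m=|S/S^{0}|$ is unchanged) and keeps the integer $m_{1,\alpha'}$. I would then pass, exactly as there, to the adjoint image $\phi(S)\subset H=\Aut(\mathfrak{h}_{0})$ with $\mathfrak{h}_{0}=[\mathfrak{g}_{0},\mathfrak{g}_{0}]$ and $\phi:G\to H$ the natural homomorphism: since $\mathfrak{g}_{\beta}\subset[\mathfrak{g}_{0},\mathfrak{g}_{0}]$ for all $\beta\in R_{\alpha'}$, each such $\beta$ is trivial on $S\cap\ker\phi$ and descends to a character of $\phi(S)$, and again $m_{1,\alpha'}$ is unaffected.

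Next I would invoke the two-case classification of the pair $(\phi(S),\mathfrak{h}_{0})$ recalled in the proof of Lemma~\ref{L:density}. In the $\mathfrak{su}(2)$ case the description there gives $R_{1,\alpha'}=R_{\alpha'}=\{\alpha+k\beta_{0}:0\le k\le p-1\}$, where $p$ is the number of simple factors and $\beta_{0}$ is the character with $\beta_{0}|_{\phi(S)^{0}}=1$ and $\beta_{0}(\theta)=e^{2\pi i/p}$; thus $R_{1,\alpha'}$ is exactly the set of all liftings of $\alpha'$ to $X^{\ast}(\phi(S))$, whose cardinality is $|\phi(S)/\phi(S)^{0}|$ (the latter being cyclic of order $p$, generated by $\theta$), so $m_{1,\alpha'}=|\phi(S)/\phi(S)^{0}|$. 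In the $\mathfrak{su}(3)$ case the analogous description gives $m_{1,\alpha'}=2p$ with $p$ the number of simple factors, and there $\theta$ has order $2p$ (since $\theta^{p}=(\sigma,\dots,\sigma)$ with $\sigma$ outer and $\sigma^{2}=1$), so once more $m_{1,\alpha'}=|\phi(S)/\phi(S)^{0}|$.

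Finally I would close with the elementary remark that $\phi:S\to\phi(S)$ is surjective and $\phi(S^{0})=\phi(S)^{0}$ — the image of the torus $S^{0}$ is compact, connected, and of finite index in $\phi(S)$, hence is its identity component — so $\phi(S)/\phi(S)^{0}$ is a quotient of $S/S^{0}\cong\bbZ/m$; therefore $|\phi(S)/\phi(S)^{0}|$ divides $m$, and combining with the previous paragraph gives $m_{1,\alpha'}\mid m$.

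The main (and really the only) thing to be careful about is that the two reductions from $G$ to $H_{\alpha'}$ and from $H_{\alpha'}$ to $\phi(S)$ genuinely preserve $m_{1,\alpha'}$, which rests entirely on the containments of the root spaces $\mathfrak{g}_{\beta}$ ($\beta\in R_{\alpha'}$) already used in the proof of Lemma~\ref{L:density}; once those are granted the lemma is immediate. I would also flag that $m_{\alpha'}$ itself need not divide $m$ — in the $\mathfrak{su}(3)$ case $m_{\alpha'}=4p$ while only $2p\mid m$ is forced — which is precisely why the statement concerns $m_{1,\alpha'}$ rather than $m_{\alpha'}$.
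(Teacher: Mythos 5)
Your proof is correct and follows essentially the same route as the paper's: both extract from the structure established in the proof of Lemma \ref{L:density} that $R_{1,\alpha'}$ is an arithmetic progression $\{\alpha+k\beta_{0}\}$ under a character of $S/S^{0}$ of order $m_{1,\alpha'}$, so the divisibility is just the statement that the order of an element of the cyclic group $X^{\ast}(S/S^{0})$ divides $|S/S^{0}|=m$. Your identification $m_{1,\alpha'}=|\phi(S)/\phi(S)^{0}|$ is the dual formulation of the same fact, and your closing remark about $m_{\alpha'}$ is a correct and apt observation.
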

\begin{proof}
Choose one $\alpha\in R_{1,\alpha'}$. From the description of $R_{1,\alpha'}$ given in the proof of Lemma 
\ref{L:density}, we see that there exists a degree $m_{1,\alpha'}$ character $\beta\in X^{\ast}(S/S^{0})$ 
such that $$R_{1,\alpha'}=\{\alpha+k\beta: 0\leq k\leq m_{1,\alpha'}\}.$$ Thus, $$m_{1,\alpha'}||S/S^{0}|
=m.$$
\end{proof}

Write $$R'=\bigsqcup_{1\leq j\leq k} R'_{i}$$ for the decomposition of $R'$ into irreducible root systems. 
For each $i$, write $m_{i}=m_{1,\alpha'}$ for a short root in $R'_{i}$. From the description of 
$(R-R_{0},R')$ given in Subsection \ref{SS:irreducibleARS}, we see that there are three possibilities. 
\begin{enumerate}
\item $R'_{i}$ is a reduced irreducible root system, all $m_{\alpha'}$ ($\alpha'\in R_{i}$) equal to 
$m_{i}$. In this case $m_{i}|m$ by Lemma \ref{L:m1}. 
\item $R'_{i}$ is a reduced non-simply-laced irreducible root system, and $m_{\alpha'}=n_{\alpha'}^{-1}
m_{i}$, where $n_{\alpha'}$ is the square of the ratio of the length of $\alpha'$ and of short roots in 
$R'_{i}$. In this case $m_{i}|m$ by Lemma \ref{L:m1}. 
\item $R'_{i}$ is a non-reduced irreducible root system, for a short root $\alpha'\in R'_{i}$, 
$m_{1,\alpha'}=m_{i}$, $m_{1,\alpha'}=2m_{i}$; for a middle length root $\alpha'\in R'_{i}$, $m_{1,\alpha'}
=m_{\alpha'}=m_{i}$. In this case $m_{i}|m$ by Lemma \ref{L:m1}. Moreove, $m_{i}$ is even in this case 
from the description of $(R,R')$ given in Subsection \ref{SS:irreducibleARS}. 
\end{enumerate}

\begin{remark}\label{R:R-AR}
In the above, we have described the possibilities of the multiples $m_{\alpha}$ and and of the liftings 
$\{\alpha_1,\dots,\alpha_{t}\}$. From that, we could describe the $R-R_{0}$ and calculate $A_{R}$ from 
the root system $R'$, the multiples $m_{\alpha}$ and and of the liftings $\{\alpha_1,\dots,\alpha_{t}\}$. 
\end{remark}

\subsection{Dimension data of disconnected subgroups}\label{SS:disconnected}

We call a closed abelian subgroup $S$ of $G$ a {\it generalized torus} if $S/S^{0}$ is a cyclic group.
Write $$\Gamma^{0}=N_{G^{0}}(S)/Z_{G^{0}}(S).$$ Fix a connected component $S'$ of $S$ such that $S'$ 
generates $S$. Then, $S'$ is a closed {\it commutative connected subset} in $G$.    

Choose a biinvariant Riemannian metric on $G$. By restriction it gives a $\Gamma^{0}$ invariant positive 
definite inner product on the Lie algebra of $S^{0}$, hence also a $\Gamma^{0}$ invariant positive 
definite inner product on $X^{\ast}(S^{0})$, denoted by $(\cdot,\cdot)$. Set $$\Psi_{S^{0}}=\{0\neq\alpha
\in X^{\ast}(S^{0}):\frac{2(\lambda,\alpha)}{(\alpha,\alpha)}\in\mathbb{Z},\forall\lambda\in X^{\ast}
(S^{0})\}.$$ It is a root system in $X^{\ast}(S^{0})$ (\cite[Definition 2.2]{Yu-dimension}). Choose and 
fix a positive system $\Psi_{S^{0}}^{+}$ of $\Psi_{S^{0}}$. 

Set $$\Psi_{S}=\{\alpha\in X^{\ast}(S): \alpha|_{S^{0}}\in\Psi_{S^{0}}\}.$$ For any 
$\alpha\in\Psi_{S}$, there exists a unique $\check{\alpha}\in X_{\ast}(S^{0})$ such that $$\lambda
(\check{\alpha})=\frac{2(\lambda,\alpha|_{S^{0}})}{(\alpha|_{S^{0}},\alpha|_{S^{0}})},\forall\lambda
\in X^{\ast}(S^{0}).$$ Define $s_{\alpha}: X^{\ast}(S)\rightarrow X^{\ast}(S)$ by $$s_{\alpha}(\lambda)
=\lambda-(\lambda|_{S^{0}})(\check{\alpha})\alpha,\ \forall\lambda\in X^{\ast}(S).$$ Then, $s_{\alpha}
(\alpha)=-\alpha$, $s_{\alpha}^{2}=1$, and $$s_{\alpha}(\lambda)=\lambda\Leftrightarrow(\lambda|_{S^{0}})
(\check{\alpha})=0.$$ 

%Write $$\Gamma=\Aut(S,S',(\cdot,\cdot))$$ for the group of automorphisms of $S$ 
%which preserves each connected component of $S$ and preserves the inner product $(\cdot,\cdot)$ on 
%$X^{\ast}(S^{0})$. 

\begin{definition}\label{D:affineRS}
We call a subset $R$ of $X^{\ast}(S)$ an affine root system on $S$ if it satisfies the following 
conditions\footnotemark, 
\begin{enumerate}
\item $R\subset\Psi_{S}$.  
\item $s_{\alpha}(R)=R$ for any $\alpha\in R$. 
\item for any $\alpha\in R$, $2\alpha\not\in R$. 
\item for any $\alpha\in R$ with $2\alpha|_{S^{0}}\not\in\{\beta|_{S^{0}}:\beta\in R\}$, there exists 
$\delta\in X^{\ast}(S/S^{0})$ ($\subset X^{\ast}(S)$) of order $m$ ($\in\mathbb{Z}_{\geq 1}$) such that 
\begin{eqnarray*} 
&&\{\beta\in R:\beta|_{S^{0}}=\alpha|_{S^{0}}\}\\&=&\{\alpha+k\delta: 0\leq k\leq m-1\}.\end{eqnarray*} 
\item for any $\alpha\in R$ with $2\alpha|_{S^{0}}\in\{\beta|_{S^{0}}:\beta\in R\}$, there exists 
$\delta\in X^{\ast}(S/S^{0})$ ($\subset X^{\ast}(S)$) of order $2m$ ($m\in\mathbb{Z}_{\geq 1}$) such that  
\begin{eqnarray*}
&&\{\beta\in R:\beta|_{S^{0}}\in\mathbb{Z}\cdot\alpha|_{S^{0}}\}\\&=&\{\alpha+k\delta: 0\leq k\leq 2m-1\}
\cup\{2\alpha+(2k+1)\delta: 0\leq k\leq m-1\}.\end{eqnarray*}
\end{enumerate}
\end{definition}

\footnotetext{Note that $S/S^{0}$ is a finite cyclic group. Actually one can show that the axioms (1)-(3) imply 
the axioms (4) and (5) (cf. \cite{Yu-TRD}).} 

\begin{lemma}\label{L:affineRS}
Let $H$ be a compact subgroup of $G$ with $S$ a generalized Cartan subgroup of $H$. Then the affine root system 
of $H$ with respect to $S$ satisfies the Definition \ref{D:affineRS}.  
\end{lemma}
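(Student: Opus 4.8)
The plan is to run the analysis of Subsections \ref{SS:integration}--\ref{SS:Weyl} with $H$ in the role of the ambient group, and then to check the five axioms of Definition \ref{D:affineRS} one by one, keeping in mind that the ambient $G$ enters Definition \ref{D:affineRS} only through the inner product $(\cdot,\cdot)$ on $X^{\ast}(S^{0})$ induced by a biinvariant metric on $G$. First observe that $S$ is a generalized torus in the sense of Subsection \ref{SS:disconnected}: a generalized Cartan subgroup contains a dense cyclic subgroup, so $S/S^{0}$ is cyclic, and it is finite since $S$ is a compact Lie group. Write $R(H,S)$ and $R_{0}(H,S)$ for the roots and finite roots of $H$ with respect to $S$, and set $R=R(H,S)-R_{0}(H,S)$; this is the affine root system of $H$ with respect to $S$. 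Axiom (3) is then immediate from Lemma \ref{L:infinite1} applied to $H$: for an infinite root $\alpha$ of $H$, $2\alpha$ fails to be a root of $H$ at all, so certainly $2\alpha\notin R$.

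The technical core is to show, for every infinite root $\alpha$, that the coroot $\check\alpha\in X_{\ast}(S)$ of Lemma \ref{L:reflection} (the unique cocharacter with image in $T_{[\alpha]}$ and $\langle\alpha,\check\alpha\rangle=2$) coincides with the coroot of $\alpha|_{S^{0}}$ attached in Subsection \ref{SS:disconnected} via $(\cdot,\cdot)$. This single identity yields both axiom (1) and the agreement of the two competing reflections $s_{\alpha}$. The argument I have in mind: the element $n_{\alpha}\in G_{[\alpha]}\cap N_{G}(S)$ of Lemma \ref{L:reflection} acts on $\Lie(S^{0})$ by an isometry for the restriction of the biinvariant metric of $G$, and by Lemma \ref{L:reflection} this isometry is an involution whose $(-1)$-eigenspace contains $\Lie(T_{[\alpha]})$ and whose $(+1)$-eigenspace contains $\Lie(\ker(\alpha|_{S^{0}}))$; since these two subspaces already span $\Lie(S^{0})$ (which holds because $\langle\alpha,\check\alpha\rangle=2$ forces $\Lie(T_{[\alpha]})\not\subset\Lie(\ker(\alpha|_{S^{0}}))$), they are exactly the eigenspaces, hence orthogonal. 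Consequently the vector representing $\alpha|_{S^{0}}$ under $(\cdot,\cdot)$ lies on the line $\Lie(T_{[\alpha]})$, and the normalization $\langle\alpha,\check\alpha\rangle=2$ forces $\check\alpha$ to be precisely the coroot of $\alpha|_{S^{0}}$ from Subsection \ref{SS:disconnected}. Pairing this identity with an arbitrary $\lambda\in X^{\ast}(S^{0})$ gives $\frac{2(\lambda,\alpha|_{S^{0}})}{(\alpha|_{S^{0}},\alpha|_{S^{0}})}=\langle\lambda,\check\alpha\rangle\in\mathbb{Z}$, so $\alpha|_{S^{0}}\in\Psi_{S^{0}}$ and $\alpha\in\Psi_{S}$, which is axiom (1); and since the action of $n_{\alpha}$ on $X^{\ast}(S)$ is $\lambda\mapsto\lambda-\langle\lambda,\check\alpha\rangle\alpha$, it is literally the map $s_{\alpha}$ of Definition \ref{D:affineRS}.

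For axiom (2), apply Proposition \ref{P:affineRD} to $H$: $R(H,S)$ is stable under $W_{small}(H,S)$, and in particular under each reflection $s_{\alpha}$ coming from an infinite root $\alpha$. Since $n_{\alpha}$ normalizes $S$, conjugation by it preserves the identity component $S^{0}$, hence carries infinite roots to infinite roots (and finite roots to finite roots); therefore $s_{\alpha}(R)=R$, which is axiom (2). Finally, axioms (4) and (5) only describe the shape of the set $\{\beta\in R:\beta|_{S^{0}}\in\mathbb{Z}\cdot\alpha|_{S^{0}}\}$, and this is exactly what the two cases in the proof of Lemma \ref{L:density}, applied to $H$, produce, with the character $\delta$ of Definition \ref{D:affineRS} realized by the character $\beta_{0}$ constructed there; alternatively one may simply invoke the footnote to Definition \ref{D:affineRS}, that axioms (1)--(3) already imply (4) and (5).

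I expect the main obstacle to be exactly the coroot identification of the second paragraph. The delicate point is that the rank-one subgroup $G_{[\alpha]}$ (and its $\mathfrak{sl}_{2}$) lives inside $H$, whereas the inner product on $X^{\ast}(S^{0})$ appearing in Definition \ref{D:affineRS} is manufactured from a metric on the potentially much larger group $G$; one must verify that the group-theoretically defined topological coroot agrees with the metrically defined one, and the leverage that makes this work is that the reflection is implemented by an element $n_{\alpha}$ of $G$ normalizing $S$, so that it acts isometrically on $\Lie(S^{0})$.
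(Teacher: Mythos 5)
Your proof is correct and follows essentially the same route as the paper, whose own proof is a one-line citation of Lemma \ref{L:infinite1} (for axiom (3)), Lemma \ref{L:reflection} (for axioms (1) and (2)), and the two cases in the proof of Lemma \ref{L:density} (for axioms (4) and (5)). The only content you add is to make explicit the identification of the group-theoretic coroot with the metric one via the isometric action of $n_{\alpha}$ on $\Lie(S^{0})$, which is exactly the step the paper leaves implicit in invoking Lemma \ref{L:reflection}.
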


\begin{proof}
This follows from Lemma \ref{L:infinite1}, Lemma \ref{L:reflection}, and the proof of Lemma \ref{L:density} 
(or from the description of the set $(R-R_{0},R')$ given in Subsection \ref{SS:irreducibleARS}).
\end{proof}

\begin{definition}\label{D:Weyl-R'}
For an affine root system $R$ on $S$, write $W_{R}$($\subset\Aut(S)$) for the finite group generated by 
$s_{\alpha}$ ($\alpha\in R$), and call it the Weyl group of $R$. 

Write $R'=\{\alpha|_{S^{0}}: \alpha\in R\}.$
\end{definition}

By Definition \ref{D:affineRS}, it is clear that $R'$ is a root system in $X^{\ast}(S^{0})$ in the sense of 
\cite[Definition 2.2]{Yu-dimension}.    

\smallskip 

Choose a simple system $\{\alpha'_{1},\cdots,\alpha'_{t}\}$ of the root system $R'$. For each $i$, 
choose $\alpha_{i}\in R$ such that $\alpha_{i}|_{S^{0}}=\alpha'_{i}.$ Take $s_{0}\in S'$ such that 
$$\alpha_{1}(s_0)=\cdots=\alpha_{t}(s_0)=1.$$ Let $$R_{s_0}=\{\alpha\in R:\alpha(s_0)=1\}.$$ Then, 
the map $$R_{s_0}\rightarrow R',\quad\alpha\mapsto\alpha'=\alpha|_{S^{0}}$$ is an isomorphism of 
root systems. 

For each $\alpha'\in R'$ with $\frac{1}{2}\alpha'\not\in R'$, set $$R_{1,\alpha'}=\{\beta\in R:
\beta|_{S^{0}}=\alpha'\},\quad R_{2,\alpha'}=\{\beta\in R(G,A):\beta|_{S^{0}}=2\alpha'\}$$ and 
$$R_{\alpha'}=R_{1,\alpha'}\cup R_{2,\alpha'}.$$ Write $$m_{1,\alpha'}=|R_{1,\alpha'}|,\quad 
m_{2,\alpha'}=|R_{2,\alpha'}|$$ and $m_{\alpha'}=m_{1,\alpha'}+2m_{2,\alpha'}$. 

Write $R'=R\cap\Psi_{S^{0}}^{+}$. 

\begin{definition}\label{D:delta-AR}
Set $$\delta=\delta_{R}=\frac{1}{2}\sum_{\alpha'\in R'^{+},\textrm{reduced}}m_{\alpha'}\alpha.$$ 

Define $$A_{R}=\frac{1}{|W_{R'}|}\sum_{w\in W_{R_{s_0}}}\epsilon(w)[\delta_{R}-w\delta_{R}].$$  
\end{definition}

The following lemma could be shown the same as for Lemma \ref{L:AR}. 

\begin{lemma}\label{L:AR2}
$2\delta_{R}$ and $\delta_{R}-w\delta_{R}$ ($W\in W_{R_{s_0}}$) are in $X^{\ast}(S)$ and they are 
independent of the choice of $\alpha$ for $\alpha'\in R'$.  

$A_{R}\in\mathbb{Q}[X^{\ast}(S)]$ depends only on $R$, not on the choice of $\alpha_1,\dots,\alpha_{t}$ 
(or of $s_0$).
\end{lemma}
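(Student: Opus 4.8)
The plan is to run the argument of Lemma \ref{L:AR} essentially verbatim, the only genuinely new ingredient being an intrinsic replacement for Proposition \ref{P:m-alpha}. So I would first record that, for a reduced root $\alpha'\in R'$ and any two lifts $\alpha,\tilde\alpha\in R_{1,\alpha'}$, one has $[m_{\alpha'}\alpha]=[m_{\alpha'}\tilde\alpha]$ in $\mathbb{Q}[X^{\ast}(S)]$. This should be read straight off Definition \ref{D:affineRS}: if $2\alpha|_{S^{0}}$ is not the restriction of a root, axiom (4) produces $\delta\in X^{\ast}(S/S^{0})$ of order $m$ with $R_{1,\alpha'}=\{\alpha+k\delta:0\le k\le m-1\}$, whence $m_{1,\alpha'}=m_{\alpha'}=m$ and $m_{\alpha'}(\tilde\alpha-\alpha)$ is a multiple of $m\delta=0$; if $2\alpha|_{S^{0}}$ is the restriction of a root, axiom (5) produces $\delta$ of order $2m$ with $m_{1,\alpha'}=2m$, $m_{2,\alpha'}=m$, so $m_{\alpha'}=4m$, and again $m_{\alpha'}(\tilde\alpha-\alpha)$ is a multiple of $m_{\alpha'}\delta=4m\delta=0$. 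As a consequence the element $2\delta_{R}$ defined in Definition \ref{D:delta-AR} is a sum of genuine characters $m_{\alpha'}\alpha$, hence lies in $X^{\ast}(S)$, and by what was just shown it does not depend on the chosen lifts (nor on $s_{0}$).

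Next I would treat $\delta_{R}-w\delta_{R}$ for $w\in W_{R_{s_{0}}}$ exactly as in the classical computation of the Weyl vector. Choosing a reduced expression $w=s_{\alpha_{i_{1}}}\cdots s_{\alpha_{i_{k}}}$ and telescoping, one reduces to a single simple reflection, for which $s_{\alpha_{i}}$ negates $\alpha_{i}$ (axiom (2) together with $s_{\alpha_{i}}(\alpha_{i})=-\alpha_{i}$) and permutes the remaining reduced positive roots of $R'$, the multiplicity $\alpha'\mapsto m_{\alpha'}$ being $W_{R'}$-invariant; combined with the lift-independence from the first paragraph this yields, inside $X^{\ast}(S)$, the identity $\delta_{R}-s_{\alpha_{i}}\delta_{R}=m_{\alpha'_{i}}\alpha_{i}$, and hence in general that $\delta_{R}-w\delta_{R}$ is a $\mathbb{Z}$-linear combination of the characters $m_{\alpha'}\alpha$ indexed by the inversions of $w$. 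In particular $\delta_{R}-w\delta_{R}\in X^{\ast}(S)$, each such element is independent of the lifts and of $s_{0}$ (every summand being so by the first paragraph), and therefore $A_{R}=\frac{1}{|W_{R'}|}\sum_{w\in W_{R_{s_{0}}}}\epsilon(w)[\delta_{R}-w\delta_{R}]\in\mathbb{Q}[X^{\ast}(S)]$ depends only on $R$.

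I expect the only delicate point to be bookkeeping rather than substance: one must check that the classical facts used in the telescoping step transfer through the root-system isomorphism $R_{s_{0}}\xrightarrow{\ \sim\ }R'$, and in particular that the non-reduced ($\BC$) factors of $R'$ cause no trouble. Here one invokes the convention (as in Subsection \ref{SS:Weyl}) that a simple system of a $\BC_{l}$ factor is taken to be a simple system of the associated $\B_{l}$, so that each $\alpha'_{i}$ is a reduced root and $s_{\alpha'_{i}}$ indeed permutes the set of reduced positive roots of $R'$ with $\alpha'_{i}$ removed, $2\alpha'_{i}$ being non-reduced and therefore absent from the sum defining $\delta_{R}$. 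Once this is in place the argument is word-for-word that of Lemma \ref{L:AR}.
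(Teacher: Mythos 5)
Your proposal is correct and follows the same route the paper intends: the paper simply remarks that Lemma \ref{L:AR2} "could be shown the same as for Lemma \ref{L:AR}", and your argument is exactly that proof, with the one genuinely necessary adjustment made explicit — replacing the Lie-theoretic Proposition \ref{P:m-alpha} by the identity $[m_{\alpha'}\alpha]=[m_{\alpha'}\tilde{\alpha}]$ read off from axioms (4) and (5) of Definition \ref{D:affineRS}. The telescoping/inversion-set computation showing $\delta_{R}-w\delta_{R}$ is an integral combination of the $m_{\alpha'}\alpha$ is the "basic properties of root system and Weyl group" the paper invokes in Lemma \ref{L:AR}, so no further comment is needed.
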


\begin{lemma}\label{L:AR3}
We have $$A_{R}=\frac{1}{|W_{R}|}\sum_{w\in W_{R}}\epsilon(w)[\delta_{R}-w\delta_{R}].$$
\end{lemma}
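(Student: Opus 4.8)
The plan is to compare the defining sum for $A_{R}$ over the smaller index set $W_{R_{s_{0}}}$ with the desired sum over the full Weyl group $W_{R}$, using the exact sequence relating $W_{R}$ to the Weyl group $W_{R'}$ of the base root system. First I would note that, just as in Lemma \ref{L:split} for the full Weyl group $W(G,S)$, there is an exact sequence
\begin{equation*}
1\rightarrow K\rightarrow W_{R}\rightarrow W_{R'}\rightarrow 1,
\end{equation*}
where $K$ is the subgroup of $W_{R}$ acting trivially on $S^{0}$ (equivalently, fixing every $\alpha'\in R'$), and that $W_{R_{s_{0}}}$ maps isomorphically onto $W_{R'}$ under the restriction map $\alpha\mapsto\alpha|_{S^{0}}$ (this is exactly the isomorphism of root systems $R_{s_{0}}\rightarrow R'$ established just above Definition \ref{D:delta-AR}). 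Thus $W_{R_{s_{0}}}$ is a set-theoretic section of $W_{R}\to W_{R'}$, and $|W_{R}|=|K|\cdot|W_{R'}|$, so $W_{R}=\coprod_{v\in W_{R_{s_{0}}}}Kv$.

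Next I would carry out the averaging. For each $v\in W_{R_{s_{0}}}$ and each $u\in K$, I want to show $\epsilon(uv)[\delta_{R}-uv\delta_{R}]=\epsilon(v)[\delta_{R}-v\delta_{R}]$. For this, observe that any element $u\in K$ fixes $S^{0}$ pointwise, hence acts on $X^{\ast}(S)$ by adding multiples of characters trivial on $S^{0}$; but the key point is that $u$ fixes $\delta_{R}$: indeed, $2\delta_{R}$ is a sum of the characters $m_{\alpha'}\alpha$, and from the structure of the affine root system (the description in Subsection \ref{SS:irreducibleARS}, and Proposition \ref{P:m-alpha}) each $m_{\alpha'}\alpha$ is fixed by any transvection/element of $K$, since $K$ permutes the roots over a fixed $\alpha'$ and $[m_{\alpha'}\alpha]$ is independent of the representative in $R_{1,\alpha'}$. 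Hence $u\delta_{R}=\delta_{R}$ and $\epsilon(u)=1$ (elements of $K$ are products of transvections, which have sign $1$, analogous to the $\epsilon'$ normalization in the proof of Lemma \ref{L:polynomial-matrix}), giving $\epsilon(uv)=\epsilon(v)$ and $\delta_{R}-uv\delta_{R}=\delta_{R}-v\delta_{R}$. Therefore
\begin{equation*}
\frac{1}{|W_{R}|}\sum_{w\in W_{R}}\epsilon(w)[\delta_{R}-w\delta_{R}]=\frac{1}{|K|\,|W_{R'}|}\sum_{v\in W_{R_{s_{0}}}}|K|\,\epsilon(v)[\delta_{R}-v\delta_{R}]=A_{R}.
\end{equation*}

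The main obstacle I anticipate is the verification that $K$ fixes $\delta_{R}$ and lies in $\ker\epsilon$; this is where one must use the precise structure of the affine root datum rather than formal manipulation. Concretely, one has to rule out that a transvection $s_{\alpha,\xi}$ shifts $\delta_{R}$ by a nonzero character of $S/S^{0}$, and confirm the sign bookkeeping, so that the "extra" pieces of $W_{R}$ beyond a copy of $W_{R'}$ contribute only as trivial repetitions. Granting the analogues of Lemma \ref{L:split} and Proposition \ref{P:m-alpha} in the abstract setting of Definition \ref{D:affineRS} (which, per the footnote, reduces to the irreducible classification), the identity follows as above; one should also remark, as in Lemma \ref{L:AR2}, that every term $\delta_{R}-w\delta_{R}$ with $w\in W_{R}$ is a genuine character of $S$, so the right-hand side is a well-defined element of $\mathbb{Q}[X^{\ast}(S)]$.
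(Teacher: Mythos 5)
Your proof is correct, and its overall shape matches the paper's: both arguments collapse the sum over $W_{R}$ to the sum over $W_{R_{s_{0}}}\cong W_{R'}$ by showing that the summand $\epsilon(w)[\delta_{R}-w\delta_{R}]$ depends only on the image of $w$ in $W_{R'}$. Where you differ is in how that key fact is established. The paper does it in one line from the identity $\delta_{R}-w\delta_{R}=\sum_{\beta\in R^{+}\cap(-w\cdot R^{+})}\beta$ with $R^{+}=\{\beta\in R:\beta|_{S^{0}}\in R'^{+}\}$, observing that the index set $R^{+}\cap(-w\cdot R^{+})$ is cut out by conditions on restrictions to $S^{0}$ and hence depends only on the image $\bar{w}\in W_{R'}$. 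You instead work with the kernel $K$ of $W_{R}\to W_{R'}$ and prove directly that $K$ fixes $\delta_{R}$ (via Proposition \ref{P:m-alpha}, equivalently axioms (4) and (5) of Definition \ref{D:affineRS}) and lies in $\ker\epsilon$. Your route makes explicit the structural input (normality of $K$; $[m_{\alpha'}\alpha]$ independent of the lift $\alpha$ of $\alpha'$) that the paper's one-line identity quietly relies on, at the cost of being longer. One small repair to your write-up: from $u\delta_{R}=\delta_{R}$ alone you cannot conclude $uv\delta_{R}=v\delta_{R}$ --- you would need $u$ to fix $v\delta_{R}$, and an element of $K$ translates a character by an element of $X^{\ast}(S/S^{0})$ depending on its restriction to $S^{0}$, which for $v\delta_{R}$ need not vanish. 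Since $K$ is normal and you have in fact shown that \emph{every} element of $K$ fixes $\delta_{R}$, write $uv=v(v^{-1}uv)$ with $v^{-1}uv\in K$ to get $uv\delta_{R}=v\delta_{R}$; with that adjustment the computation closes.
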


\begin{proof}
Write $R^{+}=\{\beta\in R:\beta|_{S^{0}}\in R'^{+}\}$. Due to $$\delta_{R}-w\delta_{R}=
\sum_{\beta\in R^{+}\cap-w\cdot R^{+}}\beta,$$ we see that $\delta_{R}-w\delta_{R}$ ($w\in W_{R}$) depends 
only on the image of projection of $w$ in $W_{R'}$. Thus, $$A_{R}=\frac{1}{|W_{R}|}\sum_{w\in W_{R}}
\epsilon(w)[\delta_{R}-w\delta_{R}].$$  
\end{proof} 

\begin{definition}\label{D:character2}
For an affine root system $R$ on $S$, and a finite group $W$ between $W_{R}$ and $\Aut(S)$, set 
\[F_{R,W}=\frac{1}{|W|}\sum_{\gamma\in W}\gamma\cdot A_{R}.\] 
\end{definition}

The following proposition is a generalization of \cite[Proposition 3.8]{Yu-dimension}. 
\begin{proposition}\label{P:compact-root system}
Given a compact Lie group $G$, let $H_{1},H_2,\dots,H_{s}\subset G$ ($s\geq 2$) be a collection 
of closed subgroups of $G$. For non-zero constants $c_1,\cdots,c_{s}$, 
\[\sum_{1\leq i\leq s} c_{i}\mathscr{D}_{H_i}=0\] if and only if: for any closed commutative 
connected subset $S'$ of $G$, \[\sum_{1\leq j\leq t}\frac{c_{i_{j}}}{|G_{i_{j}}/G_{i_{j}}^{0}|}
F_{\Phi_{j},\Gamma^{\circ}}|_{S'}=0.\] Here $$\Gamma^{\circ}=N_{G}(S')/Z_{G}(S'),$$ 
$\{H_{i_{j}}|i_1\leq i_2\leq \cdots\leq i_{t}\}$ are all subgroups amongst $\{H_{i}|1\leq i\leq s\}$ 
with $H_{i_{j}}$ contains a maximal commutative connected subset conjugate to $S'$ and $\Phi_{j}$ 
is the affine root system of $H_{i_{j}}$ with respect to $S=s^{-1}S'$ ($s\in S'$). If a subgroup 
$H_{i}$ contains $k$ maximal commutative connected subsets conjugate to $S'$ in $G$, then $H_{i}$ 
appears $k$ times in the summation (however the corresponding affine root systems on $S'$ might be 
different). 
\end{proposition}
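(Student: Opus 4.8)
The plan is to follow the strategy of the connected case \cite[Proposition 3.8]{Yu-dimension}, with the Weyl integration formula of Proposition \ref{P:integration2} and the characters $F_{R,W}$ of Subsection \ref{SS:Weyl} playing the roles of their classical counterparts.

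First I would rewrite each dimension datum as an integral. Fixing $\rho\in\widehat G$ with character $\chi_\rho$, one has $\mathscr D_{H_i}(\rho)=\int_{H_i}\chi_\rho(h)\,dh$, and applying Proposition \ref{P:integration2} to $H_i$ — choosing, for each connected component of $H_i$, a maximal commutative connected subset $S'$ with $S=\langle S'\rangle$ — gives $\mathscr D_{H_i}(\rho)=\frac1{|H_i/H_i^{0}|}\sum_{S'}\int_{S'}\chi_\rho(s)D_{S'}(s)\,ds$, where $D_{S'}$ is the density function of $H_i$ on $S'$. Since $\chi_\rho$ is a class function on $G$ and $\Gamma^{\circ}=N_G(S')/Z_G(S')$ acts on $S'$ preserving $ds$, I may replace $D_{S'}$ inside the integral by its $\Gamma^{\circ}$-average; by Proposition \ref{P:density2} applied inside $H_i$, together with the fact (Lemma \ref{L:reflection}) that the reflections generating $W_{R_{s_0}}$ and $W_R$ are realized in $N_{G^{0}}(S')$ and hence lie in $\Gamma^{\circ}$, this average equals $F_{\Phi_{S'},\Gamma^{\circ}}$ on $S'$, where $\Phi_{S'}$ is the affine root system of $H_i$ with respect to $S$. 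Collecting the chosen subsets into $G$-conjugacy classes — so that, by Proposition \ref{P:compact-conjugacy}, each component of $H_i$ contributes once for the class of its maximal commutative connected subset, which produces the multiplicity $k$ of the statement — yields the identity $\sum_{i}c_i\mathscr D_{H_i}(\rho)=\sum_{[S']}\int_{S'}\chi_\rho(s)\,G_{S'}(s)\,ds$, where $G_{S'}=\sum_{j}\frac{c_{i_j}}{|H_{i_j}/H_{i_j}^{0}|}F_{\Phi_j,\Gamma^{\circ}}$ and $[S']$ runs over the finitely many $G$-conjugacy classes of commutative connected subsets of $G$ that occur as a maximal commutative connected subset of some $H_i$. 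I would note in passing that $G_{S'}$ is well defined — independent of which $G$-conjugate of the subset one takes inside $H_{i_j}$, since two choices differ by an element of $N_G(S')$ which permutes the affine root systems and is absorbed into the $\Gamma^{\circ}$-average — and that for an $S'$ occurring in no $H_i$ both sides of the claimed equivalence are vacuous.

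With this identity in hand the implication ``if all $G_{S'}|_{S'}=0$ then $\sum_i c_i\mathscr D_{H_i}=0$'' is immediate. For the converse I would argue by descending induction on $\dim S'$. Assuming $\sum_i c_i\mathscr D_{H_i}=0$, the $G$-conjugation-invariant measure on $G$ obtained by conjugation-averaging $\sum_i c_i\cdot(\text{pushforward of the Haar measure of }H_i)$ is orthogonal to every $\chi_\rho$; since the $\chi_\rho$ span a dense subalgebra of the continuous class functions and this measure is conjugation-invariant, it is zero, and by the first step it equals a positive multiple of $\sum_{[S']}$ of the conjugation-pushforward of $G_{S'}(s)\,ds$ from $S'$. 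Ordering the classes $[S']$ by decreasing $\dim S'$: for a class of maximal dimension, $S'$ is a maximal commutative connected subset of $G$, so its $G$-conjugates fill an entire connected component of $G$ (Proposition \ref{P:compact-conjugacy}) while every other, lower-dimensional, class sweeps out only a proper closed set of smaller dimension; hence on the regular locus of that component the measure reduces to the single term $G_{S'}(s)\,ds$, forcing $G_{S'}|_{S'}=0$ on a dense open subset of $S'$, hence identically, hence $G_{S'}=0$ in $\mathbb{Q}[X^{\ast}(S)]$ by Proposition \ref{P:S-S'}. Removing these top strata and repeating on the complement, where the next batch of classes now occupies the regular locus, exhausts all the $[S']$.

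The hard part will be this last step: making precise that the conjugation-strata attached to inequivalent commutative connected subsets are generically disjoint, so that on a suitable dense open subset of each $S'$ the vanishing of the total measure really does isolate the single contribution $G_{S'}(s)\,ds$ — the delicate point being pairs of subsets of different dimensions whose conjugation-orbit-unions might still overlap. This is the disconnected analogue of the support and genericity analysis underlying \cite[Proposition 3.7]{Yu-dimension} and \cite[Section 3]{Yu-dimension}, and the tools assembled above, namely Propositions \ref{P:compact-conjugacy}, \ref{P:integration2}, \ref{P:density2} and \ref{P:S-S'}, are precisely what is needed to push it through.
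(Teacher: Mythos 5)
Your proposal is correct and follows essentially the same route as the paper, which itself just invokes Propositions \ref{P:integration2} and \ref{P:density2} and refers to the connected-case argument of \cite[Proposition 3.8]{Yu-dimension}; your expansion of the converse via the conjugation-pushforward measures and descending induction on $\dim S'$ is exactly the intended adaptation of that argument. (You also correctly read $|G_{i_j}/G_{i_j}^{0}|$ in the statement as $|H_{i_j}/H_{i_j}^{0}|$.)
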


\begin{proof}
With Propositions \ref{P:integration2} and \ref{P:density2} available, the proof of this proposition is 
similar to the proof of \cite[Proposition 3.8]{Yu-dimension}.  
\end{proof}

\bigskip 

By Proposition \ref{P:compact-root system}, to study equalities and linear relations among dimension 
data of disconnected subgroups, it suffices to study linear relations among the characters $F_{R,\Gamma^{0}}$ 
for a fixed generalized torus $S$ and affine root systems $R$ on $S$. Write $\Psi=\Psi_{S}$ and $\Psi'=
\Psi'_{S}$. First one can reduce it to the case of $\Psi'$ is an irreducible root system and\footnotemark
$$\bigcap_{\alpha'\in\Psi'}\ker\alpha'=1.$$
\footnotetext{This is analogous to the property of root system for a connected compact semisimple Lie group 
of adjoint type.} 

\begin{question}\label{Q:FR}
Let $S$ be a generalized torus with an inner product on its Lie algebra, and with $\Psi'=\Psi_{S^{0}}$ 
an irreducible root system such that $$\bigcap_{\alpha'\in\Psi'}\ker\alpha'=1.$$ How to find all equalities 
and linear relations among the characters $F_{R,W_{\Psi}}$ for sub-affine root systems $R$ of $\Psi=\Psi_{S}$? 
\end{question}

To solve Question \ref{Q:FR}, we need to first classify affine sub-root systems $R\subset\Psi$. As remarked 
in Subsection \ref{SS:AR}, this consists of three parts: (1), classify sub-root systems $R'$ of $\Psi$; (2),
specify the multiplicity $m_{i}$ for an irreducible factor $R'_{i}$ of $R'$ and multiplicities $m_{1,\alpha'}$ 
and $m_{\alpha'}$ for roots $\alpha'\in R'$; (3), specify liftings in $R$ of simple roots of $R'$. 

In the case of $\Psi'=\BC_{n}$, we should associate polynomials to each irreducible sub-affine root systems 
and study their properties and multiplicative and algebraic relations. These polynomials are variants of the 
polynomials $a_{n}$, $b_{n}$, $c_{n}$, $d_{n}$ in the connected case (cf. \cite[Page 2713]{Yu-dimension}), 
subject to operations on indeterminates and coefficients. It looks to us \cite[Proposition 7.2]{Yu-dimension}) 
has a generalization to the affine case. The cases of $\Psi'=\B_{n}$, $\C_{n}$, $\D_{n}$, $\A_{n-1}$ reduce 
to the $\BC_{n}$ case.
 
In case $\Psi'$ is an exceptional irreducible root system, like in \cite{Yu-dimension} we can calculate the 
dominant terms $2\delta_{R}$ and the polynomials $$f_{R}(t)=\frac{1}{|W_{R}|}\sum_{w\in W_{R}}\epsilon(w)
t^{|\delta_{R}-w\delta_{R}|^{2}}.$$ However, these invariants seem too weak in the affine case. We haven't 
thought seriously yet concerning the case of $\Psi'$ is an exceptional irreducible root system.

\section{Compactness of isospectral set}

A conjecture of Osgood-Philipps-Sarnak states that set of closed Riemannian manifolds with a given 
Laplace spectrum (called an {\it isospectral set} of Riemannian manifolds) should be compact 
(\cite{Osgood-Phillips-Sarnak}, \cite{Gordon}). By considering normal homogeneous spaces, 
we proved a result before. 

\begin{theorem}(\cite[Theorem 3.6]{Yu-compactness})\label{T:spectral}
Given a compact Lie group $G$ equipped with a bi-invariant Riemanniann metric $m_0$, up to isometry, 
any collection of isospectral normal homogeneous spaces of the form $(G/H,m_0)$, where $H\subset G$ 
is a closed subgroup, must be finite. That is, for any closed subgroup $H$ of $G$, up to conjugacy, 
there are finitely many closed subgroups $H_1,\cdots,H_{k}$ of $G$ such that the normal homogeneous 
space $(G/H_{j},m_0)$ is isospectral to $(G/H,m_0)$. 
\end{theorem}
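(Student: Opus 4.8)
The plan is to translate the Laplace spectrum of a normal homogeneous space $(G/H,m_0)$ into the language of dimension data, and then to reduce the finiteness assertion to a purely geometric statement about closed subgroups of $G$. By the Peter--Weyl theorem $L^{2}(G/H)$ decomposes over $\widehat G$ with the class $\rho$ occurring with multiplicity $\mathscr{D}_{H}(\rho)=\dim\rho^{H}$, and the Laplace--Beltrami operator of $(G/H,m_0)$, being induced by the Casimir element, acts on the $\rho$-isotypic part by a scalar $c_{\rho}\ge 0$ depending only on $(G,m_0)$. Hence the spectrum of $(G/H,m_0)$ is the multiset in which $c_{\rho}$ occurs with multiplicity $(\dim\rho)\,\mathscr{D}_{H}(\rho)$, and $(G/H_{1},m_0)$ and $(G/H_{2},m_0)$ are isospectral exactly when $\sum_{c_{\rho}=\lambda}(\dim\rho)\mathscr{D}_{H_{1}}(\rho)=\sum_{c_{\rho}=\lambda}(\dim\rho)\mathscr{D}_{H_{2}}(\rho)$ for every $\lambda$. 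Using $\mathscr{D}_{H}(\rho)=\int_{H}\chi_{\rho}$ together with the expansion $k_{t}^{G}(h)=\sum_{\rho}(\dim\rho)e^{-tc_{\rho}}\chi_{\rho}(h)$ of the heat kernel of $(G,m_0)$, the heat trace of $(G/H,m_0)$ equals a fixed multiple of $\int_{H}k_{t}^{G}(h)\,d\vol_{H}(h)$.

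Next I would extract $\dim H$ and $\vol(H,m_0|_{H})$ from the spectrum. Isospectral manifolds have the same heat trace for all $t>0$, and its small-time asymptotics $\Theta_{G/H}(t)\sim(4\pi t)^{-(\dim G-\dim H)/2}\vol(G/H,m_0)\bigl(1+O(t)\bigr)$ — obtained by localizing $\int_{H}k_{t}^{G}$ near the identity of $H$ — determine $\dim H$ from the leading power of $t$ and $\vol(G/H,m_0)$ from the leading coefficient; since $\vol(G,m_0)=\vol(H,m_0|_{H})\cdot\vol(G/H,m_0)$ by the coarea formula for $H\hookrightarrow G\to G/H$, this pins down $\vol(H,m_0|_{H})$ as well. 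Thus every member of an isospectral collection of normal homogeneous spaces $(G/H,m_0)$ has the same $d:=\dim H$ and the same $V:=\vol(H,m_0|_{H})$, and it suffices to prove the geometric finiteness statement: for fixed $G$, $m_0$, $d$, $V$ there are only finitely many conjugacy classes of closed subgroups $H\subseteq G$ with $\dim H=d$ and $\vol(H,m_0|_{H})\le V$.

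For the geometric statement I would argue through the structure of $H$. The subalgebra $[\mathfrak h,\mathfrak h]$ is semisimple, so by a theorem of Dynkin it lies in one of finitely many $G$-conjugacy classes; fixing one, the center $\mathfrak z(\mathfrak h)$ is a rational toral subalgebra of the centralizer $Z_{\mathfrak g}([\mathfrak h,\mathfrak h])$ whose associated subtorus has volume $\le V$, and rational subtori of bounded volume fall into finitely many conjugacy classes (they correspond to integral vectors of bounded norm in a fixed lattice). Hence there are finitely many conjugacy classes of identity components $H^{0}=\exp\mathfrak h$. One then needs a uniform positive lower bound $\vol(H^{0},m_0|_{H^{0}})\ge c(G,m_0,d)$; granting this, $[H:H^{0}]=V/\vol(H^{0})$ is bounded, and for a fixed $H^{0}$ the closed subgroups $H$ with that identity component and bounded index form, up to conjugacy, a finite set, because finite subgroups of bounded order of the compact group $N_{G}(H^{0})/H^{0}$ do. (The affine root systems of Section~\ref{S:ARS} are the natural bookkeeping device here: such an $H$ is recorded, up to bounded ambiguity, by a generalized Cartan subgroup together with its affine root system, of which only finitely many occur.) Finitely many conjugacy classes of $H$ give finitely many isometry classes of $(G/H,m_0)$, which is the theorem.

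The heat-trace input above is standard once the translation into dimension-datum language is in place; the real content is the geometric finiteness statement, and the main obstacle there is the disconnected case — establishing the uniform lower bound for $\vol(H^{0})$ and the finiteness, up to conjugacy, of the closed overgroups of $H^{0}$ of bounded index. Everything else reduces to Dynkin's theorem on semisimple subalgebras, the finiteness of rational subtori of bounded volume, and the finiteness up to conjugacy of finite subgroups of bounded order in a compact Lie group.
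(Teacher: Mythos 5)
Your argument is correct in outline, but it proves the theorem by a genuinely different route than the one the paper relies on. The paper simply imports this statement from \cite[Theorem 3.6]{Yu-compactness}, and the method there (visible in the proof of Theorem \ref{T:spectral2} above) runs through the compactness theorem for dimension data: an infinite sequence of pairwise non-conjugate subgroups $H_n$ admits a subsequence sandwiched as $[H^{0},H^{0}]\subset g_jH_{n_j}g_j^{-1}\subset H$ with $\mathscr{D}_{H_{n_j}}\to\mathscr{D}_{H}$, and this convergence is played off against the spectral data (e.g.\ the dimension, which the spectrum determines) to reach a contradiction. You instead extract the two invariants $\dim H$ and $\vol(H,m_0|_H)$ from the short-time heat-trace asymptotics of $(G/H,m_0)$ and then prove a purely geometric finiteness statement: only finitely many conjugacy classes of closed subgroups of $G$ have prescribed dimension and volume bounded by $V$. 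This trades the Montgomery--Zippin-type convergence machinery for three classical finiteness facts (semisimple subalgebras up to conjugacy, rational subtori of bounded covolume, finite subgroups of bounded order up to conjugacy), which makes your proof more self-contained and quantitative, at the price of not producing the ``limit subgroup'' $H$ that the paper's approach needs anyway for the variable-metric generalization in Theorem \ref{T:spectral2}.

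One remark on the step you flag as ``granting this'': the uniform lower bound $\vol(H^{0})\geq c(G,m_0,d)$ does not need a separate argument (monotonicity for totally geodesic submanifolds, say), because it already follows from the part of your proof that precedes it. You have shown there are only finitely many $G$-conjugacy classes of \emph{connected} closed subgroups of dimension $d$ and volume at most $V$; since conjugation by a bi-invariant isometry preserves volume, $\vol(H^{0})$ takes only finitely many positive values on this set, and its minimum is the desired constant. With that observation the bound $[H:H^{0}]\leq V/\vol(H^{0})$ is uniform over the relevant $H^{0}$, and the rest of your count of disconnected extensions (finite subgroups of bounded order in the compact group $N_{G}(H^{0})/H^{0}$, up to conjugacy) goes through as written.
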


Here we prove a generalization of the above theorem by allowing the metric on $G$ to vary.

\begin{theorem}\label{T:spectral2}
Given a spectrum and a compact semisimple Lie group $G$, there are only finitely many conjugacy 
classes of closed subgroups $H$ of $G$ such that there exists a bi-invariant Riemanniann metric 
$m$ on $G$ which induces a normal homogeneous space $(G/H,m)$ with Laplace spectrum equals to 
the given spectrum.
\end{theorem}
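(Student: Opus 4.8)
The plan is to combine the Peter--Weyl description of the Laplace spectrum of a normal homogeneous space with a Gromov compactness argument for the underlying Riemannian manifolds; the fixed-metric statement (Theorem~\ref{T:spectral}) then appears as the special case in which the metric does not move, and its proof runs along the same lines. Since $G$ is compact semisimple, $\mathfrak{g}_{0}=\bigoplus_{i=1}^{r}\mathfrak{g}_{0,i}$ decomposes into finitely many simple ideals, and a bi-invariant Riemannian metric on $G$ is encoded by a tuple $\mathbf{c}=(c_{1},\dots,c_{r})\in\mathbb{R}_{>0}^{r}$ via $m_{\mathbf{c}}=\sum_{i}c_{i}(-B_{i})$, where $B_{i}$ is the Killing form of $\mathfrak{g}_{0,i}$. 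For such $m_{\mathbf{c}}$, the Peter--Weyl theorem gives $L^{2}(G/H,m_{\mathbf{c}})=\bigoplus_{\rho\in\widehat{G}}V_{\rho}^{\oplus\mathscr{D}_{H}(\rho)}$ as a $G$-representation, and the Laplace--Beltrami operator acts on the $\rho$-isotypic part by the scalar $\mathrm{Cas}_{\rho}(\mathbf{c})=\sum_{i}c_{i}^{-1}\kappa_{i}(\rho)$, where $\kappa_{i}(\rho)\ge 0$ is the Killing-normalised Casimir eigenvalue of the $i$-th component of $\rho$ (so $\kappa_{i}(\rho)=0$ exactly when that component is trivial); hence this eigenvalue occurs with multiplicity $(\dim V_{\rho})\mathscr{D}_{H}(\rho)$. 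If the prescribed spectrum is $\{0\}$ then $G/H$ is a point, so $H$ is open in $G$ and there are only finitely many such $H$; assume from now on that the prescribed spectrum $\Sigma$ is infinite, and write $\mu_{1}>0$ for its smallest positive element.

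Next I would read geometric invariants off $\Sigma$. The short-time heat-trace expansion $\sum_{j}e^{-t\mu_{j}}\sim(4\pi t)^{-n/2}(a_{0}+a_{1}t+\cdots)$ shows that $n=\dim(G/H)$ is determined by the leading power of $t$, so $\dim H=\dim G-n$ is the same for every closed subgroup $H$ that realises $\Sigma$ with some bi-invariant metric; it also shows $a_{0}=\mathrm{vol}(G/H,m_{\mathbf{c}})=:v_{0}>0$ is the same positive number for all such $(H,m_{\mathbf{c}})$. Moreover a normal homogeneous metric of a compact group is a Riemannian submersion image of a bi-invariant metric, so by O'Neill's formula $(G/H,m_{\mathbf{c}})$ has nonnegative sectional curvature, in particular $\mathrm{Ric}\ge 0$; and Cheng's eigenvalue-diameter estimate then gives $\mathrm{diam}(G/H,m_{\mathbf{c}})\le\sqrt{C(n)/\mu_{1}}=:D$. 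Thus every Riemannian manifold $(G/H,m_{\mathbf{c}})$ realising $\Sigma$ has fixed dimension $n$, fixed volume $v_{0}>0$, diameter at most $D$, and $\mathrm{Ric}\ge 0$; by Gromov's compactness theorem this family is precompact in the Gromov--Hausdorff topology, and the uniform positive volume lower bound means there is no collapse.

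Now suppose, for contradiction, that there are infinitely many pairwise non-$G$-conjugate closed subgroups $H_{k}$ ($k\ge1$), each with a bi-invariant metric $m_{\mathbf{c}_{k}}$ making $(G/H_{k},m_{\mathbf{c}_{k}})$ have spectrum $\Sigma$. After passing to a subsequence, $(G/H_{k},m_{\mathbf{c}_{k}})$ converges without collapse to a closed $n$-dimensional Riemannian manifold $X$; since each $G/H_{k}$ is $G$-homogeneous and the $G$-actions are equicontinuous (by isometries, $G$ compact), the limiting $G$-action on $X$ is again transitive, so $X=G/H_{\infty}$ for a closed subgroup $H_{\infty}\le G$ with $\dim H_{\infty}=\dim G-\dim X=\dim G-n=\dim H_{k}$. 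Conjugating each $H_{k}$ (which is harmless, as we count conjugacy classes) so that the base coset $eH_{k}$ approximates a fixed $x_{0}\in X$ with $\mathrm{Stab}_{G}(x_{0})=H_{\infty}$, one sees that any accumulation point $h=\lim h_{k}$ of elements $h_{k}\in H_{k}$ satisfies $hx_{0}=x_{0}$, i.e. lies in $H_{\infty}$; hence for $k\gg0$ the subgroup $H_{k}$ lies in an arbitrarily small neighbourhood of the closed subgroup $H_{\infty}$. By the local rigidity of closed subgroups of a compact Lie group (Montgomery--Zippin), such $H_{k}$ is $G$-conjugate to a closed subgroup $H_{k}'\le H_{\infty}$; since $\dim H_{k}'=\dim H_{k}=\dim H_{\infty}$, we have $(H_{k}')^{0}=H_{\infty}^{0}$, so $H_{k}'$ is one of the at most $|H_{\infty}/H_{\infty}^{0}|$ subgroups of $H_{\infty}$ containing $H_{\infty}^{0}$. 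Thus $\{H_{k}:k\gg0\}$ meets only finitely many $G$-conjugacy classes, contradicting the choice of the $H_{k}$; this proves the theorem. (If one prefers, Theorem~\ref{T:spectral} applied to $(G/H_{\infty})$ with a bi-invariant metric inducing the limit metric on $X$ can be invoked in place of the final counting of subgroups containing $H_{\infty}^{0}$.)

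I expect the main obstacle to be the convergence step: showing that a non-collapsed Gromov--Hausdorff limit of the normal homogeneous spaces $(G/H_{k},m_{\mathbf{c}_{k}})$ is again a single $G$-homogeneous space $G/H_{\infty}$ of the same dimension, with the limiting $G$-action transitive and the quotient metric still of normal homogeneous type. The delicate subtlety is that the bi-invariant metrics $m_{\mathbf{c}_{k}}$ themselves need not converge --- some $c_{i}^{(k)}$ can tend to $\infty$ when $\mathfrak{g}_{0,i}\not\subseteq\mathfrak{h}_{k}$, yet the induced metric on $G/H_{k}$ stays uniformly bounded (for instance $(G_{1}\times G_{1})/\Delta G_{1}$ with metric $(c_{1},c_{2})$ only sees $c_{1}^{-1}+c_{2}^{-1}$) --- so one must work with the effective quotient metrics rather than with $\mathbf{c}_{k}$, and verify that the class of such quotient metrics is closed under these limits and that the Laplace eigenvalues vary continuously along the convergence. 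Once these points are secured, the dimension bookkeeping for $H_{\infty}$ and the Montgomery--Zippin rigidity step are routine, and the fixed-metric Theorem~\ref{T:spectral} is recovered as the degenerate case in which no $c_{i}$ moves.
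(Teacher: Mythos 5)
Your strategy (heat trace to pin down $\dim G/H$ and $\operatorname{vol}$, O'Neill plus Cheng to bound the diameter, Gromov--Hausdorff precompactness, then Montgomery--Zippin rigidity of nearby subgroups) is genuinely different from the paper's argument, which is purely representation-theoretic: the paper takes a hypothetical infinite sequence $H_n$, applies the compactness theorem for dimension data from \cite{Yu-compactness} to sandwich $[H^0,H^0]\subset H_n\subset H$ with $\mathscr{D}_{H_n}\to\mathscr{D}_H$, and then analyzes the limits of the metric coefficients $a_i^{(n)}$ on each simple factor directly, showing (the Claim) that factors with $a_i^{(n)}\to\infty$ must be absorbed into $H\cdot\prod_{j\le v}G_j$ (else eigenvalues $\tfrac{1}{a_i^{(n)}}\chi_i(\rho_i)$ would accumulate at $0$) and that factors with $a_i^{(n)}\to 0$ contribute only trivial representations in any bounded eigenvalue range; a dimension count then gives the contradiction. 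Your outer skeleton (diameter and volume bounds, final counting of subgroups between $H_\infty^0$ and $H_\infty$) is sound, and indeed Theorem~\ref{T:spectral} would be the degenerate case.

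However, there is a genuine gap, and it sits exactly where all the difficulty of the theorem lives: the convergence step that you yourself flag as "the main obstacle." Gromov precompactness gives you a metric-space limit $X$, but to run the rest of your argument you need (i) the $G$-actions on $(G/H_k,m_{\mathbf{c}_k})$ to converge equivariantly, so that the limit carries a transitive $G$-action and $X=G/H_\infty$ is a manifold of dimension $n$, and (ii) the stabilizer convergence $h_k\in H_k$, $h_k\to h$ $\Rightarrow$ $h\in H_\infty$. Both require uniform control of the orbit maps $g\mapsto g\cdot x_k$, i.e. an estimate of the form $d_{G/H_k}(g_1x,g_2x)\le L\,d_G(g_1,g_2)$ with $L$ independent of $k$; but the best available constant is governed by $\max_i c_i^{(k)}$, which is precisely what can blow up. When some $c_i^{(k)}\to\infty$ the family of actions is \emph{not} equicontinuous in any $k$-uniform sense, so "the limiting $G$-action is transitive" and "accumulation points of $h_k$ fix $x_0$" are unproved assertions, not routine consequences. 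Ruling out (or absorbing) exactly these degenerations is the content of the paper's Claim, and your proposal replaces it with an expectation rather than an argument. Until you either (a) prove a uniform Lipschitz bound for the orbit maps by first showing that the factors with $c_i^{(k)}\to\infty$ act trivially on $G/H_k$ for $k\gg 0$ (which is essentially the paper's spectral argument, so you would be re-deriving it), or (b) establish equivariant Fukaya--Yamaguchi convergence for this specific degenerating family together with the identification of the limit group as a quotient of $G$, the proof is incomplete. The remaining steps (heat-trace invariants, Cheng's estimate, Montgomery--Zippin, the count of subgroups containing $H_\infty^0$) are correct as stated.
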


\begin{proof}

First we may assume that $G$ is connected and simply connected. Write $G=G_1\times\cdots G_{s}$ for the 
decomposition of $G$ into simple factors. For each $i$, choose a bi-invariant Riemannian metric $m_{i}$ on 
$G_{i}$. By normalization we may assume that the Laplace operator and Casimir operator coincide on 
$C^{\infty}(G_{i})$ ($1\leq i\leq s$). 

Let $\{(G/H_{n},m_{n}): n\geq 1\}$ be a sequence of normal homogeneous spaces such that the Laplace spectrum 
of each $(G/H_{n},m_{n})$ is equal to a given spectrum, and $H_{n}$ ($n\geq 1$) are non-conjugate to each 
other. Write $$m_{n}=\bigoplus_{1\leq i\leq s}a^{(n)}_{i}m_{i}.$$ By \cite[Theorem 1.1]{Yu-compactness}, 
there exists a closed subgroup $H$ of $G$, a subsequence $\{H_{n_{j}}: j\geq 1\}$ and a sequence $\{g_{j}: 
j\geq 1, g_{j}\in G\}$ such that for all $j\in\mathbb{N}$, \[[H^{0},H^{0}]\subset g_{j}H_{n_{j}}g_{j}^{-1}
\subset H,\] and \[\lim_{j\rightarrow\infty}\mathscr{D}_{H_{n_{j}}}=\mathscr{D}_{H}.\] Substituting 
$\{(G/H_{n},m_{n}): n\geq 1\}$ by a subsequence if necessary we may assume that: for any $n\geq 1$, 
\[[H^{0},H^{0}]\subset H_{n}\subset H,\] and \[\lim_{j\rightarrow\infty}\mathscr{D}_{H_{n}}=
\mathscr{D}_{H}.\] Since $H_{n}$ are assumed to be non-conjugate to each other, at most finitely many of 
them contain $H^{0}$. By removing such exceptions. we may assume that $\dim H_{n}<\dim H$ for all $n$.

We may also assume that each sequence $\{a_{i}^{(n)}: n\geq\}$ converges. Write $$a_{i}=\lim_{n\rightarrow
\infty}a_{i}^{(n)}\in[0,\infty].$$ Without loss of generality we assume that $$a_1=\cdots=a_{u}=0,$$ 
$$0<a_{u+1},\dots,a_{v}<\infty,$$ $$a_{v+1}=\cdots=a_{s}=\infty,$$ where $0\leq u\leq v\leq s$.
Write $$G'=\prod_{u+1\leq i\leq v} G_{i},$$ $$H'=G'\cap H\prod_{1\leq i\leq u}G_{i},$$ and
$$m'=\bigoplus_{u+1\leq i\leq v}a_{i}m_{i}.$$ 

\begin{claim}\label{Cl}
We have $$\prod_{v+1\leq i\leq s} G_{i}\subset H\prod_{1\leq i\leq v} G_{i},$$ and the Laplace 
spectrum of $(G'/H',m')$ equals to the given spectrum.
\end{claim}

Write $$G''=\prod_{1\leq i\leq v} G_{i},\quad G'''=\prod_{v+1\leq i\leq s} G_{i}.$$ Since $$G'''\subset 
HG''$$ by Clail \ref{Cl}, we get $$\dim G/H_{n}>\dim G/H=\dim G''/G''\cap H\geq \dim G'/H'.$$ By 
Claim \ref{Cl}, $(G'/H',m')$ and $(G/H_{n},m_{n})$ are isospectral. As Laplace spectrum determines 
dimension (\cite{Gordon}), we get a contradiction.
\end{proof}

\begin{proof}[Proof of Claim \ref{Cl}] 
Write $\chi_{j}(\rho_{j})$ ($1\leq j\leq s$) for the eigenvalue of Laplace operator associated 
to $(G_{j},m_{j})$ on matrix coefficients of $\rho_{j}\in\widehat{G_{j}}$. We know that: 
$\chi_{j}(\rho_{j})\geq 0$; $\chi_{j}(\rho_{j})=0$ if and only if $\rho_{j}=1$; for any positive 
real number $c$, there are only finitely many $\rho_{j}\in\widehat{G_{j}}$ such that 
$\chi_{j}(\rho_{j})\leq c$. 

Suppose $$\prod_{v+1\leq i\leq s} G_{i}\not\subset H\prod_{1\leq i\leq v} G_{i}.$$ Then, there 
exists a nontrivial irreducible representation $$\rho=\bigotimes_{v+1\leq i\leq s}\rho_{i}$$ 
of $\prod_{v+1\leq i\leq s} G_{i}$ such that $\dim V_{\rho}^{\prod_{v+1\leq i\leq s} G_{i}
\cap H\prod_{1\leq i\leq v}G_{i}}>0$. Take $$0\neq v\in V_{\rho}^{\prod_{v+1\leq i\leq s} G_{i}
\cap H\prod_{1\leq i\leq v}G_{i}}$$ and $0\neq\alpha\in V_{\rho}^{\ast}$. Set 
$$f_{v,\alpha}(g_1,\dots,g_{s})=\alpha((g_{v+1},\dots,g_{s})^{-1}v).$$ Then, $f_{v,\alpha}
\in C^{\infty}(G/H)\subset C^{\infty}(G/H_{n})$ for any $n\geq 1$. The Laplace eigenvalue for 
$f_{v,\alpha}\in C^{\infty}(G/H_{n})$ is equal to $$\sum_{v+1\leq i\leq s}\frac{1}{a_{i}^{(n)}}
\chi_{i}(\rho_{i})>0.$$ When $n\rightarrow\infty$, this value tends to $0$, which is in 
contraduction with the fact that the Laplace spectrum of all $G/H_{n}$ are equal to a given 
spectrum.  

Now we assume $$G'''\subset HG''.$$ Due to $[H^{0},H^{0}]\subset H_{n}$ for any $n\geq 1$, 
we have $G'''\subset H_{n}G''$ for any $n\geq 1$. Each $H_{n}$ is of the form $$H_{n}=
(H_{n}\cap G'')\times\{(\phi_{n}(x),x): x\in G'''\}$$ for some homomorphism $\phi_{n}: 
G'''\rightarrow G''$. Since $G'''$ is semisimple, $$\Hom(G''',G'')/\sim_{G''}$$ 
($\sim_{G''}$ means conjugation action of $G''$ on $\Hom(G''',G'')$ by acting on the 
target of homomorphisms) is a finite set. Replacing $H_{n}$ by a subsequence if necessary, 
we may assume that there exists $\phi\in\Hom(G''',G'')$ such that $\phi_{n}=\phi$ for any 
$n\geq 1$. Moreover we may assume that $$H=(H\cap G'')\times\{(\phi(x),x): x\in G'''\}$$ 
and \[\lim_{n\rightarrow\infty}\mathscr{D}_{H_{n}\cap G''}=\mathscr{D}_{H\cap G''}\] 
(equivalent to say, as dimension data of subgroups of $G''$, or as dimension data of 
subgroups of $G$). Due to $G'''\subset H_{n}G''$, we have $G/H_{n}=G''/G''\cap H_{n}$. 

Let $c$ be a positive real number. Suppose matrix coefficients of $$\rho=\bigotimes_{1\leq i
\leq s}\rho_{i}$$ contributes to Laplace spectrum of $G/H_{n}$ in the eigenvalue scope $[0,c]$. 
Then, $$\sum_{1\leq i\leq s}\frac{1}{a_{i}^{(n)}}\chi_{i}(\rho_{i})\leq c.$$ Due to $a_{i}^{(n)}
\rightarrow a_{i}$ and $a_{i}=0$ for $1\leq i\leq u$, we have: $\rho_{i}$ ($u+1\leq i\leq s$) 
lies in a finite set when $n>>0$; $\rho_{i}=1$ ($1\leq i\leq u$) when $n>>0$. Due to $H_{n}$ 
is of the form $$H_{n}=(H_{n}\cap G'')\times\{(\phi(x),x): x\in G'''\},$$ $\bigotimes_{v+1
\leq i\leq s}\rho_{i}$ is determined by $\bigotimes_{1\leq i\leq v}\rho_{i}$ up to finitely 
many possibility. Thus, each $\rho_{i}$ ($u+1\leq i\leq s$) lies in a finite set when $n>>0$. 
Using $\lim_{n\rightarrow\infty}\mathscr{D}_{G''\cap H_{n}}=\mathscr{D}_{G''\cap H}$, we can 
show that the eigenvalues stabilize and the invariant dimensions $\dim V_{\rho_{i}}^{G'\cap 
H_{n}\prod_{1\leq i\leq u} G_{i}}$ ($u+1\leq i\leq s$) stabilize to $\dim V_{\rho_{i}}^{G'\cap 
H\prod_{1\leq i\leq u} G_{i}}$. This indicates: the Laplace spectrum of $(G'/H',m')$ is larger 
than the given spectrum. Using $H_{n}\subset H$, we can show that the Laplace spectrum of 
$(G'/H',m')$ is smaller than the given spectrum. Therefore, the Laplace spectrum of $(G'/H',m')$ 
equals to the given spectrum. 
\end{proof}

\smallskip 

Motivated by the compactness conjecture of isospectral sets, we think the following statement should hold.

\begin{question}\label{Q:finiteness-normal}
Given a spectrum, can we show that there exist only finitely many normal homogeneous spaces $(G/H,m)$ up 
to isometry with Laplace spectrum equal to the given spectrum? 
\end{question}

In case $G$ and $m$ is given, this follows from Theorem \ref{T:spectral}. 

Any normal homogeneous space is of the form $M=G/H$, where $$G=T\prod_{1\leq i\leq s} G_{i}$$ with $T$ a 
torus and each $G_{i}$ ($1\leq i\leq s$) a connected and simply-connected compact simple Lie group, 
$H\cap T=1$, and $G_{i}\not\subset H$ for any $i$. Let $M=G/H$ be of the this form.

\smallskip 

In case $G$ is semisimple, as $\dim G/H$ is determined from Laplace, we can show that there are only 
finitely many possible $G$ by the condition $G_{i}\not\subset H$ for any $i$. For a fixed $G$, there 
are only finitely many possible $G/H$ by Theorem \ref{T:spectral2}. Then, Question \ref{Q:finiteness-normal} 
reduces the the following statement, which is an algebraic question and seems not hard to prove.   

\begin{question}\label{Q:finiteness-normal2}
Given a spectrum, a connected compact Lie group $G$, and a closed subgroup $H$, can we show that there exist only 
finitely many normal homogeneous spaces $(G/H,m)$ up to isometry with Laplace spectrum equal to the given spectrum?   
\end{question}

\smallskip 

In case $G$ is a torus, $H=1$ by the above assumption. In this case the statement is a theorem of Kneser. 
A simple proof is given in \cite{Wolpert}, which is based on the Mahler compactness theorem for lattices.  

\smallskip 

In case $G$ is neither semisimple nor a torus, again we have only finitely many possible $G$ by dimension 
reason. We can't show yet finitely many possibility of $G/H$ when $G$ is fixed. The difficulty is due to 
there may exist infinitely many metrics on the $T$ part which give isometric metrics on $G/H$. Probably 
a sophisticated use of Mahler compactness theorem could overcome this difficulty.

\section{Questions}\label{S:Question}

Besides some un-solved questions asked in the main body of this paper, Questions \ref{Q:tau-dimension}, \ref{Q:FR}, 
\ref{Q:finiteness-normal}, we have some more questions concerning dimension datum and $\tau$-dimension datum.  

\smallskip 

\begin{question}\label{Q:A=CD}
For subgroups $G,H_1,H_2$ as in Theorem \ref{T:tau-example}, do we have explict formula for $\dim\Hom_{H_{1}}
(\tau_{\lambda},\rho|_{H_{1}})$ ($=\dim\Hom_{H_{2}}(\tau_{\lambda'},\rho|_{H_{2}})$)? 
\end{question}

\smallskip 

\begin{question}\label{Q:asymptotics}
It is hard to calculate the $\tau$-dimension datum in general. But, do we have an asymptotic 
formula for $\dim\Hom_{H}(\tau,\rho|_{H})$?  
\end{question}

We tried to calculate dimension datum or its asymptotics from the character $F_{\Phi,\Gamma^{0}}$, 
but failed. Another way one might think is whether there is a connection of asymptotics of dimension 
datum and the asymptotics studied in \cite{Heckman}. 

\smallskip 

\begin{question}\label{Q:partial data}
Let $G$ be a connected compact Lie group, and $H_1,H_2$ be two closed subgroups. If $$\dim V_{\rho}^{H_1}
=\dim V_{\rho}^{H_2}$$ for all but finitely many $\rho\in\widehat{G}$, is then $\mathscr{D}_{H_1}=
\mathscr{D}_{H_2}$?  
\end{question}

For this question, one may first study the asymptotics of $\dim V_{\rho}^{H_{i}}$ ($i=1,2$) and 
use the asymptotical coincidence to show equality of dimension data. 

\smallskip 

\begin{question}\label{Q:elliptic}
Let $G$ be a connected compact Lie group, and $H_1,H_2$ be two connected closed subgroups. Suppose 
$(Z_{G}(H_{i}))^{0}=Z(G)^{0}$. Does $\mathscr{D}_{H_1}=\mathscr{D}_{H_2}$ imply that $H_1\sim H_2$?   
\end{question}

We call a connected closed subgroup $H$ of a connected compact Lie group $G$ an {\it elliptic subgroup}  
if $(Z_{G}(H_{i}))^{0}=Z(G)^{0}$. Note that, if $\mathscr{D}_{H_1}=\mathscr{D}_{H_2}$ and $H_1$ is and 
elliptic subgroup, then so is $H_2$. One proves this by just taking $\rho$ the adjoint module of $G$. 

Question \ref{Q:elliptic} is related to Questions asked in Arthur's paper \cite{Arthur}. We have made 
some progress along this direction. It remains to be completed. 

\smallskip 

Besides $\tau$-dimension datum and dimension data for disconnected subgroups studied in this paper, in the 
most generality we may study of equalities and linear relations of $\tau$-dimension data for disconnected 
subgroups. In this case the dimension datum is equivalent to a kind of characters which have a mixed form 
of the characters used in studying $\tau$-dimension datum or in studying dimension datum of disconnected 
subgroups. We may develop a theory for this situation in future. 

\smallskip 

Now we know that the dimension datum nearly determines the conjugacy class of a connected closed subgroup, 
but there are a few exceptions. Several mathematicians asked if there are extra information besides dimension 
datum extracted from L-function which together with dimension datum could determine the conjugacy class of the 
$H^{\pi}$ defined by Langlands (\cite{Langlands}). This is an interesting question to think.

\end{document}